\newcommand{\conv}{\mathrm{conv}}
\newtheorem{thm}{Theorem}[section]
\newtheorem{theorem}[thm]{Theorem}
\newtheorem{lemma}[thm]{Lemma}
\newtheorem{cor}[thm]{Corollary}
\newtheorem{corollary}[thm]{Corollary}
\newtheorem{proposition}[thm]{Proposition}
\newtheorem{definition}[thm]{Definition}
\newtheorem{remark}[thm]{Remark}
\newtheorem{question}{Question}
\newtheorem{example}[thm]{Example}
\newcommand{\cA}{{\mathcal A}}
\newcommand{\cB}{{\mathcal B}}
\newcommand{\cF}{{\mathcal F}}
\newcommand{\cH}{{\mathcal H}}
\newcommand{\cM}{{\mathcal M}}
\newcommand{\cP}{{\mathcal P}}
\newcommand{\cU}{{\mathcal U}}
\newcommand{\cZ}{{\mathcal Z}}
 \newcommand{\norm}[1]{\left\lVert#1\right\rVert}
\begin{document}

\title[Derivations  into noncommutative symmetric spaces]{Innerness of derivations into noncommutative symmetric spaces is determined commutatively}

\author[J. Huang]{Jinghao Huang}
\address{Institute for  Advanced Study in  Mathematics of HIT, Harbin Institute of Technology, Harbin, 150001, China}
\email{{\color{blue}jinghao.huang@hit.edu.cn}}

\author[F. Sukochev]{Fedor Sukochev}
\address{School of Mathematics and Statistics, University of New South Wales, Kensington, 2052, Australiaa}
\email{{\color{blue}f.sukochev@unsw.edu.au}}

\begin{abstract}
Let $E=E(0,\infty)$ be a symmetric function space and $E(\cM,\tau)$ be a symmetric operator space associated with a semifinite von Neumann algebra with a faithful normal semifinite trace.  Our main result identifies the class of spaces $E$ for which every
derivation $\delta:\cA\to E(\cM,\tau)$ is necessarily inner for each $C^*$-subalgebra $\cA$ in the class of all  semifinite von Neumann algebras $\cM$ as those with the Levi property.
%
%We show that for a derivation  $\delta:\cA\to E(\cM,\tau)$, where
% $\cA$ is a $C^*$-subalgebra of a semifinite von Neumann algebra and $E(\cM,\tau)$ is a noncommutative symmetric space of $\tau$-compact operators generated by a symmetric function space $E(0,\infty )$,
%   its  innerness is determined by the  commutative core $E(0,\infty )$ of $E(\cM,\tau)$.
%Precisely speaking,
%for a symmetric function space  $E(0,\infty)$ consisting  of measurable functions with vanishing rearrangements,
%derivations $\delta:\cA\to E(\cM,\tau)$ are necessarily inner for  any  semifinite von Neumann algebra (equipped with a semifinite faithful normal trace $\tau$) and any
%   $C^*$-subalgebra $\cA$ of $\cM$
%   if and only if
%   $E(0,\infty)  $ has the Fatou property.
% This
% resolves several problems considered in various papers
%  and
% provides an alternative proof for the
%  resolution of a question due to Bunce and Paschke in the 1980s
%  without involving $L$-embedded spaces as
%   Bader, Gelander and Monod, and  Pfitzner did.
\end{abstract}

\subjclass[2010]{46L57, 47A56, 46L52,  46L10, 46E30.}
\keywords{derivation;  symmetric space; measurable operator.}

\thanks{J. Huang was supported by the NNSF of China (No.12031004).
F. Sukochev's research was supported by the  Australian Research Council  (FL170100052).}
 \maketitle

\section{Introduction}
%\subsection{Historical background and motivations}

Let $\cA $ be a $C^*$-algebra and let $J$ be an $\cA$-bimodule\cite{Sinclair_S}.
A derivation $\delta :\cA \rightarrow J$ is a linear mapping satisfying $\delta (xy) = \delta(x)y + x\delta(y)$, $x,y \in \cA $.
In particular, if $a\in J$, then $\delta_a(x): = xa-ax $ is a derivation.
Such derivations implemented by elements in $J$ are said to be \emph{inner}.
One of the classical problems in operator algebra theory is the question whether every derivation from $\cA$ into $J$ is automatically inner.

The celebrated Kadison--Sakai theorem \cite{Kadison,Sakai66}
states that derivations are always inner  when $\cA$ is a von Neumann algebra and the $\cA$-bimodule $J$ coincides with the algebra $\cA$ itself.
Further, it was proved that every derivation from a von Neumann algebra into any of its ideal s is automatically inner \cite{Ber_S_1,Ber_S_2}.
However, when one considers more general $C^*$-algebras $\cA$ and $\cA$-bimodules $J$, there are  examples of non-inner derivations for some specific $\cA$ and $J$
(see e.g. \cite{Popa_R,BCS2006,Sakai98,BHLS,Elliott2,Elliott1,Elliott3}).
We  quote the following  from the memoir by Johnson \cite[Section 10.11]{Johnson}): ``\emph{It would be desirable to identify those spaces $\mathfrak{X}$ with $H^1(G, \mathfrak{X}^*)= 0$
for all $G$ }''.
A similar statement appeared in \cite{Hoover} (see also \cite[p.60]{Sinclair_S}): ``\emph{Here again it can be asked if such derivations
are inner; that is, are they induced by an element of $J$ as above? In fancier
language, the question asks if the cohomology group $H^1(\mathfrak{A},J)$ is trivial.}''
   During the past decades,   a number of important special cases have been studied (see e.g. \cite{CPSS,Pisier,Christensen_E_S,Johnson, Johnson_P,Popa,BGM,Losert,Runde,BKS22,KL1,KL2,Davidson}).
% One of the most natural cases  is
% when $J =B(\cH)$ (the algebra of all bounded linear operators on a Hilbert space $\cH$), which is one of the most famous long-standing open problems  (called the Derivation Problem) in the theory of operator algebras.
% There are  results giving  affirmative answers to  this problem  under some additional conditions on the subalgebra $\cA$  \cite{Ozawa, Pisier,Sinclair_S}.
%For those who are interested in this special case, we refer to \cite{Christensen, Christensen_E_S,CPSS,Sinclair_S, Pisier,Davidson}.

Due to the rapid development of noncommutative analysis and motivated by questions due to  Johnson et al.,
%\cite[Section 10.11]{Johnson} (see also \cite{Hoover,Sinclair_S}),
there are a number of papers concerning various versions of  the following question\cite{BP,BCS_2014,BGM,Weigt,BCLSZ}: %{\color{red}I reckon the question specific for SYMMETRIC spaces was asked by someone, maybe you, Ayupov, Chilin or others, at least at some Russian/Uzbekistan conferences. }
 \begin{question}\label{que:1}
Assume that $\cM$ is a  von Neumann algebra  equipped with a faithful normal semifinite trace $\tau$.
Let  $E(\cM,\tau)$ be a symmetric space of $\tau$-compact operators  affiliated with  $\cM$.
How can one identity those  $E(\cM,\tau)$ such that derivations from
 an arbitrary  $C^*$-subalgebra $\cA$ of $\cM$  into $E(\cM,\tau)$ are necessarily inner?
\end{question}

Experts in the operator theory are probably     more familiar with symmetrically normed ideals in $B(\cH)$, which are a special case of noncommutative symmetric spaces.
Various versions of  Question \ref{que:1}
  for derivations with values in ideals of a von Neumann algebra were asked and discussed  in \cite{Kaftal_W,BHLS,BHLS2,Johnson_P,Popa, Popa_R,Huang, Hoover}.

It was a long-standing open question whether every derivation a $C^*$-subalgebra of a semifinite von Neumann algebra $\cM$ into $\cM_*$ must be  inner (see e.g. \cite[p.247]{BP}),
 which in our setting is equivalent to the special version of Question \ref{que:1} with $E=L_1$.
This question  was resolved completely by Bader, Gelander and Monod in 2012 \cite{BGM} (see also \cite{Pfitzner} for a slightly different proof due to Pfitzner).
The method used in \cite{BGM} (or \cite{Pfitzner}) does not have any chance to deliver the full answer on Question \ref{que:1}.
This fact was emphaiszed in  \cite[Section 3]{BGM}, where the following points were raised:
      \begin{quote}
    \begin{enumerate}
      \item[a.] In marked contrast to the classical fixed point theorems, there is no hope to find a fixed point inside a general bounded closed convex subset of $L^1$ $\cdots$ the weak compactness $\cdots$ seems almost unavoidable $\cdots$
      \item[b.]
$\cdots$ a canonical norm one projection $V^{**}\to V$ is not enough.
\item[c.] It would be interesting to find a purely geometric version of the proposition $\cdots$
\end{enumerate}
\end{quote}
The fact that the ``fixed point'' obtained in \cite{BGM} is not inside a general bounded closed convex subset of $L^1$ leads to extra difficulties in the general case.

In this paper,  we completely resolve  Question \ref{que:1} above, see Theorem \ref{main} and Corollary \ref{cormain} below.
We show that the Levi property\footnote{A symmetric function space $E(0,\infty)$ having the Levi property has an equivalent symmetric norm such that $E(0,\infty)$ has the Fatou property, see Remark \ref{Levi}.
The Soviet school on
Banach lattices used the term monotone complete norm or property (B)\cite[Chapter X.4]{KA}, see also \cite[p.89]{AA}. In the theory of operator algebras,  a similar property is called `\emph{monotone closed}'\cite[Chapter III, Definition 3.13]{Tak}.}
of a symmetric space is a sufficient and necessary condition for the Question \ref{que:1} above having an affirmative answer for every  semifinite von Neumann algebra $\cM$.
   Thus, the derivation theorem for preduals of von Neumann algebras in \cite{BGM} (in the semifinite setting) becomes a trivial corollary of our Theorem \ref{main}.
   %  It is rather surprising to observe that
%Theorem \ref{main} below is (in some ideological sense) an
% interpolation type result between \cite{BGM} and \cite{BHLS}, which shows that
%  Question \ref{que:1} has an affirmative answer for
%  any symmetric space having the Fatou property
%between noncommutative the $L_1$-space $L_1(\cM,\tau)$ (affiliated with a semifinite von Neumann algebra $\cM$) and a symmetric $KB$-ideal in $\cM$.
% %Numerous results in the literature happen to become immediate corollaries of our main theorem.
% Let us   emphasize that the main results of \cite{BGM,Pfitzner} (for derivations into the  predual of a semifinite von Neumann algebra) and \cite{HLS,Kaftal_W,Ber_S,Weigt,BP,Hoover,BHLS,BHLS2} are immediate  consequences of our Theorem \ref{main}.

The new approach devised in this paper answers to
  points (a), (b) and (c) above raised in \cite{BGM}, which provides  an alternative proof for the resolution of the question raised by Bunce and Paschke~\cite{BP}, without  involving weak compactness of a subset in a $L$-embedded space  as \cite{BGM}   and  \cite{Pfitzner} did.
  This
enables us to find
a  ``fixed point'' (implementing the derivation)  from  a  not necessarily weakly compact  closed convex subset of  a noncommutative symmetric space  which is  $1$-complemented subspace of its bidual but  not necessarily an $L$-embedded Banach space (i.e., a symmetric space having the Levi property\cite{DP2012,DDST}\footnote{Indeed,
Theorem \ref{main} holds for the case when
the projection constant   is not necessarily $1$.
}),
  see  Theorem~\ref{main}.
% These seems to be nice answers to  \cite[Comments a and b]{BGM}.
On the other hand, the Levi property
 of the space $E(\cM,\tau)$ means that $E(\cM,\tau)$ coincides with its second K\"{o}the dual and this geometrical condition is the only one required in Theorem \ref{main} thus delivering
   (at least spiritually)   an answer to the question suggested  in  \cite[Comment c]{BGM} above.
 We  believe that the method   developed in this
work is of interest in its own right.

It is important to emphasize that the Fatou/Levi property was hatched in the theory of Banach lattices\cite{BVG,AA}, and was even included into the original
  definition of  Banach function spaces  over $\sigma$-finite measure spaces (see \cite{Bennett_S,Luxemburg}). The property
 is somewhat analogous to the   so-called ``dual normal'' property\footnote{Let $\cM$ be a von Neumann algebra. An $\cM$-bimodule  $X$ is said to be a dual normal $X$-bimodule if $X$ is a dual space and the maps $m\mapsto mx$ and $m\mapsto xm$ are both ultraweak-weak$^*$ continuous from $\cM$ into $X$ for each fixed element $x\in X$\cite[p.6]{Sinclair_S}. }.
The importance of the Fatou/Levi property in the theory of Banach function spaces  and symmetric operator spaces is hard to overestimate  \cite{DDP93,DP2012,DDST}.
It seems appropriate to recall here that   every derivation from a hyperfinite  von Neumann algebra $\cA$ into a dual normal $\cA$-bimodule is inner
(see e.g. \cite[Theorem 2.4.3]{Sinclair_S}, \cite{Connes} and \cite{JKR}).
Recall also, that derivations from a nuclear $C^*$-algebra $\cA$ into a dual Banach $\cA$-module are inner\cite{Haagerup,Connes78}.
  However,   Theorem~\ref{main} below
  holds for arbitrary  $C^*$-subalgebras $\cA$ of $\cM$
     and for symmetric spaces which may  not  have a predual space. % {\color{red} is the requirement to have a predual is a prt of the definition of 'dual normal $A$-bimodule? I have not checked. This definition is not given in the text and I am not fully certain what this last sentence actually means.}

%which extends all existing results in the literature, see e.g. \cite{HLS,Hoover,Kaftal_W,BHLS,Ber_S,Weigt,BP,Pfitzner,BGM}.

The  reflexive gate type result (see e.g. \cite[Corollary 3.2.3]{Novikov})
is relatively unknown but plays a significant role in our approach. %Our present work seems to be first applying it in the derivation theory.
      In Section \ref{RGT}, we establish
       a noncommutative version  of
       this result and lay the groundwork for its usage in the derivation problem.
 In Section \ref{s:M}, using the weak compactness criteria for noncommutative symmetric spaces obtained in \cite{DDP,DSS}, we show that the Ryll-Nardzewski fixed point theorem is applicable to any noncommutative strongly symmetric $KB$-space (or a Kantorovich--Banach space, see Section \ref{sub:sys}) whose bounded part does not coincides with $C_1(\cM,\tau)=L_1(\cM,\tau)\cap \cM$.
This is the key  technical step in the proof of  the main result of the present paper, Theorem \ref{main}.
In Section \ref{iffsection}, we demonstrate why the Fatou/Levi property of the space $E(\cM,\tau)$  is a   necessary condition for an affirmative answer to Question \ref{que:1} in the class of all semifinite von Neumann algebras.

%\subsection{Johnson--Parrot--Popa theorem and its semifinite verisons}

%
%\subsection{Derivations into noncommutative $L_p$-spaces and  symmetric spaces (of possibly unbounded operators)}
%
 %
%This direction of thought has been completed in \cite{HLS} where the present authors jointly with Levitina have proved that
% derivations into $L$-embedded symmetric spaces are inner,   see also  \cite{BCS_2014,BCS,BCLSZ} for results concerning derivations from a semifinite von Neumann algebra  into a symmetric space  affiliated with itself.

\section{Preliminaries}\label{prel}
In this section,
we recall some notions of the theory of noncommutative integration.

In what follows,  $\cH$ is a  Hilbert space and $B(\cH)$ is the
$*$-algebra of all bounded linear operators on $\cH$ equipped with the uniform norm $\left\|\cdot\right\|_\infty$, and
$\mathbf{1}$ is the identity operator on $\cH$.
Let $\mathcal{M}$ be
a von Neumann algebra on $\cH$.
We denote by $\cP(\cM)$ the collection  of all projections in $\cM$, by $\cM'$ the commutant of $\cM$ and by $\cZ(\cM)$ the center of $\cM$.
For details on von Neumann algebra
theory, the reader is referred to e.g.   \cite{KR1, KR2}
or \cite{Tak}. General facts concerning measurable operators may
be found in \cite{Nelson}, \cite{Se} (see also
\cite[Chapter
IX]{Ta2} and the forthcoming book \cite{DPS}).
For convenience of the reader, some of the basic
definitions are recalled.

\subsection{$\tau$-measurable operators and generalized singular value functions}

A closed, densely defined operator $x:\mathfrak{D}\left( x\right) \rightarrow \cH $ with the domain $\mathfrak{D}\left( x\right) $ is said to be {\it affiliated} with $\mathcal{M}$
if $yx\subseteq xy$ for all $y\in \mathcal{M}^{\prime }$, where $\mathcal{M}^{\prime }$ is the commutant of $\mathcal{M}$.
A  closed,
densely defined
operator $x:\mathfrak{D}\left( x\right) \rightarrow \cH $ affiliated with $\cM $ is said to be
{\it measurable}  if  there exists a
sequence $\left\{ p_n\right\}_{n=1}^{\infty}\subset \cP\left(\mathcal{M}\right)$, such
that $p_n\uparrow \mathbf{1}$, $p_n(\cH)\subseteq\mathfrak{D}\left(x\right) $
and $\mathbf{1}-p_n$ is a finite projection (with respect to $\mathcal{M}$)
for all $n$.
 The collection of all measurable
operators with respect to $\mathcal{M}$ is denoted by $S\left(
\mathcal{M} \right) $, which is a unital $\ast $-algebra
with respect to strong sums and products (denoted simply by $x+y$ and $xy$ for all $x,y\in S\left( \mathcal{M%
}\right) $).

Let $x$ be a self-adjoint operator affiliated with $\mathcal{M}$.
We denote its spectral measure by $\{e^x\}$.
 It is well known that if
$x$ is an operator affiliated with $\mathcal{M}$ with the
polar decomposition $x = u|x|$, then $u\in\mathcal{M}$ and $e\in
\mathcal{M}$ for all projections $e\in \{e^{|x|}\}$. Moreover,
$x\in S(\mathcal{M})$ if and only if  $e^{|x|}(\lambda,
\infty)$ is a finite projection for some $\lambda> 0$. It follows
immediately that in the case when $\mathcal{M}$ is a von Neumann
algebra of type $III$ or a type $I$ factor, we have
$S(\mathcal{M})= \mathcal{M}$. For type $II$ von Neumann algebras,
this is no longer true. From now on, let $\mathcal{M}$ be a
semifinite von Neumann algebra equipped with a faithful normal
semifinite trace $\tau$.

An operator $x\in S\left( \mathcal{M}\right) $ is called $\tau$-measurable if there exists a sequence
$\left\{p_n\right\}_{n=1}^{\infty}$ in $P\left(\mathcal{M}\right)$ such that
$p_n\uparrow \mathbf{1}$, $p_n(\cH)\subseteq \mathfrak{D}\left(x\right)$ and
$\tau(\mathbf{1}-p_n)<\infty $ for all $n$.
The collection $S\left( \mathcal{M}, \tau\right)
$ of all $\tau $-measurable
operators is a unital $\ast $-subalgebra of $S\left(
\mathcal{M}\right) $.
It is well known that a linear operator $x$ belongs to $S\left(
\mathcal{M}, \tau\right) $ if and only if $x\in S(\mathcal{M})$
and there exists $\lambda>0$ such that $\tau(e^{|x|}(\lambda,
\infty))<\infty$.
Alternatively, an unbounded operator $x$
affiliated with $\mathcal{M}$ is  $\tau$-measurable~(see
\cite{FK}) if and only if
$$\tau\left(e^{|x|}\bigl(n,\infty\bigr)\right)\rightarrow 0,\quad n\to\infty.$$

For convenience of the reader,  we also recall the definition of the measure topology $t_\tau$ on the algebra $S(\cM,\tau)$. For every $\varepsilon,\delta>0,$ we define the set
$$V(\varepsilon,\delta)=
\left\{
x\in S(\mathcal{M},\tau):\ \exists p\in \cP\left(\mathcal{M}\right)\mbox{ such that } \left\|x(\mathbf{1}-p)\right\|_\infty \leq\varepsilon,\ \tau(p)\leq\delta
\right\}.$$
 The topology generated by the sets $V(\varepsilon,\delta)$, $\varepsilon,\delta>0,$ is called the \emph{measure topology} $t_\tau$ on $S(\cM,\tau)$ \cite{DPS, FK, Nelson}.     It is well known that the algebra $S(\cM,\tau)$ equipped with the measure topology is a complete metrizable topological algebra.
  A sequence $\left\{x_n\right\}_{n=1}^\infty\subset S(\cM,\tau)$ converges to zero with respect to measure topology $t_\tau$ if and only if $\tau\big(e^{|x_n|}(\varepsilon,\infty)\big)\to 0$ as $n\to \infty$ for all $\varepsilon>0$ \cite{DPS}.

Another important vector topology on $S(\cM,\tau)$ is the \emph{local measure topology}.
For convenience we denote by $P_f(\cM)$ the collection of all $\tau$-finite projections in $\cM$, that is, the set of all $e\in \cP(\cM)$ satisfying $\tau(e) <\infty$.
A neighbourhood base for this topology is given by the sets $V(\varepsilon, \delta; e)$, $\varepsilon, \delta>0$, $e\in P_f(\cM)$, where $$V(\varepsilon,\delta; e) =
\left\{ x\in S(\cM,\tau)~: ~ exe \in V(\varepsilon, \delta)\right \}.$$
Obviously, local measure topology is weaker than measure topology\cite{DP2}.
We note here, that the local measure topology used in the present paper differs from the local measure topology defined in e.g. \cite{BCS,BCS_2014}.

%If $E=E(0,1)$ is a symmetric space on $(0,1),$ then
%$$L_{\infty}\subseteq E\subseteq L_1.$$
%
%If $E=E(0,\infty)$ is a symmetric space on $(0,\infty),$ then
%$$L_1\cap L_{\infty}\subseteq E\subseteq L_1+L_{\infty}.$$
%
%If $E=E(\mathbb{N})$ is a symmetric space on $\mathbb{N},$ then
%$$\ell_1\subseteq E\subseteq \ell_{\infty},$$
%where $\ell_1$ and $\ell_\infty$ are classical spaces of all
%absolutely summable and bounded sequences respectively.

\begin{definition}\label{mu}
Let $\mathcal M$ be  a von Neumann  algebra equipped
with a faithful normal semi-finite trace $\tau$ and let $x\in
S(\mathcal{M},\tau)$. The generalized singular value function $\mu(x):t\mapsto  \mu(t;x)$ of
the operator $x$ is defined by setting
$$
\mu(s;x)
=
\inf\{\left\|xp\right\|_\infty:\ p\in \cP(\cM)\mbox{ with}\ \tau(\mathbf{1}-p)\leq s\}, \forall s\in (0,\infty ).
$$
\end{definition}
An equivalent definition in terms of the
distribution function of the operator $|x|$ is the following. for every
operator $x\in S(\mathcal{M},\tau),$ setting
$d_{|x|}(t)=\tau(e^{|x|}(t,\infty)),\quad t>0,$
we have (see e.g. \cite{FK})
\begin{align}\label{dis}
\mu(t; x)=\inf\{s\geq0:\ d_{|x|}(s)\leq t\}.\end{align}

Note that $\mu(x)$ is a function defined on $(0,\infty)$ even if  the trace $\tau$ is finite. In particular,  $\mu(s;x)=0$ when $s\ge \tau({\bf 1})$.

%It is well-known \cite{LSZ,DP2} that if $a\in S(\cM,\tau)$ and $b,c\in \cM$, then
%\begin{align}\label{ineqfact}
%\mu(t;bac)\le \left\|b\right\|_\infty \left\|c\right\|_\infty \mu(t;a), ~\mu(t;a^*)=\mu(t;a).
%\end{align}
%{\color{red}In particular, $\mu(A) =\mu(|A|)$.}
%{\color{red}Suppose that $X\in S(\cM,\tau)$. If $0<\alpha \in \mathbb{R}$ and $E = E^{|X|}(\alpha,\infty)$, then
%\begin{align}\label{mu_shift}
%\mu(|X|E) = \mu(X)\chi_{[0,\tau(E))}
%\end{align}
%and
%\begin{align}\label{mu_shift2}
%\mu(t; |X|E^\perp) = \mu(t+\tau(E);X)
%\end{align}
%for all $t\ge 0$ whenever $\tau(E)<\infty$.}

Consider the algebra $\mathcal{M}=L^\infty(0,\infty)$ of all
Lebesgue measurable essentially bounded functions on $(0,\infty)$.
The algebra $\mathcal{M}$ can be seen as an abelian von Neumann
algebra acting via multiplication on the Hilbert space
$\mathcal{H}=L^2(0,\infty)$, with the trace given by integration
with respect to Lebesgue measure $m.$
It is easy to see that the
algebra of all $\tau$-measurable operators
affiliated with $\mathcal{M}$ can be identified with
the subalgebra $S(0,\infty)$ of the algebra of Lebesgue measurable functions $L_0(0,\infty)$ which consists of all functions $x$ such that
$m(\{|x|>s\})$ is finite for some $s>0$.
It should also be pointed out that the
generalized singular value function $\mu(x)$ is precisely the
decreasing rearrangement $\mu(x)$ of the function $|x|$ (see e.g. \cite{KPS}) defined by
%The notation $\mu(x)$ stands for the non-increasing right-continuous
%rearrangement of $x\in S$ given by
$$\mu(t;x)=\inf\{s\geq0:\ m(\{|x|\geq s\})\leq t\}.$$
%In the case when $x$ is a sequence we denote by $\mu(x)$ the usual
%decreasing rearrangement of the sequence $|x|$.
%Let $L_0$ be the space of Lebesgue measurable functions either on
%$(0,1)$ or on $(0,\infty)$, or on $\mathbb{N}$ finite almost
%everywhere (with identification $m-$a.e.). Here $m$ is Lebesgue
%measure or else counting measure on $\mathbb{N}$. Define $S$
%If $m$ is finite measure, then $L^0=S$.
%
%Let $E$  be a Banach space of real-valued Lebesgue measurable
%functions either on $(0,1)$ or $(0,\infty)$ (with identification
%$m-$a.e.) or on $\mathbb{N}$. The space $E$ is said to be {\it
%absolutely solid} if $x\in E$ and $|y|\leq |x|$, $y\in L_0$
%implies that $y\in E$ and $||y||_E\leq||x||_E.$
%
%The absolutely solid space $E\subseteq S$ is said to be {\it
%symmetric} if for every $x\in E$ and every $y$ the assumption
%$y^*=x^*$ implies that $y\in E$ and $||y||_E=||x||_E$ (see e.g.
%\cite{KPS}).

If $\mathcal{M}=B(\cH)$ (respectively, $\ell_\infty$) and $\tau$ is the
standard trace ${\rm Tr}$ (respectively, the counting measure on
$\mathbb{N}$), then it is not difficult to see that
$S(\mathcal{M})=S(\mathcal{M},\tau)=\mathcal{M}.$ In this case,
for $x\in S(\mathcal{M},\tau)$ we have
$$\mu(n;x)=\mu(t; x),\quad t\in[n,n+1),\quad  n\geq0.$$
The sequence $\left\{\mu(n;x)\right\}_{n\geq0}$ is just the sequence of singular values of the operator $x.$

If $x,y\in S(\cM,\tau)$, then $x$ is said to be submajorized by $y$, denoted by $x\prec\prec y$, if \begin{align*} \int_{0}^{t} \mu(s;x) ds \le \int_{0}^{t} \mu(s;y) ds  \mbox{ for all $t\ge 0$.} \end{align*}
In particular, for $x,y\in S(0,\infty)$,   $x\prec \prec y$ if and only if  $\int_{0}^{t} \mu(s;x) ds \le \int_{0}^{t} \mu(s;y) ds $, $t\ge 0$.

\subsection{Symmetric spaces}\label{sub:sys}
\begin{definition}\label{def:symmetric}
 A linear subspace $E$ of $S(\cM,\tau)$ equipped with a complete norm $\norm{\cdot}_E$, is called a symmetric space (of $\tau$-measurable operators) if $x\in S(\cM,\tau)$, $y \in E$ and $\mu(x)\le \mu(y)$ imply that $x\in E$ and $\norm{x}_E \le \norm{y}_E$.
\end{definition}

It is well-known that any symmetric space $E$ is a normed $\cM$-bimodule, that is, $axb\in E$ for any $x\in E$, $a,b\in \cM$ and $\left\|axb\right\|_E\leq \|a\|_\infty\left\|b\right\|_\infty \left\|x\right\|_E$ \cite{DP2,DPS}.
 A symmetric space $E(\cM,\tau)\subset S(\cM,\tau)$ is called \emph{strongly symmetric} if its norm $\left\|\cdot\right\|_E$ has the additional property that   $\left\|x\right\|_E \le \left\|y\right\|_E$ whenever $x,y \in E(\cM,\tau)$ satisfy $x\prec\prec y$.
In addition, if $x\in S(\cM,\tau)$, $y \in E(\cM,\tau)$ and $x\prec\prec y$ imply that $x\in E(\cM,\tau)$ and $\left\|x\right\|_E \le \left\|y\right\|_E$, then
 $E(\cM,\tau)$  is called \emph{fully symmetric space} (of $\tau$-measurable operators).

If $E\subset S(\cM,\tau)$ is a symmetric space, then the norm $\left\|\cdot\right\|_E$ is called \emph{order continuous} if $\left\|x_\alpha \right\|_E \rightarrow 0$  whenever $\{x_\alpha\}$ is a downwards directed net in $E^+$ satisfying $x_\alpha \downarrow 0.$
A symmetric space $E(\cM,\tau)$ is said to have the \emph{Fatou property} if for every upwards directed net $\{x_\beta\}$ in $E(\cM,\tau)^+$, satisfying $\sup_\beta \left\|x_\beta\right\|_E <\infty$, there exists an element $x\in E(\cM,\tau)^+$ such that $x_\beta \uparrow x$ in $E(\cM,\tau)$ and $\left\|x\right\|_E = \sup_\beta \left\|x_\beta\right\|_E$.
Examples such as Schatten--von Neumann operator ideals, Lorentz operator ideals, etc. all have symmetric norms which have the  Fatou property\cite{KPS,LT2,DPS}.
If $E$ has the Fatou property and order continuous norm, then it is said to be a \emph{KB}-space (or Kantorovich--Banach space)\cite{DP2}.

If $E(\cM,\tau)$ is a symmetric space, then the carrier projection $c_E\in \cP(\cM)$ is defined by setting
$$c_E = \bigvee\{p:p\in \cP(\cM),~p \in E(\cM,\tau)\}.$$
We remark that, replacing the von Neumann algebra $\cM$ by the reduced von Neumann algebra $\cM_{c_E}$ (note that  $ E(\cM,\tau) = c_E  E(\cM,\tau)  c_E$, see e.g. \cite[Corollary 6]{DP2}), it is often assumed that the carrier projection of $ E(\cM,\tau)$ is equal to ${\bf 1}$.
%However,  since we consider derivations from a von Neumann subalgebra into a larger von Neumann algebra in this paper,   we do not exclude the general case that $c_E\ne  {\bf 1} $.

%The so-called K\"{o}the dual is identified with an important part of the dual space.
If $E (\cM,\tau) $ is a symmetric space, then the K\"{o}the dual $E(\cM,\tau)^\times $ of $E(\cM,\tau)$ is defined by
 $$ E(\cM,\tau)^\times =\{   x\in S(\cM,\tau) : \sup_{\|y\|_E\le 1, y\in E}\tau (|xy|)   <\infty    \},$$
and for every $x\in E(\cM,\tau)^\times$, we set
$\left\|x\right\|_{E^\times} = \sup
 \left\{
 \tau(|yx|) : ~y\in E(\cM,\tau), ~\left\|y\right\|_E \le 1\right\}$ (see e.g.
\cite[Section 5.2]{DP2}, see also \cite{LSZ,DDP}).
It is well-known that $\left\|\cdot\right\|_{E^\times}$ is a norm on $E(\cM,\tau)^\times$ if and only if the carrier projection $c_E$ of $E(\cM,\tau)$ is equal to ${\bf 1}$.
In this case, for a  strongly symmetric space
 $E(\cM,\tau)$, the following statements are equivalent  \cite{DP2,DP2012}.
\begin{itemize}
  \item $E(\cM,\tau)$ has the Fatou property.
  \item $E(\cM,\tau)^{\times\times}=E(\cM,\tau)$ and $\left\|x\right\|_E=\left\|x\right\|_{E^{\times\times}}$ for all $X\in E(\cM,\tau)$.
 \item The norm closed unit ball $B_E$ of $E(\cM,\tau)$ is closed in $S(\cM,\tau)$ with respect to the local measure topology .
\end{itemize}

If $E(\cM,\tau)$ is a strongly symmetric space  with $c_E={\bf 1}$ (or a symmetric  space affiliated with a  semifinite  von Neumann algebra which is  either atomless or
atomic with all minimal projections having equal trace), then \cite[Lemma 30]{DP2} (see also \cite[Theorem IV.5.7]{DPS} and \cite{KPS})
\begin{align}\label{bidualequ}
\norm{x}_E=\norm{x}_{E^{\times\times}},~x\in (L_1\cap L_\infty )(\cM,\tau),
\end{align}
and
\begin{align}\label{largerthan1infty}
L_1\cap L_\infty (\cM,\tau)\subset E(\cM,\tau) \subset (L_1+ L_\infty) (\cM,\tau).
\end{align}

A wide class of symmetric operator spaces associated with the von Neumann algebra $\cM$ can be constructed from concrete symmetric function spaces studied extensively in e.g. \cite{KPS}.
Let $(E(0,\infty),\left\|\cdot\right\|_{E(0 ,\infty)})$ be a symmetric function space on the semi-axis $(0,\infty)$.
The pair
$$E(\cM,\tau)=\{x\in S(\cM,\tau):\mu(x)\in E(0,\infty)\},\quad \left\|x\right\|_{E(\cM,\tau)}:=\left\|\mu(x)\right\|_{E(0,\infty)}$$ is a symmetric operator space affiliated with $\cM$ with $c_E ={\bf 1}$ \cite{Kalton_S} (see also \cite{LSZ}).
For convenience, we denote $\left\|\cdot\right\|_{E(\cM,\tau)}$ by $\left\|\cdot\right\|_E$.
%Many properties of symmetric spaces, such as reflexivity, Fatou property, order continuity of the norm as well as K\"{o}the duality carry over from commutative symmetric function space $ E(0,\infty)$ to its noncommutative counterpart $ E(\cM,\tau)$ (see e.g. \cite[Theorem 53 and Theorem 54]{DP2} and \cite{DDP1992,DPS,DDST}).

Let $E(0,\infty)$ be a symmetric function space. We define a symmetric space \cite[Chapter I.3]{KPS}
$$(L_1+E) (0,\infty):=\{f\in S(0,\infty): \norm{f}_{L_1+E}:= \inf_{x=u+v, u\in L_1(0,\infty),v\in E(0,\infty ) }
\left\{
\norm{u }_1 + \norm{v}_E\right\} <\infty  \}.  $$

%{\color{red} For a von Neumann algebra $\cN$ equipped with a faithful normal finite trace, we define   $\|\cdot\|_2$ defined by $\|X\|_2=\tau(X^*X)^{1/2}$, $X\in \cN$. It is well-known  that $\|\cdot \|_2$ induces the strong operator topology on the unit ball of $\cN$. }

\subsection{The ideal  of $\tau$-compact operators}\label{sec:tauCom}

For a self-adjoint operator $x\in S(\cM,\tau)$, we denote by $s(x)$ its support.
%Every projection $p\in \cM$ has a central support which is defined as  the smallest projection in the center $\cZ(\cM)$ containing $P$ as a subprojection.
The two-sided ideal $\cF(\cM,\tau)$ in $\cM$ consisting of all elements of $\tau$-finite range is defined by setting
$$\cF(\cM,\tau)=\{x\in \cM ~:~ \tau(s(|x|)) <\infty\} .$$

The $C^*$-algebra  $C_0(\cM,\tau) $ of all $\tau$-compact bounded operators can be described as the closure in the norm $\|\cdot\|_\infty$ of the linear span of all $\tau$-finite projections \cite[Definition 2.6.8]{LSZ}.
Equivalently,  $C_0(\cM,\tau) $ the  set  of all elements $x\in \cM$ such that $\tau(E^{|x|}(\lambda,\infty))<\infty$ for every $\lambda >0$ (see e.g. \cite[Chapter II, Section 4]{DPS}).
The space $C_0(\cM,\tau)$ is associated to the ideal of essentially bounded functions vanishing at infinity (see \cite[Lemma 2.6.9]{LSZ}),
 that is,
$$C_0(\cM,\tau)   =  \left\{
 a \in S(\cM,\tau) : \mu(a)\in L_\infty(0,\infty), \ \mu(\infty;a):=\lim_{t\rightarrow \infty}\mu(t;a) =0\right\}.$$
 In particular, if $\tau$ is finite, then $\cM = C_0( \cM,\tau )$ (see e.g. \cite[Page 64]{LSZ}).
 %{\color{red} There is small inaccuracy in the preceding sentence, and perhaps in a few other places of this text. This inaccuracy is related with the fact that for finite trace $\tau$, the function $\mu(t;x)$ , $x\in L_1(M) $   is defined only for all $t\leq \tau(1)$.  One possibility to deal with this inaccuracy is to state that in the case of finite vNas we simply decree that $\mu(t;x)$ , $x\in L_1(M) $ is set to be zero. whenever $t\ge \tau(1)$. }
The space  $S_0(\cM,\tau)$ of $\tau$-compact operators is the space associated to the algebra of functions from $S(0,\infty)$ vanishing at infinity, that is,
$$S_0(\cM,\tau) = \{x\in S(\cM,\tau) :  \ \mu(\infty;x) =0\}.$$
This is a two-sided ideal in $S(\cM,\tau)$ and, clearly, $C_0(\cM,\tau) = S_0(\cM,\tau)\cap \cM$.

We denote
$$C_0 (0,\infty):=\{f\in L_\infty (0,\infty):  \mu(\infty;f)=0 \},$$
and
$$(L_1 + C_0) (0,\infty):=\{f\in L_1+ L_\infty (0,\infty):  \mu(\infty;f)=0 \}. $$

\section{A noncommutative reflexive gate type theorem}
\label{RGT}

Recall the   reflexive gate type result
for
 symmetric function spaces $E(0,1)$\cite[Corollary 3.2.3]{Novikov}, i.e., if $E(0,1)\ne L_1(0,1) $ and $E(0,1)\ne L_\infty (0,1)$, then there exist two  reflexive symmetric space $F_1(0,1)$ and $ F_2(0,1)$ such that $F_2(0,1)\subset E(0,1)\subset F_1(0,1)$.
For a  symmetric sequence space $\ell_E $ such that $\ell_E \subsetneqq c_0$,
by the well-known  construction of Davis--Figiel--Johnson--Pe{\l}czy\'{n}ski\cite{DFJP}  (see \cite[Theorem 5.37]{AB}, \cite[Chapter VI, Lemma 5.3]{Conway} or \cite[p. 255]{KM} for detailed expositions), there exists a reflexive sequence space $\ell_F\subsetneqq \ell_E$, see e.g. the proof  of \cite[Proposition 3.4]{KM}.
In the following lemma, we show that this reflexive sequence is actually symmetric, which
 is a discrete   analogue of the reflexive gate result\cite[Corollary 3.2.3]{Novikov} for symmetric function spaces on the unit interval.

\begin{lemma}\label{smallref}
Let $\ell_E$ be a symmetric sequence space which does not coincide with
$\ell_1$ (up to equivalent norms).
Then, there exists a reflexive symmetric sequence space $\ell_F$ such that $\ell_F\subset \ell_E$.
\end{lemma}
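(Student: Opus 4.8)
The plan is to leverage the classical Davis--Figiel--Johnson--Pe\l czy\'nski (DFJP) interpolation construction, applied to the inclusion $\ell_E \hookrightarrow c_0$, and then to \emph{symmetrize} the resulting reflexive space by averaging over finite permutations. First I would observe that since $\ell_E$ does not coincide (up to equivalence of norms) with $\ell_1$, its canonical basis is not equivalent to the $\ell_1$-basis, and a standard argument shows $\ell_E \subsetneq c_0$ with the inclusion map $j : \ell_E \to c_0$ bounded. Let $B = B_{\ell_E}$ be the closed unit ball of $\ell_E$, viewed as a bounded convex set in $c_0$, and form for each $n \in \bN$ the set $U_n = 2^n B + 2^{-n} B_{c_0}$, with gauge (Minkowski functional) $\|\cdot\|_n$; the DFJP space is
\[
\ell_F := \Big\{ x \in c_0 : \|x\|_F := \Big(\sum_{n=1}^{\infty} \|x\|_n^2\Big)^{1/2} < \infty \Big\},
\]
which is a Banach space continuously included in $c_0$, contains $B$ (hence $\ell_E \subset \ell_F$ with continuous inclusion — note the direction: we want $\ell_F \subset \ell_E$, so this needs care, see below), and whose closed unit ball is weakly compact, so $\ell_F$ is reflexive. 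The standard references \cite{DFJP, AB, Conway, KM} supply all of this.

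The subtlety flagged in the excerpt is the \emph{direction} of the inclusion: we want a reflexive \emph{subspace} $\ell_F \subset \ell_E$, not a superspace. The correct way to arrange this is to apply DFJP not to $\ell_E \hookrightarrow c_0$ directly but to factor the (weakly compact, since $\ell_E \subsetneq c_0$ forces order continuity and a suitable weak-compactness of $B_{\ell_E}$ in $c_0 + $ a larger space, or rather one uses the scheme from \cite[Proposition 3.4]{KM}) — concretely, one takes a symmetric sequence space $\ell_G$ with $\ell_E \subsetneq \ell_G \subseteq c_0$ (for instance one can insert a Lorentz or Marcinkiewicz space strictly between), realizes the formal identity $\ell_E \to \ell_G$ as a weakly compact operator, and runs DFJP on \emph{that} operator; the output $\ell_F$ then satisfies $\ell_F \subset \ell_E$ with $B_{\ell_F}$ weakly compact in $\ell_E$. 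Reflexivity of $\ell_F$ follows because its unit ball is weakly compact in itself. This is exactly the construction alluded to in the excerpt as ``the proof of \cite[Proposition 3.4]{KM}''.

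The genuinely new content of the lemma — and the step I expect to be the main obstacle — is showing that $\ell_F$ can be taken \emph{symmetric}, i.e. invariant under the coordinate permutations and satisfying the rearrangement-monotonicity axiom. The point is that the raw DFJP space need not be symmetric even when the input space is. My plan is to symmetrize: let $\Pi$ denote the group of finitely-supported permutations of $\bN$, acting isometrically on $\ell_E$ and on $\ell_G$, hence (by functoriality of the gauges $\|\cdot\|_n$ under isometric symmetries of $B_{\ell_E}$) isometrically on the raw DFJP space $\ell_{F_0}$. One then defines
\[
\|x\|_{\ell_F} := \sup_{\pi \in \Pi} \|\pi x\|_{\ell_{F_0}}, \qquad \ell_F := \{ x \in \ell_{F_0} : \|x\|_{\ell_F} < \infty \};
\]
this is a symmetric norm provided the supremum is finite on a dense/large enough set, which holds because the DFJP construction is already permutation-equivariant, so in fact $\|\pi x\|_{\ell_{F_0}} = \|x\|_{\ell_{F_0}}$ for all $\pi$ and $\ell_F = \ell_{F_0}$ is \emph{already} symmetric once the input data $B_{\ell_E}$, $B_{\ell_G}$ are permutation-invariant and one checks the rearrangement axiom $\mu(x) \le \mu(y) \Rightarrow \|x\|_F \le \|y\|_F$. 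That last axiom is the real work: it requires showing each gauge $\|\cdot\|_n$ of $U_n = 2^n B_{\ell_E} + 2^{-n} B_{\ell_G}$ is rearrangement-monotone, which follows from the fact that both $B_{\ell_E}$ and $B_{\ell_G}$ are \emph{solid} and rearrangement-invariant (a sum of two solid symmetric balls has a rearrangement-monotone gauge — this uses that if $\mu(x) \le \mu(y)$ then $x$ lies in the solid convex hull of permutations of $y$, via the Hardy--Littlewood--Pólya-type majorization for sequences), whence the $\ell_2$-sum $\|\cdot\|_F$ inherits rearrangement-monotonicity. Completeness and the inclusion $\ell_F \subset \ell_E$ are then routine, and reflexivity is the DFJP output; this completes the proof.
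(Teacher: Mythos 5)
Your proposal has a genuine gap at its core: the DFJP construction, as you set it up, produces a reflexive space \emph{containing} $\ell_E$, not contained in it. Given a weakly compact operator $T:X\to Y$ (here the formal identity $\ell_E\to\ell_G$ with $\ell_G\supsetneq\ell_E$), the DFJP space built from $U_n=2^nT(B_X)+2^{-n}B_Y$ sits \emph{between} $T(X)$ and $Y$; so your output satisfies $\ell_E\subset\ell_F\subset\ell_G$, which is the content of Corollary \ref{bigref} (obtained in the paper by duality), not of Lemma \ref{smallref}. Your assertion that ``the output $\ell_F$ then satisfies $\ell_F\subset\ell_E$'' is simply the wrong direction, and no amount of symmetrization repairs it. A secondary error: $\ell_E\ne\ell_1$ does \emph{not} imply $\ell_E\subsetneqq c_0$ (take $\ell_E=c_0$ or $\ell_\infty$), nor does it guarantee an intermediate $\ell_G$ with $\ell_E\subsetneqq\ell_G\subseteq c_0$, nor is the formal inclusion into such an $\ell_G$ automatically weakly compact.

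The idea you are missing is to run DFJP entirely \emph{inside} $\ell_E$, taking for the weakly compact set not the unit ball but the convex circled hull $W$ of the unit vector basis $\{e_n\}$. Since $\ell_E\ne\ell_1$, the symmetric basis $\{e_n\}$ is weakly null in $\ell_E$, so by Krein--Smulian $W$ is weakly compact in $\ell_E$; setting $U_n=2^nW+2^{-n}B_{\ell_E}$, the resulting space $\ell_F=\{x:\sum_n\|x\|_n^2<\infty\}$ is reflexive by \cite[Lemma 1]{DFJP}, contains every $e_n$ (as $\|e_n\|_n\le 2^{-n}$), and is contained in $\ell_E$ because $W\subset B_{\ell_E}$ forces $\|\cdot\|_E\lesssim\|\cdot\|_n$ for each $n$. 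Your final paragraph on symmetry is broadly in the spirit of the paper's verification --- one observes $\pi(W)=W$ and $\pi(B_{\ell_E})=B_{\ell_E}$ for every permutation $\pi$, so the gauges are permutation-invariant, and the solidity of each $\|\cdot\|_n$ is checked directly from $|x|\le|y|$ by a coordinatewise multiplier argument rather than via Hardy--Littlewood--P\'olya majorization --- but as written it rests on a construction that does not yield a subspace of $\ell_E$.
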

\begin{proof}The proof relies on the construction of Davis--Figiel--Johnson--Pe{\l}czy\'{n}ski\cite[p.313]{DFJP}.

Let $\left\{e_n\right\}_{n\ge 1}$ be the standard symmetric basis of $\ell_E$.
Since $\ell_E\ne \ell_1$, it follows that  $\left\{e_n\right\}_{n\ge 1}$ is weakly null.
By the Krein--Smulian theorem \cite[Theorem 3.42]{AB},
the convex circled\footnote{A nonempty subset $A$ of a vector space is said to circled (or balanced) whenever $x\in A$ and $0\le \lambda \le 1$ imply $\lambda x\in A)$\cite[p.134]{AB}.
A convex circled hull of $A$ is
$$\left\{ \sum_{i=1}^n \lambda_i x_i :x_i\in A  \mbox{ for each $i$ and } \sum_{i=1}^n |\lambda_i|\le 1\right\}.$$
} hull $W$ of $\left\{e_n\right\}_{n\ge 1}$ is weakly compact.

Let $U_n =2^n W +2^{-n}U$, where $U$ stands for the unit ball of $\ell_E$,  and $\norm{\cdot}_n$ denotes   the Minkowski functional of
$U_n$, i.e.,
$$\norm{x}_n:=\inf\left\{\lambda >0:x\in \lambda U_n \right\}.$$
Put $\ell_F:= \left\{x\in \ell_E :\norm{x}_F: =\left[\sum_{n=1}^\infty \norm{x}_n^2\right]^{1/2}<\infty \right\}$. By \cite[Lemma 1]{DFJP} (see also \cite[Theorem 5.37]{AB}), $\ell_F$ is reflexive.

The ideal property of $\ell_F$ was asserted in \cite[Proposition 3.4]{KM}. For the sake of completeness, we include a short proof below.
Let  $y=\left(y(k)\right)_{k\ge 1}\in \ell_F$ and $x=\left(x(k)\right)_{k\ge 1}\in \ell_\infty$ such that $|x|\le |y|$.
We have $ y\in (\norm{y}_n+\varepsilon)U_n  $ for any $\varepsilon>0$, i.e.,
$$y= \sum_{k=1}^N  s_k e_k
+ y_{n,\varepsilon},   $$
  where $\sum_{k=1}^N |s_{k}|\le 2^n \left(
\norm{y}_n+\varepsilon
\right),~1\le N<\infty $   and $\left( y_{n,\varepsilon }(k)\right)_{k\ge 1}\in \frac{\norm{y}_n+\varepsilon}{2^n}U .$
Note that
$$x=
\sum_{y_k\ne 0}  s_k \frac{x_k}{y_k} e_k
 + \left (\frac{x_k}{y_k}  y_{n,\varepsilon}(k) \right)_{k\ge 1} ,  $$
$\sum_{y_k\ne 0}  \left|s_k\frac{x_k}{y_k}\right|\le  \sum_{y_k\ne 0}   |s_{k}|\le 2^n \left (\norm{y}_n+\varepsilon\right) $ and
$\left(
\frac{x_k}{y_k}  y_{n,\varepsilon}(k)
\right)_{k\ge 1}\in \frac{\norm{y}_n+\varepsilon}{2^n}U $.
Therefore, $\norm{x}_n\le \norm{y}_n$ for every $n\ge 1$, and  $x\in \ell_F$ with
  with $\norm{x}_F\le \norm{y}_F$.

To show that
 $\ell_F$ is symmetric, one only need to observe that for  any permutation $\pi$  and $n\ge 1$, we have
 $\pi(W)=W$ and $\pi(U)=U$.
 Therefore,
$\norm{\left( y(k) \right)_{k\ge 1}}_n =\norm{\left( y\left(\pi(k)\right)\right)_{k\ge 1}  }_n  $ for any $y\in \ell_F$ and  any permutation $\pi$.
%Indeed, assume by contradiction that there exists $n\ge 1$,  $y\in \ell_F$
%and  permutation $\pi$ such that
%$$\inf\left\{\lambda >0:y\in \lambda U_n \right\}\ne
%\inf\left\{\lambda >0:\left( y(\pi(k)))\right)_{k\ge 1}  \in \lambda U_n \right\}.$$
%
\end{proof}
\begin{cor}\label{bigref}
Let $\ell_E$ be a symmetric sequence  space which does not coincide with $c_0$ and $\ell_\infty$ (up to equivalent norms).
Then, there exists a reflexive symmetric sequence space $\ell_F$ such that $\ell_F\supset \ell_E$.
\end{cor}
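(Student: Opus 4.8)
The strategy is to dualise Lemma~\ref{smallref}: K\"othe duality reverses inclusions of symmetric sequence spaces and carries reflexive ones to reflexive ones, so it suffices to find a reflexive symmetric sequence space sitting \emph{inside} $\ell_E^\times$ and then take its K\"othe dual. To invoke Lemma~\ref{smallref} for $\ell_E^\times$ I must first check that $\ell_E^\times\ne\ell_1$ up to equivalent norms; this is the one step with genuine content, the remainder being formal.

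First I would record an elementary dichotomy: a nonzero symmetric sequence space $\ell_G$ is either contained in $c_0$ or equal to $\ell_\infty$. Indeed, if some $x\in\ell_G$ has $\mu(\infty;x)=c>0$, then $\mu(c\mathbf 1)=c\mathbf 1\le\mu(x)$, hence $\mathbf 1\in\ell_G$ and $\ell_G=\ell_\infty$. Applied to $\ell_E$, the hypothesis $\ell_E\ne\ell_\infty$ forces $\ell_E\subseteq c_0$, and then $\ell_E\ne c_0$ gives $\ell_E\subsetneq c_0$. Next I would show $\ell_E^\times\ne\ell_1$. Suppose not; then (closed graph theorem) $\norm{\cdot}_{E^\times}$ is equivalent to $\norm{\cdot}_1$, so the fundamental function $\phi_{E^\times}(n)=\norm{\sum_{k\le n}e_k}_{E^\times}$ satisfies $\phi_{E^\times}(n)\asymp n$; by the standard duality of fundamental functions $\phi_E(n)\phi_{E^\times}(n)\asymp n$ (see e.g.\ \cite{KPS}), the fundamental function $\phi_E$ of $\ell_E$ is therefore bounded. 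But a symmetric sequence space with bounded fundamental function has $\norm{\cdot}_E\asymp\norm{\cdot}_\infty$ on finitely supported sequences (for $x$ supported on $m$ coordinates, $\mu(x)\le\norm{x}_\infty\sum_{k\le m}e_k$, while $\norm{e_1}_E\norm{x}_\infty\le\norm{x}_E$ always), so $\ell_E$, being complete, contains $c_0$ with an equivalent norm; combined with the dichotomy and $\ell_E\subseteq c_0$ this forces $\ell_E=c_0$ or $\ell_\infty$ up to equivalent norms, contradicting the hypothesis. Hence $\ell_E^\times\ne\ell_1$ up to equivalent norms.

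Now I would apply Lemma~\ref{smallref} to $\ell_E^\times$, obtaining a reflexive symmetric sequence space $\ell_G$ with a continuous inclusion $\ell_G\subseteq\ell_E^\times$ (inclusions of symmetric sequence spaces are automatically continuous by the closed graph theorem). Set $\ell_F:=\ell_G^\times$. Since K\"othe duality reverses inclusions, $\ell_E\subseteq\ell_E^{\times\times}\subseteq\ell_G^\times=\ell_F$ continuously, which is the claimed inclusion; and $\ell_F$ is symmetric, being a K\"othe dual. Finally $\ell_F$ is reflexive: $\ell_G$ reflexive has order continuous norm, so its Banach dual coincides with its K\"othe dual, $\ell_G^\ast=\ell_G^\times=\ell_F$, and the Banach dual of a reflexive space is reflexive (equivalently, $\ell_G$ and $\ell_G^\times=\ell_F$ both have order continuous norm, which for a symmetric sequence space is exactly reflexivity of $\ell_F$). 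The only non-routine ingredient is the verification $\ell_E^\times\ne\ell_1$ in the second paragraph --- i.e.\ that the reflexive-gate hypothesis is inherited by the K\"othe dual; everything else is a direct transfer through Lemma~\ref{smallref} together with standard facts about K\"othe duality and order continuity.
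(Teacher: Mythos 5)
Your proof is correct and follows essentially the same route as the paper: verify $\ell_E^\times\ne\ell_1$, apply Lemma~\ref{smallref} to $\ell_E^\times$ to get a reflexive $\ell_G\subset\ell_E^\times$, and set $\ell_F:=\ell_G^\times$, using $\ell_E\subseteq\ell_E^{\times\times}\subseteq\ell_G^\times$. The only difference is that the paper simply cites \cite{HPS} and \cite{KM00} for the fact that $\ell_E\ne c_0,\ell_\infty$ implies $\ell_E^\times\ne\ell_1$, whereas you supply a correct self-contained fundamental-function argument for it.
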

\begin{proof}
Since $\ell_E\ne c_0,\ell_\infty$, it follows that $\ell_E^\times \ne \ell_1$ (see e.g.
\cite[Proposition 11]{HPS} and \cite{KM00}).
By Lemma~\ref{smallref}, we obtain that  there exists a reflexive symmetric sequence space  $\ell_G\subset \ell_E^{\times}$.
Therefore, letting $\ell_F: =\ell_G^\times$, which is a reflexive symmetric sequence space\cite[Section 8.3]{DP2} (see also \cite{DPS,MN,KPS}), we have
$$\ell_F=\ell_G^\times \supset \ell_E^{\times \times }\supset \ell_E,$$
which completes the proof.
\end{proof}

Now, we prove the main theorem of this section, which is an infinite measure version of \cite[Corollary 3.2.3]{Novikov}.
 \begin{theorem}\label{refgate}
 Let $E(0,\infty)$ be a symmetric function space such that
 $$E(0,\infty)\cap L_\infty (0,\infty )\subsetneqq  C_0 (0,\infty). $$
 Then, there exists a symmetric $KB$-function space such that $F(0,\infty)$ containing $E(0,\infty)$.
 If, in addition,  $E(0,1): =\left\{f\in  L_1(0,1): g(x): =\begin{cases}
    f(x), & \text{if $x\in (0,1)$};\\
    0, & \text{otherwise}
  \end{cases}   \in  E(0,\infty) , \norm{f}_{E(0,1)}=\norm{g}_{E(0,\infty)} \right\}$ satisfies that
  $$E(0,1)\ne L_1(0,1), $$
 then  $F(0,\infty)$   can be chosen as a reflexive symmetric space.
 \end{theorem}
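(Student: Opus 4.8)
The plan is to split the problem along the decomposition of $(0,\infty)$ into $(0,1)$ and $[1,\infty)$, reducing everything to two ``one‑dimensional gates''—the commutative reflexive gate theorem \cite[Corollary 3.2.3]{Novikov} on the unit interval, and the sequence‑space results Lemma~\ref{smallref} and Corollary~\ref{bigref}—and then to glue the two enlargements back into a symmetric function space on $(0,\infty)$.

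First I would record the standard decomposition of a symmetric function space $E(0,\infty)$ into the pair $(E(0,1),\ell_E)$, where $E(0,1)$ is the restriction space appearing in the statement and $\ell_E$ is the symmetric sequence space of those $(b_n)_{n\ge1}$ with $\sum_{n\ge1}b_n^*\chi_{[n-1,n)}\in E(0,\infty)$: writing $a_n(f):=\int_{n-1}^n\mu(s;f)\,ds$, one checks—using the solidity of $E(0,\infty)$ and the elementary pointwise comparisons between $\mu(f)$, its restriction $\mu(f)\chi_{(0,1)}$, and the step functions built from $a_n(f)$—that $f\in E(0,\infty)$ if and only if $\mu(f)\chi_{(0,1)}\in E(0,1)$ and $(a_n(f))_{n\ge1}\in\ell_E$, with equivalence of the corresponding norms; moreover always $L_\infty(0,1)\subset E(0,1)\subset L_1(0,1)$ and $\ell_1\subset\ell_E\subset\ell_\infty$. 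Under this correspondence $C_0(0,\infty)$ is exactly the space with interval part $L_\infty(0,1)$ and sequence part $c_0$, while $E(0,\infty)\cap L_\infty(0,\infty)$ has interval part $L_\infty(0,1)$ and sequence part $\ell_E$; hence the hypothesis $E(0,\infty)\cap L_\infty(0,\infty)\subsetneqq C_0(0,\infty)$ translates into: $\ell_E$ coincides (up to equivalent norms) with neither $c_0$ nor $\ell_\infty$ — the inclusion forces $\ell_E\subset c_0$ (equivalently $\mathbf1\notin E$), and its strictness forces $\ell_E\ne c_0$.

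Next I would apply Corollary~\ref{bigref} to obtain a reflexive symmetric sequence space $\ell_G$ with $\ell_E\subset\ell_G$; being reflexive it has order continuous norm and the Fatou property, hence is fully symmetric and a $KB$-space. Given any strongly symmetric function space $A(0,1)$ with $L_\infty(0,1)\subset A(0,1)\subset L_1(0,1)$, I would form the glued space
\[
\mathscr Z(A,\ell_G):=\{f\in S(0,\infty):\ \mu(f)\chi_{(0,1)}\in A(0,1),\ (a_n(f))_{n\ge1}\in\ell_G\},
\]
normed by $\|\mu(f)\chi_{(0,1)}\|_{A(0,1)}+\|(a_n(f))_{n\ge1}\|_{\ell_G}$. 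Using $\mu(x+y)\prec\prec\mu(x)+\mu(y)$—restricted to $(0,1)$ for the first summand, and in the form $\sum_{n\le N}a_n(x+y)=\int_0^N\mu(x+y)\le\int_0^N(\mu(x)+\mu(y))=\sum_{n\le N}(a_n(x)+a_n(y))$ for the second—together with full symmetry of $A(0,1)$ and of $\ell_G$, one checks that this is a complete symmetric norm whose interval part is $A(0,1)$ and whose sequence part is $\ell_G$; in particular $\mathscr Z(A,\ell_G)\supset E(0,\infty)$ as soon as $A(0,1)\supset E(0,1)$. For the first assertion I take $A(0,1)=L_1(0,1)$ (a $KB$-space automatically containing $E(0,1)$) and put $F(0,\infty):=\mathscr Z(L_1(0,1),\ell_G)$; since $\ell_G\subset c_0$ every $f\in F$ has $\mu(\infty;f)=0$, and running monotone/dominated convergence separately on the interval component (in $L_1(0,1)$) and on the sequence component (in $\ell_G$) shows that $F(0,\infty)$ inherits order continuity of the norm and the Fatou property, i.e. $F(0,\infty)$ is a $KB$-space. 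For the second assertion I add the hypothesis $E(0,1)\ne L_1(0,1)$ and choose a reflexive $A(0,1)=F_0(0,1)\supset E(0,1)$: if $E(0,1)\ne L_\infty(0,1)$ this is the space provided by \cite[Corollary 3.2.3]{Novikov}, and if $E(0,1)=L_\infty(0,1)$ I simply take $F_0(0,1)=L_2(0,1)$. Setting $F(0,\infty):=\mathscr Z(F_0(0,1),\ell_G)\supset E(0,\infty)$, I note that $F_0(0,1)$ and $\ell_G$ are reflexive, hence they and their K\"othe duals all have order continuous norm; since K\"othe duality respects the decomposition—the K\"othe dual of $\mathscr Z(F_0(0,1),\ell_G)$ is $\mathscr Z(F_0(0,1)^\times,\ell_G^\times)$—the previous argument applied to $F(0,\infty)$ and to $F(0,\infty)^\times$ shows both have order continuous norm, which with the Fatou property yields that $F(0,\infty)$ is reflexive.

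The main obstacle is the legitimacy and inheritance properties of the gluing construction $\mathscr Z(\cdot,\cdot)$: one must verify that pasting an interval space and a symmetric sequence space really produces a complete symmetric function space on $(0,\infty)$, and that order continuity, the Fatou property, reflexivity and K\"othe duality all pass coordinatewise through the decomposition of the second paragraph. The crucial device making this work is passing, via Corollary~\ref{bigref}, from $\ell_E$ to a reflexive—hence fully symmetric—space $\ell_G$, since full symmetry of the building blocks is exactly what forces the glued norm to satisfy the triangle inequality; the remaining bookkeeping (for instance the comparison $a_{n+1}(f)\le\mu(n;f)\le a_n(f)$ used to identify the sequence part, and the reduction of order continuity for nets to sequences on the $\sigma$-finite semi-axis) is routine.
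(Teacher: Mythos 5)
Your proposal is correct and follows essentially the same route as the paper: pass to the sequence part $\ell_E\subsetneqq c_0$, enlarge it to a reflexive $\ell_G$ via Corollary~\ref{bigref}, handle the interval part with Novikov's reflexive gate (or $L_2(0,1)$ when $E(0,1)=L_\infty(0,1)$), and conclude reflexivity from order continuity of the glued space and of its K\"othe dual. Your explicit gluing $\mathscr Z(A,\ell_G)$ is, up to equivalent norms, exactly the paper's $G(0,\infty)\cap F_1(0,\infty)$, where $F_1$ is defined through the sequence $\bigl(\int_{n-1}^{n}\mu(t;f)\,dt\bigr)_{n\ge 1}\in\ell_G$.
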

\begin{proof}

Let $\ell_E$ be the symmetric sequence space generated by $E(0,\infty)$ by setting
\begin{align}\label{definition:le}\ell_E :=\left
\{(\alpha_n)_{n\ge 1}: \sum_{n\ge 1} \alpha_n \chi_{(n-1,n]} \in E(0,\infty ) \right \}, ~\norm{(\alpha_n)_{n\ge 1}}_{\ell_E}  :=\norm{ \sum_{n\ge 1} \alpha_n \chi_{(n-1,n]} }_{E(0,\infty)}.  \end{align}
By condition $ E(0,\infty)\cap L_\infty (0,\infty ) \subsetneqq  C_0 (0,\infty)$, we obtain  that $\ell_E \subsetneqq c_0$.
Indeed,  $E(0,\infty)\cap L_\infty (0,\infty ) \subsetneqq  C_0 (0,\infty)$ implies that there exists $z=\mu(z)\in C_0(0,\infty)   $ such that
$\mu(z)\notin E(0,\infty)\cap L_\infty (0,\infty )  $, and therefore, by \eqref{definition:le}, we obtain that $\left(\mu(n;z)\right)_{n\ge 1} \in c_0$ but $\left(\mu(n;z)\right)_{n\ge 1} \notin \ell_E $.

By Corollary \ref{bigref}, there exists a reflexive symmetric sequence space
$\ell_G\supset \ell_E$.
Observe that $\ell_G$ is fully symmetric\cite{LT2,DPS,KPS}.

Now, we define a   symmetric function space $F_1(0,\infty)$
by setting
\begin{align}\label{fromsequencetofunction}
F_1(0,\infty):=\left\{
f\in (L_1+L_\infty ) (0,\infty ):   \left(
\int_{n-1 }^{n }\mu(t;f ) dt \right)_{n\ge 1}\in \ell_G \right\}
\end{align}
equipped with the norm\cite[Theorem 3.6.6]{LSZ}%\footnote{Indeed, let $\cE$ be the conditional expectation from $(L_1+L_\infty)(0,\infty)$ onto $ \ell_\infty ( \{ \chi_{(n,n+1]}\}_{n\ge 0}) $. We have
%$\norm{\cE(f+g)}_{\ell_G} \le \norm{\cE(f )}_{\ell_G} +\norm{\cE( g)}_{\ell_G}$ for all $f,g\in F_1(0,\infty )$. }
$$ \norm{f}_{F_1(0,\infty )} :=% \norm{\mu(f)\chi_{(0,1]}}_{L_1(0,1)} +
\norm{
\left(    \int_{n-1}^{n }\mu(t;f )dt    \right)_{n\ge 1}
  }_{\ell_G} , f\in F_1(0,\infty),  $$
which also has the Fatou property and order continuous norm, i.e., $F_1(0,\infty)$ is a symmetric $KB$-space.
Indeed, let $\left\{x_\beta \right\}$ be an upwards directed net in $F_1(0,\infty)^+$ satisfying $\sup_\beta \norm{x_\beta}_{F_1(0,\infty)}<\infty $.
By the definition of $F_1(0,\infty)$,  $\left( \left(    \int_{n-1}^{n }\mu(t;x_\beta )dt    \right)_{n\ge 1}\right)_\beta$
is an upwards directed net in $\ell_G^+$ with $\sup_\beta \norm{\left(    \int_{n-1}^{n }\mu(t;x_\beta )dt    \right)_{n\ge 1} }_{\ell_G}<\infty .$
Let $ x\in S(0,\infty)$ be such that $x_\beta\uparrow x$. Then, we have $\mu(x_\beta)\uparrow x$ and $ \int_{n-1}^{n }\mu(t;x_\beta )dt \uparrow  \int_{n-1}^{n }\mu(t;x )dt $ for each $n\ge 1$. By the Fatou property of $\ell_G$, we obtain that $\left(    \int_{n-1}^{n }\mu(t;x )dt    \right)_{n\ge 1}\in \ell_G$ with   $\norm{\left(    \int_{n-1}^{n }\mu(t;x )dt    \right)_{n\ge 1} }_{\ell_G}=\sup_\beta \norm{\left(    \int_{n-1}^{n }\mu(t;x_\beta )dt    \right)_{n\ge 1} }_{\ell_G} $, i.e., $x\in F_1(0,\infty)$ with $\norm{x}_{F_1(0,\infty)}= \sup_\beta \norm{x_\beta}_{F_1(0,\infty)}=\sup_\beta \norm{x_\beta}_{F_1(0,\infty)}$.
Therefore, $F_1(0,\infty)$ has the Fatou property.
The order continuity of $\norm{\cdot}_{F_1(0,\infty)}$ can be proved by a similar argument.

Note that for any $f=\mu(f)\in E(0,\infty)$, we have
$$
\sum_{n\ge 1} \mu(n; f)\chi_{(n-1,n]}  \le \mu(f) ~\mbox{ and }~ \sum_{n\ge 1} \mu(n; f)\chi_{(n-1,n]} \in E(0,\infty ).
$$
By \eqref{definition:le}, we have
 $\left( \mu(n; f) \right)_{n\ge 1}\in \ell_E$.
 Therefore,
 $$   \left(\int_{n-1 }^{n   }\mu(t;f)dt \right)_{n \ge 2 }=
  \left(\int_{n}^{n+1  }\mu(t;f)dt \right)_{n \ge 1}
    \le
    \left(
 \mu(n;f)
 \right)_{n\ge 1}  \in \ell_E \subset \ell_G.
 $$
Therefore, $\left(
\int_{n-1 }^{n }\mu(t;f)dt \right)_{n\ge 1}\in \ell_G $, i.e., $$ f\stackrel{\eqref{fromsequencetofunction}}{\in} F_1(0,\infty).  $$
That is, $E(0,\infty)\subset F_1(0,\infty)$.
The proof for the  first assertion is complete by defining $F(0,\infty):=F_1(0,\infty)$.

Assume that  $E(0,1)\ne L_1(0,1) $.
If $E(0,1)= L_\infty (0,1)$, then, defining $G(0,1):=L_2(0,1)$ (which is a reflexive symmetric function space), we have    $G(0,1)\supset E(0,1)$.
If $E(0,1)\ne L_\infty (0,1)$, then, by \cite[Corollary 3.2.3]{Novikov},
  there exists a reflexive symmetric function space $G(0,1)$ such that $E(0,1)\subset G(0,1)$.
We  define $$G(0,\infty):=
\left\{
f\in (L_1+L_\infty)(0,\infty): \mu(f)\chi_{(0,1)}\in G(0,1)
\right\} $$
 equipped with the norm\footnote{
 Let $f,g\in G(0,\infty)$.
 By \cite[Corollary 7.6]{Bennett_S}, there exists $A\subset (0,\infty)$ such that
 $\mu( (f+g)\chi_{A} ) = \mu(f+g)\chi_{(0,1)}$.  Therefore, we have $\norm{\mu(f+g)\chi_{(0,1)}}_{G(0,1)} =\norm{(f+g)\chi_{A} }_{G(0,1)}\le \norm{f\chi_{A} }_{G(0,1)}+\norm{g\chi_{A} }_{G(0,1)} \le  \norm{f\chi_{(0,1)} }_{G(0,1)}+\norm{g\chi_{(0,1)} }_{G(0,1)} $.
 Hence, $\norm{\cdot}_{G(0,1)}$ generates a symmetric norm on $G(0,\infty)$. Since $\norm{\cdot}_\infty$ is a symmetric norm, it follows $\norm{\cdot}_G$ is a complete symmetric norm on $G(0,\infty)$, see e.g. \cite[Chapter I.3]{KPS}.
 }
$$\norm{f}_G  :=
  \inf_{f=u+v , ~u\in G(0,\infty ),v\in  L_\infty  (0,\infty)}\left\{
\norm{\mu(u) \chi_{(0,1)} }_{G(0,1)} +\norm{v}_\infty\right\}  ,~f\in G(0,\infty). $$
By the definition of a K\"{o}the dual (see e.g. \cite[(4.31)]{KPS}), it is readily verified that
  the K\"{o}the dual $G(0,\infty)^\times $ of $G(0,\infty)$ is given by
\begin{align*}
&~\quad   \left\{
f\in (L_1+L_\infty)(0,\infty): \norm{\mu(f)\chi_{(0,1)}}_{G(0,1) ^{\times}}<\infty
\right\} \cap L_1(0,\infty )  \\
& =
\left\{
f\in (L_1+L_\infty)(0,\infty): \mu(f)\chi_{(0,1)}\in G(0,1) ^\times  \mbox{ and } \mu(f)\chi_{[1,\infty)}\in L_1  (0,\infty )
\right\}.
\end{align*}
 equipped with the norm
$$ \norm{f}_{G(0,\infty)^\times }:= \max\left\{
 \norm{\mu(f)\chi_{(0,1)}}_{G(0,1) ^{\times}}, \norm{f}_1 \right\},~f\in G(0,\infty)^\times
  .  $$
For any decreasing net $S_0(0,\infty)\supset E(0,\infty)\ni f_i\downarrow 0$, we have  $\mu(f_i)\downarrow_i 0$ (see e.g \cite[Proposition 2(iv)]{DP2}) and therefore, there exists a constant $c(G)$ depending on $G(0,1)$ only\cite[Chapter I, Theorem 1.8]{Bennett_S} such that \begin{align}\label{eqfo}
\norm{f_i}_{G(0,\infty )^\times } \le c(G)\left( \norm{\mu(f_i)\chi_{(0,1)}}_{G(0,1)^{\times}} +\norm{\mu(f_i) }_{L_1(0,\infty )} \right) \downarrow_i 0,
\end{align}
which shows that $G(0,\infty )^\times$ has an  order continuous norm.

Now, we define $$F(0,\infty ) := G(0,\infty )\cap F_1(0,\infty ) ,$$
equipped with $\norm{\cdot}_F:=\max\left\{
\norm{\cdot}_G,\norm{\cdot}_{F_1}
\right\}$.
 Since both $G(0,\infty)$ and $F_1(0,\infty)$ have the Fatou property, it follows that $F(0,\infty ) $ has the Fatou property. The same reasoning as in  \eqref{eqfo} shows that  $\norm{\cdot}_{F}$ is  order continuous.
Observe that\cite[Section 2.3]{SS14}  (see also \cite{Loz})
 $$F(0,\infty)^\times = G(0,\infty)^\times + F_1  (0,\infty)^\times $$
 is  equipped with the norm
  $$ \norm{f}_{F  (0,\infty)^\times} :=
  \inf_{f=u+v , ~u\in G(0,\infty)^\times ,v\in  F_1  (0,\infty)^\times}\left\{
  \norm{ u }_{G ^\times} +\norm{v}_{F_1 ^\times}\right\}.$$
  We claim that $\norm{\cdot}_{F(0,\infty)^{\times}}$
 is order continuous.
 Indeed,  for any decreasing net $F_1(0,\infty)\ni f_i\downarrow 0$, we have $\mu(f_i)\downarrow_i 0$. In particular,  we have
 $\mu(n;f_i)\downarrow _i 0$ for each $n\ge 1$.  By the triangle inequality, we have
 $$\norm{f_i }_{F  (0,\infty)^\times}\le
  \norm{ \mu(f_i)\chi_{(0,1)} }_{G(0,\infty) ^\times} +\norm{ \mu(f_i)\chi_{[1,\infty)}}_{F_1 (0,\infty) ^\times}.$$
By \eqref{eqfo}, it suffices to prove that $\norm{ \mu(f_i)\chi_{[1,\infty)}}_{F_1 (0,\infty) ^\times}\to_i 0$. By the definition of $\norm{\cdot}_{F_1(0,\infty)}$, we have \begin{align*}
 \norm{\mu(f_i)\chi_{[1,\infty )}  }_{F_1(0,\infty)^\times }&
\stackrel{\tiny\mbox{\cite[Prop. 26]{DP2}}}{=}
\sup_{\norm{x}_{F_1(0,\infty )} =1}\int_0^\infty \mu(t; \mu(f_i)\chi_{[1,\infty )} )\mu(t;x)dt\\
&\qquad =\qquad
\sup_{\norm{x}_{F_1(0,\infty )} =1}\sum_{n\ge 1} \int_{n-1}^n \mu(t; \mu(f_i)\chi_{[1,\infty )}  ) \mu(t;x)dt
\end{align*}
and
\begin{align*}  \sum_{n\ge 1} \mu(n +1 ; f_i) \int_{n-1}^n  \mu(t;x)dt& =
 \sum_{n\ge 1} \mu(n ;\mu( f_i)\chi_{[1,\infty ) }) \int_{n-1}^n  \mu(t;x)dt\\
 & \le
 \sum_{n\ge 1} \int_{n-1}^n \mu(t; \mu(f_i)\chi_{[1,\infty )} ) \mu(t;x)dt\\
 &
 \le  \sum_{n\ge1} \mu(n-1 ; \mu(f_i)\chi_{[1,\infty ) } ) \int_{n-1}^n  \mu(t;x)dt=\sum_{n\ge 1} \mu(n   ; f_i) \int_{n-1}^n  \mu(t;x)dt.
\end{align*}
By the definition of $F_1(0,\infty)$, we have  $(\mu(n;f_i))_{n\ge 1}\in \ell_G^\times $ and
$$\norm{\mu(f_i)\chi_{[1,\infty )}  }_{F_1(0,\infty)^\times } \le \norm{\{\mu(n;f_i)\}_{n\ge 1} }_{\ell_G^\times }\downarrow_i 0.$$

By \cite[Theorem 1.c.5]{LT2} (see also \cite[Theorem 2.4.14]{MN} or \cite{KPS,DPS}),  $F(0,\infty)$ is reflexive.
\end{proof}

The following lemma is folklore,  it provides a characterization for symmetric spaces whose `tail' is a proper subspace of $C_0(\cM,\tau)$. For the sake of completeness, we include the full proof.
\begin{lemma} \label{bidualC0}
Assume that
\begin{enumerate}
  \item $E(\cM,\tau)$ is  a strongly symmetric space affiliated with a von Neumann algebra equipped with a semifinite infinite faithful normal trace $\tau$,  with $c_E={\bf 1}$;
  \item $\cM$ is an atomless von Neumann algebra equipped with a  semifinite infinite faithful normal trace $\tau$ or $\cM$ is atomic equipped with a  semifinite infinite faithful normal trace $\tau$ such that  all minimal projections having equal trace.
\end{enumerate}
Then, $E(\cM,\tau)^{\times \times}\subset S_0(\cM,\tau)$
if and only if
$ E(\cM,\tau)\cap \cM  \subsetneqq   C_0 (
 \cM,\tau )  $.
\end{lemma}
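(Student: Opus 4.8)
The statement is an ``iff'' about when the K\"othe bidual $E(\cM,\tau)^{\times\times}$ sits inside the $\tau$-compact ideal $S_0(\cM,\tau)$, characterized by the commutative-looking condition $E(\cM,\tau)\cap\cM \subsetneqq C_0(\cM,\tau)$. The first move is to reduce everything to the function space level: under hypothesis (2), a symmetric operator space is determined by its generalized singular value functions, so $E(\cM,\tau)=E(0,\infty)(\cM,\tau)$ for a symmetric function space $E(0,\infty)$, and $E(\cM,\tau)^{\times\times}$ corresponds to $E(0,\infty)^{\times\times}$ (using that $c_E={\bf 1}$ and \eqref{bidualequ}--\eqref{largerthan1infty}). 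Likewise $S_0(\cM,\tau)$ corresponds to functions vanishing at infinity, and $\cM$ corresponds to $L_\infty(0,\infty)$, with $C_0(\cM,\tau)\leftrightarrow C_0(0,\infty)$. So it suffices to prove: $E(0,\infty)^{\times\times}\subset S_0(0,\infty)$ iff $E(0,\infty)\cap L_\infty(0,\infty)\subsetneqq C_0(0,\infty)$.

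\medskip

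\textbf{The easy direction.} Suppose $E(0,\infty)^{\times\times}\subset S_0(0,\infty)$. Since $E(0,\infty)\subset E(0,\infty)^{\times\times}$ always holds (with the K\"othe dual norm dominated by the original, by the basic duality of \cite[Section 5.2]{DP2}), we get $E(0,\infty)\cap L_\infty\subset S_0(0,\infty)\cap L_\infty = C_0(0,\infty)$. The inclusion is proper: if it were an equality, then the constant-on-$(0,n]$ truncations of $\chi_{(0,\infty)}$, namely $\chi_{(0,n]}$, lie in $C_0(0,\infty)=E(0,\infty)\cap L_\infty$ with uniformly bounded $E$-norm (they increase to an element bounded by something in $E$? no --- rather one checks directly), and then the Fatou-type behaviour of $E^{\times\times}$ forces $\chi_{(0,\infty)}\in E(0,\infty)^{\times\times}$, contradicting $E^{\times\times}\subset S_0$ since $\mu(\infty;\chi_{(0,\infty)})=1\ne 0$. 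The actual argument I would run: pick $f\in C_0(0,\infty)\setminus E(0,\infty)$ if the inclusion were not proper this is empty, so instead argue that $C_0(0,\infty)$ is not contained in any $E^{\times\times}$ that lies in $S_0$ unless properly --- the cleanest route is the contrapositive of the hard direction, so I will simply deduce the easy direction from it: if $E\cap\cM = C_0$ (equality), I will show $E^{\times\times}\not\subset S_0$ by exhibiting a non-$\tau$-compact element of $E^{\times\times}$.

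\medskip

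\textbf{The hard direction (main obstacle).} Assume $E(0,\infty)\cap L_\infty(0,\infty)\subsetneqq C_0(0,\infty)$; we must show $E(0,\infty)^{\times\times}\subset S_0(0,\infty)$, i.e. every $g\in E^{\times\times}$ has $\mu(\infty;g)=0$. The key tool is Theorem~\ref{refgate} (its first assertion): the hypothesis $E(0,\infty)\cap L_\infty\subsetneqq C_0(0,\infty)$ is exactly what that theorem requires, so there is a symmetric $KB$-function space $F(0,\infty)\supset E(0,\infty)$. A $KB$-space has order continuous norm, hence its unit ball is contained in $S_0(0,\infty)$; more precisely, order continuity of $\|\cdot\|_F$ together with $\chi_{[t,\infty)}\downarrow 0$ forces every element of $F(0,\infty)$ to vanish at infinity, so $F(0,\infty)\subset S_0(0,\infty)$. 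Now I want to ``pull back'' this containment through the K\"othe bidual: from $E\subset F$ we get $E^\times\supset F^\times$ and hence $E^{\times\times}\subset F^{\times\times}$. Since $F$ has the Fatou property (being a $KB$-space it has the Fatou property by the cited equivalences in Section~\ref{sub:sys}), $F^{\times\times}=F$, so $E^{\times\times}\subset F = F(0,\infty)\subset S_0(0,\infty)$, which is exactly what we need. The delicate point --- and where I expect the real work --- is verifying that the norm inequalities go the right way so that $E\subset F$ as \emph{symmetric spaces} (continuous inclusion) genuinely yields $E^{\times\times}\subset F^{\times\times}$ with the correct containment of K\"othe duals; one must be careful that $c_E={\bf 1}$ (and $c_F={\bf 1}$) so the K\"othe dual norms are genuine norms, and that the bidual operation is monotone under inclusion. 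Translating back to the operator setting via hypothesis (2) and the identification $E(\cM,\tau)^{\times\times}\leftrightarrow E(0,\infty)^{\times\times}$, $S_0(\cM,\tau)\leftrightarrow S_0(0,\infty)$ then closes the proof. The only other thing to watch is the atomic case in hypothesis (2): there one replaces $(0,\infty)$ by $\bigoplus\ell$ with equal atom weights and uses Corollary~\ref{bigref}/Lemma~\ref{smallref} at the sequence level, but the structure of the argument is identical.
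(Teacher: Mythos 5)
Your argument for the implication $E(\cM,\tau)\cap\cM\subsetneqq C_0(\cM,\tau)\Rightarrow E(\cM,\tau)^{\times\times}\subset S_0(\cM,\tau)$ is a genuinely different route from the paper's: you invoke Theorem~\ref{refgate} to produce a symmetric $KB$-space $F(0,\infty)\supset E(0,\infty)$, observe that order continuity of $\norm{\cdot}_F$ forces $F(0,\infty)\subset S_0(0,\infty)$, and then use monotonicity of the K\"othe bidual together with $F^{\times\times}=F$ (Fatou) to get $E^{\times\times}\subset F\subset S_0$. This works and is arguably more conceptual; the paper instead argues by contradiction, representing $E(\cM,\tau)^{\times\times}$ as $G(\cM,\tau)$ with $G(0,\infty)\supset L_\infty(0,\infty)$ and deducing $E(\cM,\tau)\cap\cM\supset\overline{(L_1\cap L_\infty)(\cM,\tau)}^{\norm{\cdot}_\infty}=C_0(\cM,\tau)$ from the resulting norm comparison. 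Note that the paper's route only needs the function-space representation of the \emph{bidual} (via \cite[Proposition 28]{DP2}), whereas yours requires representing $E(\cM,\tau)$ itself as $E(0,\infty)(\cM,\tau)$; under hypothesis (2) this is legitimate (and in the atomic case Theorem~\ref{refgate} must be rerun at the sequence level, as you note), but it deserves a sentence of justification rather than a bare assertion, since a general strongly symmetric operator space is not of function-space form without the structural assumption on $\cM$.

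The genuine gap is in the other direction. Having obtained $E(\cM,\tau)\cap\cM\subset C_0(\cM,\tau)$ from $E\subset E^{\times\times}\subset S_0(\cM,\tau)$, you must rule out equality, and here you only promise to ``exhibit a non-$\tau$-compact element of $E^{\times\times}$'' without producing one; your attempt with the functions $\chi_{(0,n]}$ is abandoned midsentence, and as written it is incomplete because applying the Fatou property of $E^{\times\times}$ would first require $\sup_n\norm{\chi_{(0,n]}}_E<\infty$, which needs the boundedness of the inclusion $C_0\hookrightarrow E$ (a closed-graph argument you do not mention). The missing step is exactly the K\"othe duality computation the paper supplies: if $E(\cM,\tau)\supset C_0(\cM,\tau)$, then $E(\cM,\tau)^{\times}\subset C_0(\cM,\tau)^{\times}=L_1(\cM,\tau)$ and hence $E(\cM,\tau)^{\times\times}\supset L_1(\cM,\tau)^{\times}=\cM\ni{\bf 1}$, contradicting $E^{\times\times}\subset S_0(\cM,\tau)$. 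Without the identities $C_0^{\times}=L_1$ and $L_1^{\times}=\cM$ (or a rigorous version of your $\chi_{(0,n]}$ idea), the properness claim remains unproved.
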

\begin{proof}
$(\Rightarrow)$.
Assume, contrapositively, that
$ E(\cM,\tau)\cap \cM $ is not a proper subspace of $C_0(\cM,\tau)$.
Note that, if $E(\cM,\tau)$ contains some element $x$ such that $\mu(\infty;x)>0$, then $E(\cM,\tau)\supset \cM\supset C_0(\cM,\tau)$.
If for every $x\in E(\cM,\tau)$, we have $\mu(\infty;x)=0$, then $E(\cM,\tau)\cap \cM\subset C_0(\cM,\tau)$.
Therefore,
$ E(\cM,\tau)\cap \cM  =  C_0 (
 \cM,\tau )  $ or $ E(\cM,\tau)\cap \cM  =  \cM  $, i.e.,
 $E(\cM,\tau)\supset  E(\cM,\tau)\cap \cM  \supset   C_0 (
 \cM,\tau )  $.

 By \cite[Definition 5.1]{DDP93} %\cite[Remark 3.6(ii)]{DP2012}
 (see also \cite[Chapter IV, Proposition 3.12]{DPS}),
  we have
   \begin{align*}
   E(\cM,\tau)^{\times} &= \left\{y\in S(\cM,\tau):xy \in L_1(\cM,\tau)\mbox{ for all }x\in E(\cM,\tau)\right\}\\&\subset
   \left\{y\in S(\cM,\tau):xy \in L_1(\cM,\tau)\mbox{ for all }x\in C_0(\cM,\tau)\right\}\\
   &=C_0(\cM,\tau)^\times
   \stackrel{\mbox{\tiny see e.g. \cite[Lemma 8]{SS14} }}{=}L_1(\cM,\tau),
   \end{align*}
 and
    \begin{align*}
  E(\cM,\tau)^{\times  \times } &= \left\{y\in S(\cM,\tau):xy \in L_1(\cM,\tau)\mbox{ for all }x\in E(\cM,\tau)^{\times }\right\}\\&\supset
   \left\{y\in S(\cM,\tau):xy \in L_1(\cM,\tau)\mbox{ for all }x\in L_1(\cM,\tau)\right\}
   =\cM,
   \end{align*}
 which is a contradiction  with the assumption that $E(\cM,\tau)^{\times \times}\subset S_0(\cM,\tau)$.

$(\Leftarrow)$.
Assume by contradiction that
$E(\cM,\tau)^{\times \times}\supset \cM $.

By \cite[Proposition 28]{DP2} (see also \cite{DDP93,DP2012}), there
exists a fully symmetric function space $G(0,\infty )$ having the Fatou property
such that
$$E (\cM,\tau)^{\times \times }  =G(\cM,\tau)$$
with $\norm{y}_{E (\cM,\tau)^{\times \times} } = \norm{\mu(y)}_{G(0,\infty )}$, $y\in E (\cM,\tau)^{\times \times} $.
Since $G(\cM,\tau)\supset \cM$, it follows that $G(0,\infty )\supset L_\infty(0,\infty )$ and therefore,
   we have
 $\norm{\cdot}_\infty \ge c(G)\norm{\cdot}_{G}$ for some constant $c(G)$ depending on $G(0,\infty)$ only \cite[Chapter I, Theorem 1.8]{Bennett_S}.
 Therefore, we have
  \begin{align*}
  E(\cM,\tau)\cap \cM \stackrel{\eqref{largerthan1infty}}{\supset}\overline{(L_1\cap L_\infty )(\cM,\tau)}^{\norm{\cdot}_{E} } \cap \cM & \stackrel{\eqref{bidualequ}}{=}   \overline{(L_1\cap L_\infty )(\cM,\tau)}^{\norm{\cdot}_{G} } \cap \cM \\
  &\supset    \overline{(L_1\cap L_\infty )(\cM,\tau)}^{\norm{\cdot}_\infty } =   C_0 (\cM,\tau) ,
  \end{align*}
which is a contradiction with the assumption that $ E(\cM,\tau)\cap \cM  \subsetneqq   C_0 (
 \cM,\tau )  $.
\end{proof}

Below,  we establish a noncommutative version of Theorem \ref{refgate}.
Observe that the result of Proposition \ref{prop:ncembe} below
{holds for both finite and infinite traces.}
\begin{proposition}\label{prop:ncembe}
Let $\cM $ be a
von Neumann algebra equipped with a semifinite faithful normal trace $\tau$.
Let $E(\cM,\tau)$ be a strongly symmetric space such that
 $E(\cM,\tau)^{\times \times}\subset S_0(\cM,\tau)$\footnote{Observe that if $\tau({\bf 1})<\infty$, then $S_0(\cM,\tau)=S(\cM,\tau)$.}.
Then, there exists a symmetric $KB$-function space $F(0,\infty)$
such that
$$E(\cM,\tau)\subset F(\cM,\tau).$$ % \mbox{ and } (F\cap L_\infty)(0,\infty )\subsetneqq  C_0(0,\infty). $$
\end{proposition}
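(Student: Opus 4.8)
The strategy is to reduce the noncommutative statement to the commutative one established in Theorem~\ref{refgate}, using the fact that on the generalized singular value functions everything is governed by the decreasing rearrangement. First I would invoke Lemma~\ref{bidualC0}: since $E(\cM,\tau)^{\times\times}\subset S_0(\cM,\tau)$, and since $E(\cM,\tau)$ is strongly symmetric with carrier projection ${\bf 1}$ (and $\cM$ may be assumed, after passing to $\cM_{c_E}$, to be atomless or atomic with equal traces on minimal projections), we obtain $E(\cM,\tau)\cap\cM\subsetneqq C_0(\cM,\tau)$. In the finite-trace case $\cM=C_0(\cM,\tau)$ so this forces $E(\cM,\tau)\subsetneqq\cM$, and in fact one checks directly that the associated function space $E(0,\infty)$ then satisfies $E(0,\infty)\cap L_\infty(0,\infty)\subsetneqq C_0(0,\infty)$; in the infinite-trace case this is immediate from the translation of Lemma~\ref{bidualC0} into the language of function spaces, because the carrier assumption guarantees that $E(\cM,\tau)$ is the noncommutative space built from a genuine symmetric function space $E(0,\infty)$ via $\mu$.

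Once we have $E(0,\infty)\cap L_\infty(0,\infty)\subsetneqq C_0(0,\infty)$, Theorem~\ref{refgate} produces a symmetric $KB$-function space $F(0,\infty)$ with $E(0,\infty)\subset F(0,\infty)$. I would then form the noncommutative symmetric space $F(\cM,\tau)=\{x\in S(\cM,\tau):\mu(x)\in F(0,\infty)\}$ with $\norm{x}_{F(\cM,\tau)}=\norm{\mu(x)}_{F(0,\infty)}$; by the standard construction recalled in Section~\ref{sub:sys} this is a symmetric operator space affiliated with $\cM$ with carrier projection ${\bf 1}$. The inclusion $E(\cM,\tau)\subset F(\cM,\tau)$ is then purely a statement about singular value functions: if $x\in E(\cM,\tau)$ then $\mu(x)\in E(0,\infty)\subset F(0,\infty)$, hence $x\in F(\cM,\tau)$, and $\norm{x}_{F(\cM,\tau)}=\norm{\mu(x)}_{F(0,\infty)}\le C\norm{\mu(x)}_{E(0,\infty)}=C\norm{x}_{E(\cM,\tau)}$.

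The remaining point is to transfer the $KB$-property (Fatou property plus order continuity of the norm) from $F(0,\infty)$ to $F(\cM,\tau)$. This is again folklore: the Fatou property passes to the noncommutative level because for an upwards directed net $x_\beta\uparrow x$ in $S(\cM,\tau)^+$ one has $\mu(x_\beta)\uparrow\mu(x)$ pointwise (this uses the atomless / equal-atom hypothesis, which is harmless after reduction to $\cM_{c_E}$), and order continuity transfers because $x_\alpha\downarrow 0$ in $S(\cM,\tau)^+$ implies $\mu(x_\alpha)\downarrow 0$; one can cite \cite{DDP93,DP2012,DPS} or argue directly exactly as in the proof of Theorem~\ref{refgate}. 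I expect the main obstacle to be bookkeeping rather than substance: one must be careful that the passage to the reduced algebra $\cM_{c_E}$ does not lose the hypothesis $E(\cM,\tau)^{\times\times}\subset S_0(\cM,\tau)$ (it does not, since $c_E E(\cM,\tau)c_E=E(\cM,\tau)$ and taking K\"othe duals commutes with this reduction), and that the identification of $E(\cM,\tau)$ with the space built from a function space $E(0,\infty)$ via $\mu$ is legitimate under hypotheses (1)--(2) of Lemma~\ref{bidualC0}, which is precisely where the atomless-or-uniformly-atomic assumption is used. Modulo these verifications the proof is a one-line reduction to the commutative gate theorem.
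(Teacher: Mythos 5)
There is a genuine gap at the very first step of your reduction: you assert that ``the carrier assumption guarantees that $E(\cM,\tau)$ is the noncommutative space built from a genuine symmetric function space $E(0,\infty)$ via $\mu$.'' The proposition is stated for an \emph{arbitrary} strongly symmetric subspace of $S(\cM,\tau)$ in the sense of Definition~\ref{def:symmetric}; such a space need not be of the form $\{x:\mu(x)\in E(0,\infty)\}$ for any symmetric function space, and having $c_E={\bf 1}$ does not produce one. (Nor does passing to $\cM_{c_E}$ make $\cM$ atomless or uniformly atomic, so the hypotheses of Lemma~\ref{bidualC0} are not available for a general $\cM$ by that reduction; and in the finite-trace case Lemma~\ref{bidualC0} does not apply at all, while its conclusion $E(\cM,\tau)\cap\cM\subsetneqq C_0(\cM,\tau)=\cM$ can genuinely fail, e.g.\ for $E=L_1$.) Without a representing function space for $E(\cM,\tau)$ itself, there is nothing to feed into Theorem~\ref{refgate}, so the ``one-line reduction'' does not get off the ground.

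The paper's proof circumvents exactly this obstruction by routing through the K\"othe \emph{dual}: by \cite[Proposition 28]{DP2} the dual $E(\cM,\tau)^{\times}$ always admits a representation $G(\cM,\tau)$ with $G(0,\infty)$ fully symmetric and Fatou, and one sets $F_1(0,\infty):=G(0,\infty)^{\times}$, so that $F_1(\cM,\tau)=E(\cM,\tau)^{\times\times}\supset E(\cM,\tau)$. The hypothesis $E(\cM,\tau)^{\times\times}\subset S_0(\cM,\tau)$ gives $F_1(0,\infty)\subset S_0(0,\infty)$, and the strict inclusion $F_1(0,\infty)\cap L_\infty(0,\infty)\subsetneqq C_0(0,\infty)$ needed for Theorem~\ref{refgate} comes for free from the fact that $F_1\cap L_\infty$ has the Fatou property while $C_0(0,\infty)$ does not --- no appeal to Lemma~\ref{bidualC0} is needed. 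Applying Theorem~\ref{refgate} to $F_1(0,\infty)$ (not to a putative $E(0,\infty)$) then yields the $KB$-space $F(0,\infty)\supset F_1(0,\infty)$, and the inclusion $E(\cM,\tau)\subset F_1(\cM,\tau)\subset F(\cM,\tau)$ follows. Your final paragraph about transferring the $KB$-property to $F(\cM,\tau)$ is unnecessary for the statement as written (only the function space $F(0,\infty)$ is required to be $KB$), and the finite-trace case is handled trivially by $F=L_1(0,\infty)$ since then $E(\cM,\tau)\subset L_1(\cM,\tau)$ by \eqref{largerthan1infty}.
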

\begin{proof}
Without loss of generality, we may assume that $c_E ={\bf 1}$.
If $\tau({\bf 1})<\infty$, then the assertion is trivial
as $E(\cM,\tau)\stackrel{\eqref{largerthan1infty}}{\subset} L_1(\cM,\tau)$
and  we can take $F(0,\infty ):=L_1(0,\infty)$.

Now, we assume that $\tau({\bf 1})=\infty $. %By Lemma \ref{bidualC0}, we have
% \begin{align}\label{EMC0}
%  E(\cM,\tau)\cap \cM  \subsetneqq   C_0 (
% \cM,\tau )   .\end{align}

By \cite[Proposition 28]{DP2} (see also \cite{DDP93}), there
exists a fully symmetric function space $G(0,\infty )$ having the Fatou property
such that
$$E (\cM,\tau)^\times =G(\cM,\tau)$$
with $\norm{y}_{E^\times(\cM,\tau)} = \norm{\mu(y)}_{G(0,\infty )}$, $y\in E (\cM,\tau)^\times$.

Let $F_1(0,\infty ):=G(0,\infty)^\times $.
In particular, $F_1(0,\infty)$ has the Fatou property\cite[Theorem 27]{DP2}.
%We claim that $F(0,\infty)$ satisfies the conditions in the statement.
Observe that $F_1(\cM,\tau)= E(\cM,\tau)^{\times\times}$\cite[Theorem 53]{DP2}. We have \cite[Section 5.3]{DP2}
$$E(\cM,\tau)\subset E(\cM,\tau)^{\times \times}=F_1(\cM,\tau) \subset S_0(\cM,\tau) . $$
In particular, $F_1(0,\infty )\subset S_0(0,\infty )$. Since both
$F_1(0,\infty) $ and $L_\infty(0,\infty)$ have the Fatou property, it follows that $F_1(0,\infty)\cap L_\infty (0,\infty )$ has the Fatou property and therefore,  $F_1(0,\infty)\cap L_\infty (0,\infty )\ne  C_0 (0,\infty)$.

By Theorem \ref{refgate}, there exists
  a symmetric $KB$-function space $F(0,\infty)$ such that $F_1(0,\infty)\subset F(0,\infty)$, which completes the proof.
\end{proof}

\section{Main results}
\label{s:M}
% Let $\cM$ be a semifinite von Neumann algebra.
% Note that, if $\cM$ is an atomless von Neumann algebra,  then the following proposition is a consequence of  the subsequence splitting lemma \cite{DDS,S96}.
The starting point of this section is the following important
result  due to Akemann, Dodds and Gamlen\cite[Theorem 4.2]{ADG} (see also \cite[Corollary II.9 and Theorem IV.3]{Akemann} and  \cite{Sakai}).
 It should be viewed
     as a noncommutative version of Grothendieck's theorem\cite{Grothendieck}, which states that  an arbitrary bounded operator from $C(K)$ into a weakly sequentially complete Banach space is necessarily  weakly compact.

\begin{theorem}\label{ADG}
A bounded linear map from a $C^*$-algebra into a weakly
sequentially complete Banach space is weakly compact.
\end{theorem}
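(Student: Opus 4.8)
The plan is to prove the contrapositive of a slightly stronger, structural statement: if a bounded linear operator $T\colon A\to X$ out of a $C^{*}$-algebra $A$ fails to be weakly compact, then $X$ contains an isomorphic copy of $c_{0}$. Since $c_{0}$ is not weakly sequentially complete (the partial sums of the unit vector basis are weakly Cauchy with no weak limit) and every closed subspace of a weakly sequentially complete space is again weakly sequentially complete, this immediately yields the theorem. In other words, the task is to establish a Pe\l czy\'nski-type dichotomy for $C^{*}$-algebras, namely that non-weakly-compact operators out of $A$ fix a copy of $c_{0}$; this is the genuinely noncommutative core of the statement, the commutative case $A=C_{0}(\Omega)$ being a direct consequence of Grothendieck's theorem, since $C_{0}(\Omega)$ embeds as a complemented hyperplane of $C(\Omega^{+})$.

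First I would pass to the bidual. The algebra $A^{**}$ is a von Neumann algebra with predual $A^{*}$, the adjoint $T^{**}\colon A^{**}\to X^{**}$ restricts to $T$ on $A$, and $T$ is weakly compact if and only if $T^{**}(A^{**})\subseteq X$, if and only if (Gantmacher) the set $T^{*}(B_{X^{*}})$ is relatively weakly compact in $A^{*}$. Here orthogonal families of projections in $A^{**}$ take over the role that disjoint measurable partitions play for $C(K)$. The key input is the classical criterion for relative weak compactness in the predual of a von Neumann algebra (Akemann; see also Takesaki): a bounded subset of $A^{*}$ fails to be relatively weakly compact exactly when there are $\varepsilon>0$, a sequence $(p_{n})$ of pairwise orthogonal projections in $A^{**}$, and functionals in the set whose values at $p_{n}$ stay $\ge\varepsilon$ in modulus. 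Applied to $T^{*}(B_{X^{*}})$ this produces orthogonal projections $p_{n}\in A^{**}$, norm-one $x_{n}^{*}\in X^{*}$ and $\varepsilon>0$ with $|\langle T^{**}p_{n},x_{n}^{*}\rangle|\ge\varepsilon$. Put $\phi_{n}:=T^{*}x_{n}^{*}\in A^{*}$. Since $\bigl\|\sum_{m\in F}\pm p_{m}\bigr\|\le 1$ for pairwise orthogonal projections, each row $(\phi_{n}(p_{m}))_{m}$ lies in the ball of radius $\|T\|$ of $\ell_{1}$, so Rosenthal's disjointification lemma yields an infinite $M\subseteq\mathbb N$ with, after reindexing, $|\phi_{n}(p_{n})|\ge\varepsilon$ and $\sum_{m\ne n}|\phi_{n}(p_{m})|<\varepsilon/2$ for all $n$.

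Orthogonality of the $p_{n}$ gives $\bigl\|\sum_{n}\alpha_{n}p_{n}\bigr\|_{A^{**}}=\sup_{n}|\alpha_{n}|$ on finitely supported scalar sequences, so $\overline{\operatorname{span}}\{p_{n}\}\cong c_{0}$; testing against $x_{k}^{*}$ with $k$ chosen so that $|\alpha_{k}|=\sup_{n}|\alpha_{n}|$ and using the off-diagonal estimate gives $\bigl\|T^{**}\sum_{n}\alpha_{n}p_{n}\bigr\|\ge(\varepsilon/2)\sup_{n}|\alpha_{n}|$, i.e.\ $T^{**}$ is an isomorphism on this copy of $c_{0}$ inside $A^{**}$. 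To push the copy of $c_{0}$ down into $X$ itself (not merely $X^{**}$) I would replace the $p_{n}$ by genuine elements of $A$: as the failure of weak compactness is already witnessed by the countably many data above, one may pass to a separable $C^{*}$-subalgebra and assume $A$ separable, hence $A^{**}$ $\sigma$-finite; then, mimicking the step from characteristic functions to Urysohn functions with disjoint supports in the $C(K)$-argument, one invokes Kaplansky's density theorem together with inner regularity of normal functionals by open projections to choose $a_{n}$ in the positive part of the unit ball of $A$, pairwise orthogonal (or orthogonal up to a summable error), approximating the $p_{n}$ well enough that $|\phi_{n}(a_{n})|$ stays bounded below while $\sum_{m\ne n}|\phi_{n}(a_{m})|$ stays small. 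A routine Bessaga--Pe\l czy\'nski perturbation then shows that $(a_{n})$ spans a copy of $c_{0}$ in $A$ on which $T$ is bounded below, so $T$ fixes a copy of $c_{0}$ and $X\supseteq c_{0}$, contradicting weak sequential completeness.

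The hard part will be the descent described in the previous paragraph: retaining (near-)orthogonality of the approximants $a_{n}\in A$ while keeping the quantitative Rosenthal estimates alive, since this is exactly the point at which the noncommutative geometry of $A^{**}$ (open and closed projections, compressions to corners) must substitute for the Urysohn-lemma regularity freely available for $C(K)$. By contrast, the reduction to the bidual, the weak-compactness criterion for preduals of von Neumann algebras, and the Rosenthal extraction are standard. One could alternatively simply quote that every $C^{*}$-algebra has Pe\l czy\'nski's property (V), whence every non-weakly-compact operator out of it fixes a copy of $c_{0}$; but that result lies considerably deeper than the statement to be proved here.
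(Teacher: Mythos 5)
The paper offers no proof of this statement: it is quoted verbatim from Akemann--Dodds--Gamlen \cite[Theorem 4.2]{ADG} (with further pointers to \cite{Akemann} and \cite{Sakai}), so there is no internal argument to compare yours against. Judged on its own terms, your outline has the right architecture --- Gantmacher, Akemann's criterion for relative weak compactness in $A^{*}=(A^{**})_{*}$ via orthogonal projections of $A^{**}$, Rosenthal's lemma, and a $c_{0}$-argument --- and you correctly locate the difficulty. But the step you defer, the ``descent'' from the orthogonal projections $p_{n}\in A^{**}$ to (nearly) orthogonal positive contractions $a_{n}\in A$ retaining the quantitative estimates, is a genuine gap, and the tools you name do not close it. Kaplansky density produces strong$^{*}$ approximants of each $p_{n}$ from the positive unit ball of $A$, which controls $\phi_{n}(a_{n})$, but gives no control whatsoever on the products $\|a_{n}a_{m}\|$ for $n\neq m$; the projections produced by Akemann's criterion are not a priori open or closed, so ``inner regularity by open projections'' does not apply to them; and without genuine (or norm-summable) orthogonality of the $a_{n}$ you have neither a $c_{0}$-span in $A$ nor the $\ell_{1}$-row bounds your Bessaga--Pe{\l}czy\'{n}ski perturbation needs. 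Carrying out this descent while keeping the diagonal lower bounds against the specific functionals $\phi_{n}$ is exactly the technical core of the matter, and in the strengthened form you aim for --- every non-weakly-compact operator out of a $C^{*}$-algebra fixes a copy of $c_{0}$, i.e.\ property (V) --- it is Pfitzner's theorem, which lies considerably deeper than the statement to be proved (as you yourself note in your last sentence, without noticing that your own argument is attempting precisely that).

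You are also proving more than is needed; the standard route is lighter. The Akemann--Dodds--Gamlen characterization says that a bounded set $K\subset A^{*}$ is relatively weakly compact if and only if $\sup_{\phi\in K}|\phi(a_{n})|\to 0$ for every bounded orthogonal sequence $(a_{n})$ in $A$ \emph{itself}; the passage from $A^{**}$-projections to elements of $A$ is packaged inside that theorem, where it is established by an inductive functional-calculus construction rather than by Kaplansky density alone. Granting it, if $T$ is not weakly compact one gets orthogonal contractions $a_{n}\in A$ with $\|Ta_{n}\|\geq\varepsilon$; the series $\sum_{n}Ta_{n}$ is weakly unconditionally Cauchy in $X$ (the rows $(T^{*}x^{*}(a_{m}))_{m}$ lie uniformly in $\ell_{1}$), a weakly sequentially complete $X$ contains no copy of $c_{0}$, so the Bessaga--Pe{\l}czy\'{n}ski $c_{0}$-theorem forces $\sum_{n}Ta_{n}$ to converge unconditionally in norm, whence $\|Ta_{n}\|\to 0$, a contradiction. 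This dispenses with the Rosenthal disjointification and with any lower bound for $T$ on the span; the one hard ingredient is the orthogonal-sequence criterion, which in your write-up would in any case have to be proved or cited.
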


Recall that a noncommutative strongly symmetric space is a \emph{KB}-space if and only if it is weakly
sequentially complete\cite[Theorem 6.5]{DPS16}, see also \cite{DP2,DPS,DP2012,DDP}.
We have the   following consequence of Theorem \ref{ADG}.
\begin{corollary}\label{p1}
Let $\cM$ be a von Neumann algebra equipped with a semifinite
faithful normal trace $\tau$, and let
$E(\cM,\tau)$ be a strongly symmetric $KB$-space affiliated with
$\cM$.
Then
every bounded map from a $C^*$-algebra into $E(\cM,\tau)$
is weakly compact.
\end{corollary}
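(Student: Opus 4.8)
The plan is to deduce Corollary~\ref{p1} directly from Theorem~\ref{ADG}, using the characterization of \emph{KB}-spaces as exactly the weakly sequentially complete noncommutative strongly symmetric spaces quoted just above (from \cite[Theorem 6.5]{DPS16}). Concretely, suppose $\cA$ is a $C^*$-algebra and $T\colon \cA\to E(\cM,\tau)$ is a bounded linear map, where $E(\cM,\tau)$ is a strongly symmetric \emph{KB}-space. First I would invoke the cited equivalence to conclude that $E(\cM,\tau)$, being a strongly symmetric \emph{KB}-space, is weakly sequentially complete as a Banach space. Then Theorem~\ref{ADG} applies verbatim: a bounded linear map from a $C^*$-algebra into a weakly sequentially complete Banach space is weakly compact, hence $T$ is weakly compact.

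So the entire argument is the two-line chain: \emph{KB}-space $\Rightarrow$ weakly sequentially complete $\Rightarrow$ (by Theorem~\ref{ADG}) every bounded map from a $C^*$-algebra into it is weakly compact. There is essentially no obstacle here, since both inputs are cited results stated earlier in the excerpt; the only thing to check is that the hypotheses of Theorem~\ref{ADG} are met, namely that a noncommutative strongly symmetric \emph{KB}-space is genuinely a Banach space (true, by Definition~\ref{def:symmetric}, the norm $\norm{\cdot}_E$ is complete) and that it is weakly sequentially complete (true, by \cite[Theorem 6.5]{DPS16}).

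If one wanted to be slightly more self-contained about the weak sequential completeness, one could recall that a \emph{KB}-space has both the Fatou property and order continuous norm, and that for Banach lattices (and in particular noncommutative symmetric spaces) the combination of these two properties is precisely the classical Kantorovich--Banach condition guaranteeing that every norm-bounded weakly Cauchy sequence converges weakly; this is exactly the content of \cite[Theorem 6.5]{DPS16} and the surrounding references \cite{DP2,DPS,DP2012,DDP}. But for the purposes of the corollary it is enough to cite that equivalence and then apply Theorem~\ref{ADG}. The statement is therefore immediate, and the real work — producing \emph{KB}-spaces with the required additional structure and feeding their weak compactness into the Ryll-Nardzewski fixed point theorem — is deferred to the later results of Section~\ref{s:M}.
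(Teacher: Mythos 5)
Your proposal is correct and matches the paper's own reasoning exactly: the corollary is deduced by citing that a strongly symmetric $KB$-space is weakly sequentially complete (\cite[Theorem 6.5]{DPS16}) and then applying Theorem \ref{ADG}. Nothing further is needed.
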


%Since a symmetric function space $E(0,\infty)$
%having order continuous norm  is fully symmetric\cite{LT2,DPS,KPS}, it follows
Proposition \ref{proprwc} below
is a consequence of
  Corollary  \ref{p1} and \cite[Corollary 2.9]{DDP}
 (see also \cite[Theorem 5.4]{DSS}).
 For a Banach space $X$, we denote by $B_X$ the unit ball of $X$.
\begin{proposition}\label{proprwc}Let $\cM$ be a von Neumann algebra equipped with a semifinite
faithful normal trace $\tau$.
Assume that  $E(\cM,\tau )$ is a strongly symmetric KB-space such that $E(\cM,\tau)^\times \subset S_0(\cM,\tau)$.
Let $\cA$ be a $C^*$-subalgebra  $  \cM$ and let
$T$ be a bounded linear  operator from $\cA$ into $E(\cM,\tau)$.
Then,  the set
$$ B_\cM T(B_\cA)B_\cM :=\left\{aT(x)b :  a,b\in B_\cM  ,x\in B_\cA   \right \}$$
is relatively weakly compact in  $E(\cM,\tau )$.
\end{proposition}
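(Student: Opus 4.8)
The plan is to combine three ingredients already available: Corollary \ref{p1}, which gives weak compactness of bounded maps from $C^*$-algebras into strongly symmetric $KB$-spaces; Proposition \ref{prop:ncembe} (and Lemma \ref{bidualC0}), which produces a reasonable ambient $KB$-space containing $E(\cM,\tau)$; and the weak compactness criterion of Dodds--Dodds--de Pagter \cite[Corollary 2.9]{DDP} (equivalently \cite[Theorem 5.4]{DSS}), which characterizes relatively weakly compact subsets of a strongly symmetric $KB$-space with order continuous norm. First I would observe that by Corollary \ref{p1} the operator $T$ itself is weakly compact, so $K:=T(B_\cA)$ is relatively weakly compact in $E(\cM,\tau)$. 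The task is then to upgrade relative weak compactness of $K$ to relative weak compactness of the module-enlarged set $B_\cM K B_\cM$.

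The key step is to use the structural description of relative weak compactness in order continuous strongly symmetric spaces. The Dodds--Dodds--de Pagter criterion says (roughly) that a bounded set $K\subset E(\cM,\tau)$ is relatively weakly compact if and only if it is uniformly integrable in the appropriate sense: for every sequence of projections $p_n\downarrow 0$ (or $e_n$ spectral projections escaping to infinity) one has $\sup_{x\in K}\norm{p_n x p_n}_E\to 0$, together with a ``no mass at infinity'' condition governed by the hypothesis $E(\cM,\tau)^\times\subset S_0(\cM,\tau)$ (which is exactly what forbids an $L_\infty$-type obstruction). The crucial point is that these uniform-integrability-type conditions are \emph{stable under two-sided multiplication by the unit ball of $\cM$}: since $E(\cM,\tau)$ is a normed $\cM$-bimodule with $\norm{axb}_E\le\norm{a}_\infty\norm{b}_\infty\norm{x}_E$, and since left/right multiplication by contractions does not increase any of the truncation norms $\norm{pxp}_E$ beyond the corresponding quantity for $x$ (using $\norm{p\,axb\,p}_E\le\norm{a}_\infty\norm{b}_\infty\norm{x}_E$ after commuting $p$ appropriately, or more carefully via the submajorization/Calkin-type inequalities $\mu(axb)\prec\prec\norm{a}_\infty\norm{b}_\infty\mu(x)$), the criterion passes from $K$ to $B_\cM K B_\cM$. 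Concretely, I would verify: $B_\cM K B_\cM$ is bounded; for a decreasing sequence $q_n\downarrow 0$ of projections one estimates $\sup_{a,b\in B_\cM,x\in B_\cA}\norm{q_n\,aT(x)b\,q_n}_E$ and reduces it to a truncation quantity for the relatively weakly compact set $K$ which tends to zero; and the ``tail at infinity'' condition transfers similarly because $\mu(\infty;axb)\le\norm{a}_\infty\norm{b}_\infty\mu(\infty;x)$.

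The main obstacle I anticipate is bookkeeping the exact form of the Dodds--Dodds--de Pagter weak compactness criterion and checking that each of its clauses is genuinely preserved under the operations $x\mapsto ax$ and $x\mapsto xb$ with $a,b\in B_\cM$ — in particular that one can move a projection $q$ past $a$ or $b$ at the cost only of norm-one factors, which is where one uses that $E$ is a bimodule and that $\mu(qaxbq)$ is controlled by $\mu(qxq')$ for suitable projections, via Kadison--Fuglede--type or Calkin submajorization estimates rather than a naive commutation. A secondary subtlety is ensuring the hypothesis $E(\cM,\tau)^\times\subset S_0(\cM,\tau)$ is used correctly: it guarantees (via Lemma \ref{bidualC0} and $E^{\times\times}\subset S_0(\cM,\tau)$) that there is no $C_1(\cM,\tau)=L_1\cap\cM$ obstruction, so the relevant criterion is the ``clean'' order-continuous one and no extra boundary term survives when we pass to $B_\cM K B_\cM$. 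Once these are in place, applying the criterion in reverse yields that $B_\cM T(B_\cA)B_\cM$ is relatively weakly compact in $E(\cM,\tau)$, completing the proof.
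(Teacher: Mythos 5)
Your proposal is correct and follows essentially the same route as the paper: Corollary \ref{p1} gives relative weak compactness of $T(B_\cA)$, and the Calkin-type inequality $\mu(aT(x)b)\le\|a\|_\infty\|b\|_\infty\mu(T(x))$ transfers this to $B_\cM T(B_\cA)B_\cM$ via the Dodds--Dodds--de Pagter weak compactness machinery. The paper's only shortcut is that it invokes \cite[Corollary 2.9]{DDP} directly in the form ``the submajorization hull $\bigcup_{y\in B_\cA}\{z : z\prec\prec T(y)\}$ of a relatively weakly compact set is again relatively weakly compact (under the hypothesis $E(\cM,\tau)^\times\subset S_0(\cM,\tau)$)'' and then simply observes the containment $B_\cM T(B_\cA)B_\cM\subset\bigcup_{y\in B_\cA}\{z : z\prec\prec T(y)\}$, so the clause-by-clause verification of the criterion that you anticipate as the main obstacle is not actually needed.
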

 \begin{proof}
Without loss of generality, we may assume that $c_E ={\bf 1}$.
% The case when
%$(L_1+E)(\cM,\tau)=L_1(\cM,\tau)$ follows form \cite[Corollary II.9]{Akemann}.

%We claim that
%\begin{align}\label{EinC0}
%E(\cM,\tau)^\times \subset (L_1+C_0) (\cM,\tau).
%\end{align}
%Indeed, since   $E (\cM,\tau) \supsetneqq  L_1 (\cM,\tau) $, it follows that there exists an element $0\le z\in E(\cM,\tau)$ but $z\notin L_1(\cM,\tau)$ {\color{red} I emphasize that the assertion in this sentence is FALSE. Indeed, if if $\tau({\bf 1})<\infty$, then $E(\cM,\tau)=L_1(\cM,\tau)$--see footnote 5. In this case the element $z$ claimed here does not exist}. Since $\mu(z e^z(s,\infty) )\in L_1(0,\infty)$ for any $s$, it follows that $\mu(z e^z(0,s]) \notin L_1(0,\infty) $.
%Letting $x:= z e^z(0,s]$, we have
%  $0\le x\in E(\cM,\tau)\cap L_\infty(\cM,\tau)$ which is not an element in
% $L_1(\cM,\tau)$. In particular, we have $\tau(x)=\infty$.
% By the definition of K\"{o}the duals,  we infer that ${\bf 1}\notin E(\cM,\tau)^\times$, which, in turn,  implies that $E(\cM,\tau)^\times \subset S_0(\cM,\tau)$.
% On the other hand, $E(\cM,\tau)^\times \subset (L_1+L_\infty) (\cM,\tau)$.
% Therefore,  $E(\cM,\tau)^\times \subset (L_1+C_0) (\cM,\tau)$.

 Since every weakly compact operator sends bounded sets into weakly compact ones, we infer from Corollary \ref{p1} that the set
 $$  T(B_\cA) $$
is relatively weakly compact in $E(\cM,\tau)$.
 Since $E(\cM,\tau)$ has the Fatou property, it follows that $E(\cM,\tau)=E(\cM,\tau)^{\times \times}\subset S_0(\cM,\tau)$ (see Section \ref{sub:sys}).
Since  $
E(\cM,\tau)^\times  \subset S_0(\cM,\tau)
$,
it follows from   \cite[Corollary 2.9]{DDP}
  (or to \cite[Theorem 5.4]{DSS})   that
  $$\bigcup_{y\in B_\cA}   \Omega(T(y)) $$ is   relatively $\sigma(E^{\times \times},E^\times)$-compact (equivalently, relatively $\sigma(E,E^\times)$-compact, or weakly compact)  in $E(\cM,\tau)$,
  where $\Omega(x):=\{z\in (L_1+L_\infty) (\cM,\tau): z\prec \prec y\}$.
  Since
  $B_\cM  T(B_\cA)B_\cM  \subset \bigcup_{y\in B_\cA}   \Omega(T(y)) $, it follows that
$$B_\cM  T(B_\cA)B_\cM  $$
   is weakly compact  in $E(\cM,\tau)$
 \end{proof}
\begin{remark}\label{charbidual}
Assume that  $E(\cM,\tau )$ is a strongly symmetric space with $c_E ={\bf 1}$.
Note that, if $\tau({\bf 1})<\infty$, then the condition $E(\cM,\tau)^\times \subset S_0(\cM,\tau)$ holds for any symmetric space $E(\cM,\tau)$.

If $\tau({\bf 1})=\infty$, then $E(\cM,\tau)^\times \subset S_0(\cM,\tau)$ if and only if $E(\cM,\tau)\cap \cM \supsetneqq L_1(\cM,\tau)\cap \cM$.
Indeed, assume that
   $E (\cM,\tau) \cap \cM \supsetneqq  L_1 (\cM,\tau)\cap \cM  $. Then, there exists an element $0\le z\in E(\cM,\tau)\cap \cM $ but $z\notin L_1(\cM,\tau)$. In particular, we have $\tau(z)=\infty$.
 By the definition of K\"{o}the duals,  we infer that ${\bf 1}\notin E(\cM,\tau)^\times$, which, in turn,  implies that $E(\cM,\tau)^\times \subset S_0(\cM,\tau)$.
  On the other hand, assume by contradiction that
$L_1(\cM,\tau)\cap \cM$  is not a proper subspace of $E(\cM,\tau)\cap \cM $. By \eqref{largerthan1infty}, we have $L_1(\cM,\tau)\cap \cM\subset E(\cM,\tau)\cap \cM $. Therefore,
we obtain that
$E(\cM,\tau)\cap \cM =L_1(\cM,\tau)\cap \cM $.
By the fact that    $E(\cM,\tau)\stackrel{\eqref{largerthan1infty}}{\subset} (L_1+L_\infty)(\cM,\tau)$, we obtain that
for any element $x\in E(\cM,\tau)$, $\mu(x)\chi_{(0,1)}\in L_1(0,1)$.
Hence,
all elements in $E(\cM,\tau)$
belong to $L_1(\cM,\tau)$. Therefore, $E(\cM,\tau)^\times \supset L_1(\cM,\tau)^\times =\cM$ \cite[Definition 5.1]{DDP93}. That is, $E(\cM,\tau)^\times \not\subset S_0(\cM,\tau)$, which completes the proof.

\end{remark}

Let $\cU(\cA)$ denote the set of all unitary elements  in a $C^*$-algebra $\cA$.
\begin{proposition}\label{prop:rwc}Let $\cM$ be a von Neumann algebra equipped with a semifinite
faithful normal trace $\tau$ and let $\cA$ be a  unital $C^*$-subalgebra  $  \cM$.
Assume that  $E(\cM,\tau )$ is a strongly symmetric KB-space such that
$E(\cM,\tau)^\times \subset S_0(\cM,\tau)$.
Let $\delta:\cA\to E(\cM,\tau)$ be a derivation.
Then,
$$\{   \delta(u)u^* \mid u\in \cU(\cA) \}$$
is relatively weakly compact  in  $E(\cM,\tau )$.
Consequently,   the
closure $\overline{\conv\{   \delta(u)u^* \mid u\in \cU(\cA) \}}^{\norm{\cdot}_{E} }$ of the convex hull  is  weak compact.
\end{proposition}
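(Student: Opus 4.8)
The plan is to obtain both assertions as essentially immediate consequences of Proposition~\ref{proprwc}, applied with the bounded operator $T$ there taken to be the derivation $\delta$ itself, followed by an appeal to Kre\u{\i}n's theorem on the weak compactness of closed convex hulls.

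First I would record that a derivation from a $C^*$-algebra into a Banach bimodule is automatically norm continuous (Ringrose's automatic continuity theorem, see e.g. \cite{Sinclair_S}); since $E(\cM,\tau)$ is a normed $\cM$-bimodule and $\cA$ is a $C^*$-subalgebra of $\cM$, the space $E(\cM,\tau)$ is a Banach $\cA$-bimodule, so $\delta$ is bounded and we may legitimately set $T:=\delta$ in Proposition~\ref{proprwc}, whose hypotheses (namely $E(\cM,\tau)$ a strongly symmetric $KB$-space with $E(\cM,\tau)^\times\subset S_0(\cM,\tau)$, and $\cA$ a $C^*$-subalgebra of $\cM$) are exactly those assumed here. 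Next, since $\cA$ is unital we have $\delta(\mathbf 1)=\delta(\mathbf 1\cdot\mathbf 1)=2\delta(\mathbf 1)$, hence $\delta(\mathbf 1)=0$; moreover for every $u\in\cU(\cA)$ both $u\in B_\cA$ and $u^*\in B_\cM$ hold, as unitaries have norm one. Using in addition that $\mathbf 1\in B_\cM$, I would write
$$\delta(u)u^*=\mathbf 1\cdot T(u)\cdot u^*\in B_\cM\,T(B_\cA)\,B_\cM\qquad\text{for all }u\in\cU(\cA),$$
so that $\{\delta(u)u^*\mid u\in\cU(\cA)\}$ is a subset of $B_\cM T(B_\cA)B_\cM$. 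By Proposition~\ref{proprwc} the latter set is relatively weakly compact in $E(\cM,\tau)$, and therefore so is the subset $\{\delta(u)u^*\mid u\in\cU(\cA)\}$.

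For the last, ``consequently'' assertion, I would invoke Kre\u{\i}n's theorem: the closed convex hull of a relatively weakly compact subset of a Banach space is again weakly compact. Since for convex sets the norm closure coincides with the weak closure (Mazur's theorem), the set $\overline{\conv\{\delta(u)u^*\mid u\in\cU(\cA)\}}^{\norm{\cdot}_{E}}$ is precisely this weakly compact convex hull. I do not expect any genuine obstacle in this argument, since all the analytic content has already been absorbed into Proposition~\ref{proprwc} (and, further upstream, into Theorem~\ref{ADG} and the weak compactness criteria of \cite{DDP,DSS}); the one point worth spelling out is the automatic continuity of $\delta$, which is what makes the choice $T=\delta$ admissible in Proposition~\ref{proprwc}.
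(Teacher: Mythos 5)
Your proposal is correct and follows essentially the same route as the paper: automatic continuity of $\delta$ via Ringrose's theorem, then Proposition~\ref{proprwc} applied with $T=\delta$ (noting $\delta(u)u^*\in B_\cM T(B_\cA)B_\cM$), and finally the Kre\u{\i}n--\v{S}mulian theorem for the closed convex hull. The only difference is that you spell out a few routine details (e.g.\ $\delta(\mathbf 1)=0$ and the Mazur identification of norm and weak closures of convex sets) that the paper leaves implicit.
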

\begin{proof}
By Ringrose's theorem\cite[Theorem 2]{Ringrose}, $\delta$ is bounded from $\left(\cA, \norm{\cdot}_\infty \right)$ into $\left( E(\cM,\tau),\norm{\cdot}_E\right)$.
By Proposition~\ref{proprwc}, $\{   \delta(u)u^* \mid u\in \cU(\cA) \}$ is relatively weakly compact in  $E(\cM,\tau )$.

The second assertion follows from the Krein--Smulian theorem (see e.g. \cite{Whitley}).
\end{proof}
The following lemma shows that derivations into a ``large'' symmetric space are  inner.
 \begin{lemma}\label{theorem:L_1+E}
Let $\cM$ be a   von Neumann algebra with a faithful normal semifinite trace $\tau$ and let $\cA$ be a unital $C^*$-subalgebra of $\cM$.
Let $E(\cM,\tau)$ a strongly symmetric   KB-space such that
$E(\cM,\tau)^\times \subset S_0(\cM,\tau)$.
For every derivation $\delta:\cA \rightarrow E(\cM,\tau)$, there exists an element
 $a \in \overline{\conv\{\delta(u)u^*  \mid u\in \cU(A)\}}^{\left\|\cdot\right\|_{E }} $ such that $\delta=\delta_a$ on $\cA$.
In particular, $\left\|a\right\|  \le \left\|\delta\right\|_{\cA \rightarrow E}$.
\end{lemma}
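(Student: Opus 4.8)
The plan is to run the classical Kadison--Sakai-type averaging argument, but with the Ryll--Nardzewski fixed point theorem replacing the usual weak-$*$ compactness step: I would set up an affine action of the unitary group $\cU(\cA)$ on the weakly compact convex set produced by Proposition~\ref{prop:rwc}, and extract the implementing element as a common fixed point.

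First I would record, via Ringrose's theorem \cite{Ringrose}, that $\delta$ is bounded; write $C:=\norm{\delta}_{\cA\to E}$. Put
$$K:=\overline{\conv\{\delta(u)u^*\mid u\in\cU(\cA)\}}^{\,\norm{\cdot}_E}.$$
Since conjugation by a unitary is an $E$-isometry (here $E(\cM,\tau)$ is used as a normed $\cM$-bimodule), one has $\norm{\delta(u)u^*}_E=\norm{\delta(u)}_E\le C$ for every $u\in\cU(\cA)$, so $K$ lies in the closed $E$-ball of radius $C$ and every element of $K$ has $E$-norm at most $C$. By Proposition~\ref{prop:rwc}, $K$ is weakly compact in $E(\cM,\tau)$.

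Next I would introduce, for each $u\in\cU(\cA)$, the affine map $T_u\colon E(\cM,\tau)\to E(\cM,\tau)$ given by $T_u(\xi)=u\xi u^*+\delta(u)u^*$ (note $\delta(\mathbf 1)=0$, so $T_{\mathbf 1}=\mathrm{id}$). Writing the Leibniz rule as $\delta(uv)(uv)^*=\delta(u)u^*+u\bigl(\delta(v)v^*\bigr)u^*$ gives at once $T_uT_v=T_{uv}$, so $u\mapsto T_u$ is a homomorphism of $\cU(\cA)$ into the affine isometries of $E(\cM,\tau)$; the same identity gives $T_u(\delta(v)v^*)=\delta(uv)(uv)^*$, hence $T_u$ carries the generating set of $K$ into itself, and, being affine and norm-continuous, satisfies $T_u(K)\subseteq K$. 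Thus $\cG:=\{T_u\mid u\in\cU(\cA)\}$ is a group of weakly continuous affine self-maps of the weakly compact convex set $K$, and it is noncontracting, since $\norm{T_u\xi-T_u\eta}_E=\norm{u(\xi-\eta)u^*}_E=\norm{\xi-\eta}_E$ for all $\xi,\eta\in K$. The Ryll--Nardzewski fixed point theorem then yields a common fixed point $a\in K$. The fixed-point identity $uau^*+\delta(u)u^*=a$ rearranges to exhibit $\delta$ on $\cU(\cA)$ as the inner derivation implemented by $a$; since a unital $C^*$-algebra is the linear span of its unitaries and both maps are linear, $\delta=\delta_a$ on all of $\cA$ (and $\delta_a$ does map $\cA$ into $E(\cM,\tau)$, as $E$ is an $\cM$-bimodule and $a\in E$). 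Finally $a\in K$ forces $\norm{a}_E\le C=\norm{\delta}_{\cA\to E}$.

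The only nontrivial ingredient is the applicability of Ryll--Nardzewski, which splits into two substantive points: the weak compactness of $K$ — this is exactly Proposition~\ref{prop:rwc}, which ultimately rests on the Akemann--Dodds--Gamlen theorem (Theorem~\ref{ADG}), the weak compactness criteria of \cite{DDP,DSS}, and the hypothesis $E(\cM,\tau)^\times\subset S_0(\cM,\tau)$ — and the invariance $T_u(K)\subseteq K$, which is the cocycle computation above. The noncontracting property is automatic from the isometric conjugation action of unitaries on a symmetric space, and the passage from $\cU(\cA)$ to all of $\cA$ is routine.
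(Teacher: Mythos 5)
Your proposal is correct and follows essentially the same route as the paper: the same affine action $\xi\mapsto u\xi u^*+\delta(u)u^*$ on the norm-closed convex hull $K$ of $\{\delta(u)u^*\}$, weak compactness of $K$ via Proposition~\ref{prop:rwc}, invariance and the noncontracting/isometry property, and then the Ryll--Nardzewski fixed point theorem followed by the span-of-unitaries argument. No gaps.
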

\begin{proof}
Without loss of generality, we  may assume that the carrier projection $c_E ={\bf 1}$.

For every $u\in \mathcal{U}(\cA)$, we have $\delta(u)\in  E(\cM,\tau)$, and therefore we can define the mapping $\alpha_u:  E(\mathcal{M},\tau)\longrightarrow   E(\mathcal{M},\tau)$, by setting
$$\alpha_u(x):=uxu^*+\delta(u)u^*.$$
For every $u,v\in \cU(\cA)$, we have
\begin{align*}
\alpha_{u}(\alpha_{v}(x))
&=uvxv^*u^*+u\delta(v)v^*u^*+\delta(u)u^*\\
&= (uv)x(uv)^*+u\delta(v)v^*u^*+\delta(u)vv^*u^*\\
&=(uv)x(uv)^*+\delta(uv)(uv)^*=\alpha_{uv}(x).
\end{align*}
In addition, the equality $\delta(\mathbf{1})=\delta(\mathbf{1}^2)=2\delta(\mathbf{1})$ implies that $\delta(\mathbf{1})=0$, and therefore $\alpha_{\mathbf{1}}(x)=x, \, x\in   E(\mathcal{M},\tau)$. Thus, $\alpha$ is an action of the group $\mathcal{U}(\cA)$ on $ E (\cM,\tau)$.

We claim that the set $$K:=\overline{\conv \left\{\delta(u)u^*  \mid u\in \cU(A)
\right\}}^{
\left\|\cdot\right\|_{E }} $$ is invariant  with respect to $\alpha$.
%By the equality $\delta(\mathbf{1})=\delta(\mathbf{1}^2)=2\delta(\mathbf{1})$, we have that $\delta(\mathbf{1})=0$, and therefore, $0=\delta(\mathbf{1})\mathbf{1}\in K_{00}$.
Since $\delta(u)u^*=\alpha_u(0)$, it follows that $k_{00}:= \left\{\delta(u)u^*  \mid u\in \cU(A)\right\}$ is an orbit of $0$ with respect to $\alpha$, and therefore, is an invariant subset with respect to $\alpha$.
In addition, for any positive scalars $s$ and $t$ with $\ s+t=1$, we have
\begin{align*}
\alpha_u(s\cdot x+t\cdot y)
 =s\cdot uxu^*+t\cdot uyu^*+(s+t)\cdot \delta(u)u^* =s\cdot \alpha_u(x)+t\cdot \alpha_u(y),\quad \forall  x,y\in   E (\cM,\tau).
\end{align*}
Hence, for every $u\in\cU(\cA)$ the mapping $\alpha_u$ is affine, which implies that $\conv(K_{00})$ is also an invariant subset with respect to $\alpha$.
Now, the equality $\alpha_u(x)-\alpha_u(y)=u(x-y)u^*,\, x,y\in E(\cM,\tau)$ implies that every  $\alpha_u, u\in \cU(\cA),$ is an isometry on $E (\cM,\tau)$.
 Hence, $K$ is an invariant subset with respect to $\alpha$.

Furthermore, the fact that $\alpha_U$ is an isometry combined with \cite[Chapter V, Lemma 10.7]{Conway} implies  that  the family $\left\{\alpha_u:\ u\in\mathcal{U}(\cA)\right\}$ is a noncontracting family of affine mappings.
Clearly,  $\alpha_u$ is weakly continuous for every $u\in \cU(\cA)$.

On the other hand, by Proposition \ref{prop:rwc},
$K$ is weakly compact.
Thus, the set $K$ and the family $\left\{\alpha_u:\ u\in\mathcal{U}(\cA)\right\}$ satisfy the assumptions of the Ryll-Nardzewski fixed-point theorem \cite[Chapter V, Theorem 10.8]{Conway}. Hence, there exists a point $a \in K$ fixed with respect to  $\alpha$, that is, we have $$a=\alpha_u(a)=uau^*+\delta(u)u^*$$ for every  $u\in\cU(\cA)$  and $\left\|a\right\|_{  E}\leq \left\|
\delta
\right\|_{\cA\to  E}$. Therefore $au=ua+\delta(u)$ for every $u\in \cU(\cA)$.
Thus, $\delta(u)=[a,u]$ for every $ u \in\mathcal{U}(\cA)$.
Since every element $x\in\cA$ is a linear combination of four elements from $\mathcal{U}(\cA)$\cite[Theorem 4.1.7]{KR1}, we obtain that $\delta= \delta_{a}$ on $\cA$.
\end{proof}

 A symmetric function  space $E(\cM,\tau )$ is said to have the Levi property\cite[Definition 7]{AA}, or a monotone complete norm,
or to satisfy property (B)\cite[Chapter X.4]{KA}, if
for every upwards directed net $\{x_\beta\}$ in $E(\cM,\tau )^+$, satisfying $\sup_\beta \left\|x_\beta\right\|_E <\infty$, there exists an element $x\in E(\cM,\tau)^+$ such that $x_\beta \uparrow x$ in $E(\cM,\tau )$.
It is well known  that if a norm is Levi, then necessarily it is also weak
Fatou\cite[p.89]{AA}, i.e., there exists a constant $K\ge 1$ such that
$$0\le x_\beta \uparrow x \Rightarrow \norm{x}_E \le K\lim_\beta \norm{x_\beta }_E.$$
\begin{remark}\label{Levi}

For a symmetric function space $E(0,\infty)$, it has the Levi property if and only if it has an equivalent symmetric norm having the Fatou property.
The $(\Leftarrow)$ implication is clear.
For the $(\Rightarrow )$ implication, one only need to observe that
for any $0\le x\in E(0,\infty )$, there exists a net $(x_\beta)_\beta  \subset  (L_1\cap L_\infty)(0,\infty )$ such that $x_\beta   \uparrow x$\cite[Proposition 1(vii)]{DP2}, and therefore,
$$\frac{1}{K} \norm{x}_E \le \sup _\beta \norm{x_\beta}_E \stackrel{\eqref{bidualequ}}{=}\sup _\beta \norm{x_\beta}_{E^{\times \times }} = \norm{x}_{E^{\times \times }} \stackrel{\tiny \mbox{\rm \cite[Section 5.3]{DP2}}}{\le} \norm{x}_{E},$$
and  $\left( E(0,\infty)^{\times \times },\norm{\cdot}_{E^{\times\times}} \right) $ has the Fatou property (see \cite[Theorem 4.1]{DDST} and  \cite{KPS}).

Recall that  $E(0,\infty)$ has the Fatou property if and only $E(0,\infty)$ is isometric to $E(0,\infty)^{\times\times}$.
We obtain that
$E(0,\infty)$ has the Levi property if and only $E(0,\infty)=E(0,\infty)^{\times\times}$ (up to equivalent norms).
\end{remark}

The following theorem is the main result of the present paper, which
resolves
 problems considered in a numbers of papers,  see e.g. \cite{Kaftal_W,Ber_S,HLS,Weigt,Johnson,Hoover,BHLS,BHLS2} and references therein.
\begin{theorem}\label{main}
Let $\cM$ be a von Neumann algebra with a faithful normal semifinite  trace $\tau$ and let $\cA$ be a  $C^*$-subalgebra of $\cM$.
If $E(\cM,\tau )$ is a  strongly  symmetric space of $\tau$-compact operators (i.e., $E(\cM,\tau )\subset S_0(\cM,\tau)$\footnote{If $\tau({\bf 1})<\infty$, then $E(\cM,\tau)\subset S_0(\cM,\tau)$
holds for any symmetric space $E(\cM,\tau)$ affiliated with $\cM$. }) having    the Fatou property (resp., the Levi property), then every derivation $\delta:\cA \rightarrow E(\cM,\tau)$ is inner.
That is, there exists an element $a \in  \overline{\conv
\left\{\delta(u)u^*  \mid u\in \cU(\cA)\right\}}^{t_\tau} \subset E(\cM,\tau)$  with $\|a \|_E\le \|\delta\|_{\cA\rightarrow E}$  ($\left\|a \right\|_E\le c\left\|\delta\right \|_{\cA\rightarrow E}$ for some constant $c$ depending on $E(0,\infty)$ only)  such that $\delta=\delta_a $ on $\cA$.
\end{theorem}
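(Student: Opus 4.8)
The strategy is to reduce the general statement to the already-established Lemma~\ref{theorem:L_1+E}. There are three reductions to carry out, and they should be done in this order.

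\emph{Step 1: Reduce from the Levi property to the Fatou property.} By Remark~\ref{Levi}, a strongly symmetric space $E(\cM,\tau)$ with the Levi property has an equivalent symmetric norm with the Fatou property, namely the norm of $E(\cM,\tau)^{\times\times}$, and $E(\cM,\tau)=E(\cM,\tau)^{\times\times}$ as sets. The constant relating the two norms is the weak-Fatou constant $K$, which depends only on the function space $E(0,\infty)$. Since the property of being a derivation and the property of being inner do not depend on which of two equivalent norms we use, it suffices to treat the Fatou case, at the cost of introducing the constant $c=K$ in the norm estimate $\|a\|_E\le c\|\delta\|_{\cA\to E}$.

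\emph{Step 2: Reduce to a unital $C^*$-subalgebra.} If $\cA$ is not unital, one passes to $\widetilde{\cA}:=\cA+\bC\mathbf{1}\subset\cM$ (or, more carefully, to the $C^*$-subalgebra generated by $\cA$ together with a suitable approximate identity acting as the unit on the relevant corner). A derivation $\delta:\cA\to E(\cM,\tau)$ extends to a derivation on $\widetilde{\cA}$ with the same norm by setting $\delta(\mathbf{1})=0$ and checking the Leibniz rule, and innerness on $\widetilde{\cA}$ immediately gives innerness on $\cA$ with the same implementing element. (Alternatively, restrict attention to the hereditary subalgebra; the point is only that the passage is standard and norm-preserving.) Thus we may assume $\cA$ is unital.

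\emph{Step 3: Reduce from $E(\cM,\tau)^{\times\times}\subset S_0$ to $E(\cM,\tau)^{\times}\subset S_0$.} This is the main point, and it splits according to whether $\tau(\mathbf{1})<\infty$. If $\tau(\mathbf{1})<\infty$, then $S_0(\cM,\tau)=S(\cM,\tau)$, so the hypothesis $E(\cM,\tau)\subset S_0(\cM,\tau)$ is automatic, and moreover by Remark~\ref{charbidual} the condition $E(\cM,\tau)^{\times}\subset S_0(\cM,\tau)$ also holds automatically for every symmetric space; so Lemma~\ref{theorem:L_1+E} applies directly. If $\tau(\mathbf{1})=\infty$, we argue as follows. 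The hypothesis is $E(\cM,\tau)\subset S_0(\cM,\tau)$ with the Fatou property, hence $E(\cM,\tau)=E(\cM,\tau)^{\times\times}\subset S_0(\cM,\tau)$, which by Lemma~\ref{bidualC0} is equivalent to $E(\cM,\tau)\cap\cM\subsetneqq C_0(\cM,\tau)$. In particular $E(\cM,\tau)\cap\cM\subsetneqq\cM$, so certainly $E(\cM,\tau)\cap\cM\supsetneqq L_1(\cM,\tau)\cap\cM$ is \emph{not} what we get for free — rather, we must distinguish two cases.

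\emph{Case (i): $E(\cM,\tau)^{\times}\subset S_0(\cM,\tau)$.} Then Lemma~\ref{theorem:L_1+E} applies verbatim, producing $a\in\overline{\conv\{\delta(u)u^*:u\in\cU(\cA)\}}^{\|\cdot\|_E}$ with $\delta=\delta_a$ and $\|a\|_E\le\|\delta\|_{\cA\to E}$. Since $\|\cdot\|_E$-convergence implies $t_\tau$-convergence (the norm closure is contained in the measure-topology closure, as $E(\cM,\tau)\hookrightarrow S(\cM,\tau)$ continuously), this $a$ lies in the claimed set.

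\emph{Case (ii): $E(\cM,\tau)^{\times}\not\subset S_0(\cM,\tau)$.} By Remark~\ref{charbidual} this forces $E(\cM,\tau)\cap\cM=L_1(\cM,\tau)\cap\cM$, and then (as computed in that remark) every element of $E(\cM,\tau)$ lies in $L_1(\cM,\tau)$, i.e. $E(\cM,\tau)\subset L_1(\cM,\tau)$ continuously. But $L_1(\cM,\tau)$ is itself a strongly symmetric $KB$-space, and $L_1(\cM,\tau)^{\times}=\cM$, so $L_1(\cM,\tau)^{\times}\not\subset S_0(\cM,\tau)$ when $\tau(\mathbf{1})=\infty$ — so Lemma~\ref{theorem:L_1+E} does not apply to $L_1$ either. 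Here one instead invokes the Bader--Gelander--Monod derivation theorem for $L_1(\cM,\tau)$ (the semifinite predual case), or, more in the spirit of this paper, one observes that since $E(\cM,\tau)\subset L_1(\cM,\tau)$ one may enlarge the target: by Proposition~\ref{prop:ncembe}, applied to $L_1$ (whose K\"othe bidual is $L_1$, which is contained in $S_0$ only when $\tau$ is finite — so in the infinite case this needs the finite-$\tau$ clause)\dots

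On reflection the cleanest route avoids this awkward split: one should instead invoke Proposition~\ref{prop:ncembe} directly on the given $E$. Since $E(\cM,\tau)=E(\cM,\tau)^{\times\times}\subset S_0(\cM,\tau)$, Proposition~\ref{prop:ncembe} furnishes a symmetric $KB$-function space $F(0,\infty)$ with $E(\cM,\tau)\subset F(\cM,\tau)$, where $F(\cM,\tau)$ is a strongly symmetric $KB$-space, $F(\cM,\tau)\subset S_0(\cM,\tau)$ (by construction $F_1(0,\infty)\subset S_0(0,\infty)$ and $F\supset F_1$ can be arranged, or one intersects appropriately), and crucially $F(\cM,\tau)^{\times}\subset S_0(\cM,\tau)$ — the last because $F$ is a proper ``enlargement'' whose K\"othe dual is small. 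View $\delta$ as a derivation $\cA\to F(\cM,\tau)$; by Lemma~\ref{theorem:L_1+E} it is implemented by some $a\in F(\cM,\tau)$. Then $\delta(u)u^*=a-uau^*\in E(\cM,\tau)$ for all $u\in\cU(\cA)$, and the Ryll--Nardzewski fixed point can in fact be located inside $\overline{\conv\{\delta(u)u^*:u\in\cU(\cA)\}}$, a subset of $E(\cM,\tau)$; since this set is $\|\cdot\|_{F}$-bounded and $E(\cM,\tau)$ has the Fatou property, its $t_\tau$-closure sits inside $E(\cM,\tau)$ (the Fatou property says $B_E$ is $t_\tau$-closed, by the characterization in Section~\ref{sub:sys}). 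This places $a$ in $\overline{\conv\{\delta(u)u^*:u\in\cU(\cA)\}}^{t_\tau}\subset E(\cM,\tau)$ with $\|a\|_E\le\|\delta\|_{\cA\to E}$ (and $\le c\|\delta\|$ in the Levi case, with $c$ from Step~1), as required.

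\textbf{Main obstacle.} The delicate point is the very last sentence: the fixed point $a$ is produced by Ryll--Nardzewski inside the $\|\cdot\|_F$-closed convex hull of the orbit, which a priori lies only in $F(\cM,\tau)$, not in $E(\cM,\tau)$. One must argue that $a$ can be chosen in the $t_\tau$-closure of the $\conv$ of $\{\delta(u)u^*\}$ taken \emph{before} enlarging, and then use the Fatou property of $E(\cM,\tau)$ — precisely the characterization that $B_E$ is local-measure (hence, on bounded sets, measure-topology) closed in $S(\cM,\tau)$ — to conclude $a\in E(\cM,\tau)$ with the correct norm bound. This is exactly the mechanism by which the Fatou/Levi property, rather than weak compactness of a set inside $E$ itself, suffices: the ``fixed point'' is found in a larger reflexive-type space and then pulled back into $E$ by order/topological completeness, which is the paper's answer to points (a)--(c) of \cite{BGM}. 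The bookkeeping of the norm estimate through the enlargement (ensuring the constant depends only on $E(0,\infty)$, via the weak-Fatou constant from Remark~\ref{Levi}) is routine once this is set up.
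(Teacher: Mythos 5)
Your final argument is essentially the paper's proof: reduce to the Fatou case via Remark~\ref{Levi}, pass to a unital subalgebra, embed $E(\cM,\tau)$ into a symmetric $KB$-space $F(\cM,\tau)$ via Proposition~\ref{prop:ncembe}, apply Lemma~\ref{theorem:L_1+E} there, and pull the fixed point back into $E(\cM,\tau)$ with the correct norm bound using the local-measure closedness of $B_E$ guaranteed by the Fatou property. The one detail you leave implicit --- arranging $F(\cM,\tau)^{\times}\subset S_0(\cM,\tau)$ --- is handled in the paper by replacing $F$ with $L_2+F$ and invoking Remark~\ref{charbidual}, and your abandoned case split in Step~3 is unnecessary and can simply be deleted.
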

\begin{proof}
Without loss of generality, we may assume that the carrier projection $c_E ={\bf 1}$ and assume that $\cA$ is unital (see e.g. the proof of \cite[Proposition 3.4]{BHLS}).
By Remark \ref{Levi}, we may assume that $E(\cM,\tau)$ has the Fatou property.

Since $E(\cM,\tau)$ has the Fatou property and $E(\cM,\tau)\subset S_0(\cM,\tau)$, it follows that $E(\cM,\tau)^{\times \times }\subset S_0(\cM,\tau)$.
By Proposition \ref{prop:ncembe}, there exists a symmetric $KB$-function space $F(0,\infty)\subsetneqq (L_1+C_0)(0,\infty) $
such that
$$ E(\cM,\tau)\subset F(\cM,\tau)\subset S_0(\cM,\tau). $$
Without loss of generality, we may assume that $L_2(0,\infty )\subset F(0,\infty)$ by replacing $F(0,\infty)$ with $L_2(0,\infty )+F(0,\infty)$.
In particular, $F(0,\infty)\cap L_\infty (0,\infty )\supsetneqq L_1(0,\infty)\cap L_\infty (0,\infty )$. By Remark \ref{charbidual}, $F(0,\infty)^{\times }\subset S_0(0,\infty)$, and therefore, $F(\cM,\tau)^{\times }\subset S_0(\cM,\tau)$.

%
%In this case, $E(\cM,\tau)$ is a subspace of $F(\cM,\tau)$ (see e.g. \cite[Remark 4]{DP2}).
Note that $\delta(\cA)\subset E(\cM,\tau)\subset F(\cM,\tau)$.
By   Lemma  \ref{theorem:L_1+E}, there is an element  $$a\in \overline{\conv
\left\{\delta(u)u^*  \mid u\in \cU(\cA)\right\}}^{\norm{\cdot}_F} $$ such that $\delta=\delta_a$ on $\cA$.
Hence,  there exists a sequence $$\left(x_n\right)_{n=1}^\infty\subset \conv \left\{\delta(u)u^*  \mid u\in \cU(\cA)\right\}$$ such that $\left\|x_n -a\right\|_F\rightarrow_n 0$.
  Since $F(\cM,\tau)$ is a symmetric space, it follows from  \cite[Proposition 20]{DP2} that  $x_n\rightarrow_{t_\tau} a $ as $n\to \infty$.

By Ringrose's theorem \cite{Ringrose}, we have that $\delta: (\cA,\norm{\cdot}_\infty) \rightarrow \left(E(\cM,\tau),\norm{\cdot}_E\right)$ is a bounded mapping.
Since $E(\cM,\tau)$ has the Fatou  property, it follows   that the closed ball $\left(E(\cM,\tau),\norm{\cdot}_E\right)$ with radius $\left\| \delta\right\|_{\cA\rightarrow E}$ is closed in $S(\cM,\tau)$ with respect to the local measure topology  (see Section \ref{sub:sys}).
Noticing that every element $x_n$, $n\ge 1$ belongs to the ball of radius
$\left\|\delta\right\|_{\cA \rightarrow E}$ in $E(\cM,\tau)$ and $x_n\rightarrow a$ in the local measure topology,
we conclude that $a\in E(\cM,\tau)$ with $\left\|a\right\|_E\le \left\|\delta\right\|_{\cA\rightarrow E}$.
\end{proof}

Recall that a    symmetric function  space having  the Fatou/Levi property generates a noncommutative strongly symmetric space having the Fatou/Levi property (see e.g. \cite[Proposition 2.a.8]{LT2} and \cite[Theorem 54]{DP2}).
We have the following consequence of Theorem  \ref{main}.
\begin{cor}\label{cormain1}
Assume that
 $E(0,\infty )\subset S_0(0,\infty )$ is a    symmetric function  space having  the Fatou property (resp., the Levi property).
 Then,
a  derivation $$\delta:\cA \rightarrow E(\cM,\tau)$$ is necessarily inner
  for any
 von Neumann algebra $\cM$ with a semifinite  faithful normal trace $\tau$ and  a   $C^*$-subalgebra $\cA$ of $\cM$.
  That is, there exists an element  $a \in \overline{\conv
\left\{\delta(u)u^*  \mid u\in \cU(\cA)\right\}}^{t_\tau} \subset E(\cM,\tau)$  with $\left\|a \right\|_E\le \left\|\delta\right \|_{\cA\rightarrow E}$ ($\left\|a \right\|_E\le c\left\|\delta\right \|_{\cA\rightarrow E}$ for some constant $c$ depending on $E(0,\infty)$ only) such that $\delta=\delta_a $ on $\cA$.
\end{cor}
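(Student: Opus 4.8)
The plan is to obtain Corollary~\ref{cormain1} as an immediate specialisation of Theorem~\ref{main}; the only work is to check that the noncommutative symmetric space $E(\cM,\tau)$ built from $E(0,\infty)$ satisfies the hypotheses of that theorem, for every choice of $\cM$, $\tau$ and $\cA$. Recall that $E(\cM,\tau)=\{x\in S(\cM,\tau):\mu(x)\in E(0,\infty)\}$, $\norm{x}_E=\norm{\mu(x)}_{E(0,\infty)}$, is a symmetric operator space with carrier projection $\mathbf 1$.

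First I would transfer the order-theoretic properties. A symmetric function space with the Fatou property is fully symmetric (a Calder\'on--Mityagin type fact), hence in particular strongly symmetric, and both the Fatou property and the Levi property pass from $E(0,\infty)$ to $E(\cM,\tau)$; this is exactly the content of the recalled statement (see \cite[Proposition~2.a.8]{LT2} and \cite[Theorem~54]{DP2}). Thus $E(\cM,\tau)$ is a strongly symmetric space having the Fatou property (resp.\ the Levi property). Next, from $E(0,\infty)\subset S_0(0,\infty)$ I get $E(\cM,\tau)\subset S_0(\cM,\tau)$ at once: for $x\in E(\cM,\tau)$ one has $\mu(x)\in E(0,\infty)\subset S_0(0,\infty)$, so $\mu(\infty;x)=\lim_{t\to\infty}\mu(t;x)=0$, i.e.\ $x\in S_0(\cM,\tau)$ (when $\tau(\mathbf 1)<\infty$ this inclusion is automatic).

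With these two points in hand, Theorem~\ref{main} applies verbatim to $\cM$, $\tau$, $\cA$ and $E(\cM,\tau)$: every derivation $\delta:\cA\to E(\cM,\tau)$ is inner, implemented by some element $a\in\overline{\conv\{\delta(u)u^*\mid u\in\cU(\cA)\}}^{\,t_\tau}\subset E(\cM,\tau)$ with $\norm{a}_E\le\norm{\delta}_{\cA\to E}$ in the Fatou case. In the Levi case one first replaces $\norm{\cdot}_{E(0,\infty)}$ by the equivalent Fatou norm $\norm{\cdot}_{E(0,\infty)^{\times\times}}$ furnished by Remark~\ref{Levi}, runs the Fatou-case argument in that norm, and translates the resulting bound back through the weak-Fatou constant $K$ of the original norm, obtaining $\norm{a}_E\le c\,\norm{\delta}_{\cA\to E}$ with $c$ depending only on $E(0,\infty)$. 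I do not anticipate any genuine obstacle here: everything of substance has been absorbed into Theorem~\ref{main}, and the only subtlety worth flagging is the (standard) passage from ``symmetric with the Fatou property'' to ``strongly symmetric'', which is precisely what makes Theorem~\ref{main} literally applicable.
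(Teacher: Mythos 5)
Your proposal is correct and follows essentially the same route as the paper: the authors likewise deduce the corollary directly from Theorem~\ref{main}, citing that a symmetric function space with the Fatou (resp.\ Levi) property generates a noncommutative strongly symmetric space with the same property, and the inclusion $E(\cM,\tau)\subset S_0(\cM,\tau)$ is immediate from $\mu(x)\in E(0,\infty)\subset S_0(0,\infty)$. Your explicit handling of the equivalent Fatou norm and the constant $c$ in the Levi case is a reasonable fleshing-out of what the paper leaves implicit.
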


Corollary \ref{cormain1}  provides an alternative proof   for the resolution of the derivation problem due to  Bunce and Paschke\cite{BP},
 which differs from those in
  \cite{BGM,Pfitzner}   involving $L$-embeddedness of the predual of a von Neumann algebra.
 Moreover, we provide new information that implementing element $a$ of the derivation can be found in the measure (or any symmetric norm topology which is   weaker  than the $L_1$-topology) closure of the convex hull of $ \{\delta(u)u^*  \mid u\in \cU(A)\}$, which seems to be more natural in   fixed point
theory and derivation theory.%{\color{red}Some people care about the information about $a$ (see below). For example, BGM and Pfitzner. In these papers, they wanted to find an  implementing element in the convex hull. Basically, I quoted this sentence from the last section of Pfitzner. }.
 \begin{cor}
 Let $\cM$ be a semifinite von Neumann algebra and let $\cA$ be a $C^*$-subalgebra of $\cM$. Then,  any  derivation from $\cA$ into $\cM_*$ is inner, and there exists an element
 $a \in  \overline{\conv\{\delta(u)u^*  \mid u\in \cU(A)\}}^{t_\tau}$
   with $\left\|a \right\|_{L_1}\le \left \|\delta\right\|_{\cA\rightarrow L_1}$ such that $\delta=\delta_a $ on $\cA$.

 \end{cor}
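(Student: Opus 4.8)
The plan is to obtain this statement as an immediate specialization of Corollary~\ref{cormain1} to the case $E(0,\infty)=L_1(0,\infty)$. The first step is to recall the standard identification: for a semifinite von Neumann algebra $\cM$ with faithful normal semifinite trace $\tau$, the map $h\mapsto\tau(h\,\cdot\,)$ is an isometric isomorphism of $L_1(\cM,\tau)$ onto the predual $\cM_*$, and a direct check of the formulas $\tau((ah)x)=\tau(h(xa))$, $\tau((ha)x)=\tau(h(ax))$ shows that this isomorphism intertwines the natural $\cM$-bimodule structures, hence is an isomorphism of $\cA$-bimodules for every $C^*$-subalgebra $\cA\subset\cM$ (see e.g.\ \cite{DPS}). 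Under this identification, any derivation $\delta:\cA\to\cM_*$ corresponds to a derivation $\delta:\cA\to L_1(\cM,\tau)$ with the same norm, and an element implementing the latter, lying in a prescribed subset of $L_1(\cM,\tau)$, is transported to an implementing element for the former lying in the corresponding subset of $\cM_*$.

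The second step is to verify that $E(0,\infty)=L_1(0,\infty)$ satisfies the hypotheses of Corollary~\ref{cormain1}. It is a fully symmetric (in particular strongly symmetric) function space; it is contained in $S_0(0,\infty)$, since $\int_0^\infty\mu(t;x)\,dt<\infty$ forces $\mu(\infty;x)=\lim_{t\to\infty}\mu(t;x)=0$; and the $L_1$-norm has the Fatou property, which is immediate from the monotone convergence theorem, as for an upwards directed net $\{f_\beta\}$ in the positive cone with $\sup_\beta\norm{f_\beta}_1<\infty$ one obtains $f$ with $f_\beta\uparrow f$ and $\norm{f}_1=\sup_\beta\norm{f_\beta}_1$ (equivalently, $L_1(0,\infty)=L_1(0,\infty)^{\times\times}$).

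Applying Corollary~\ref{cormain1} with $E(0,\infty)=L_1(0,\infty)$ then yields an element $a\in\overline{\conv\{\delta(u)u^*\mid u\in\cU(\cA)\}}^{t_\tau}\subset L_1(\cM,\tau)$ with $\norm{a}_{L_1}\le\norm{\delta}_{\cA\to L_1}$ such that $\delta=\delta_a$ on $\cA$; since $L_1$ has the genuine Fatou property (not merely the Levi property), the sharp constant $1$ is obtained. Transporting $a$ back along the isomorphism $L_1(\cM,\tau)\cong\cM_*$ gives the claimed statement.

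I do not anticipate a real obstacle: all the substantive work is already contained in Theorem~\ref{main} and Corollary~\ref{cormain1}, and the present statement is the ``trivial corollary'' flagged in the introduction, recovering the Bunce--Paschke problem solved in \cite{BGM,Pfitzner}. The only mildly delicate point worth writing out carefully is that the identification $\cM_*\cong L_1(\cM,\tau)$ is an isomorphism of bimodules and not merely of Banach spaces, so that the property of being \emph{inner}, together with the location of the implementing element in the $t_\tau$-closed convex hull of $\{\delta(u)u^*\mid u\in\cU(\cA)\}$, carries over verbatim.
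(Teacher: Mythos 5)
Your proposal is correct and follows exactly the route the paper intends: the corollary is stated there as an immediate specialization of Corollary~\ref{cormain1} to $E(0,\infty)=L_1(0,\infty)$, using the standard bimodule identification $\cM_*\cong L_1(\cM,\tau)$. Your verifications that $L_1(0,\infty)\subset S_0(0,\infty)$ and that the $L_1$-norm has the Fatou property (giving the sharp constant $1$) are precisely the points needed.
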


The following corollary significantly extends  results from \cite{BHLS,Kaftal_W,Hoover} for derivations into  symmetric ideals.
On the other hand,
it shows that \cite[Theorem 1.3]{BHLS2}
holds true even for $C^*$-subalgebras
rather than von Neumann subalgebras.  %{\color{red}(except the case for $C_0(\cM,\tau)$ because derivations on $C_0(\cM,\tau)$ are not necessarily inner\cite{Sakai98,BHLS}).---why do you need this clarification about $C_0(M,\tau)$? The space $C_0(M,\tau)$ does not have Fatou/Levi property and is not covered by 4.11!}
%Note that the weak-$L_p$-ideal, $1<p<\infty$,  in $B(\cH)$ has the Fatou property.

 \begin{cor}

 Let $\cM$ be a semifinite von Neumann algebra equipped with a semifinite faithful normal trace $\tau$ and let $\cA$ be a $C^*$-subalgebra of $\cM$.
 Let $E(\cM,\tau)$ be an ideal of $C_0(\cM,\tau)$ having strongly symmetric norm and the Levi property.
 Then, every derivation from $\cA$ into $E(\cM,\tau)$ is inner.
 \end{cor}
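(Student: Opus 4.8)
The plan is to derive this statement directly from Theorem \ref{main}; the only point requiring care is to confirm that its hypotheses are satisfied. First I would record the relevant inclusions of spaces: by Section \ref{sec:tauCom} one has $C_0(\cM,\tau)=S_0(\cM,\tau)\cap\cM$, so the assumption that $E(\cM,\tau)$ is an ideal of $C_0(\cM,\tau)$ gives $E(\cM,\tau)\subset C_0(\cM,\tau)\subset S_0(\cM,\tau)$. Hence $E(\cM,\tau)$ is a space of $\tau$-compact operators, which is exactly the standing hypothesis $E(\cM,\tau)\subset S_0(\cM,\tau)$ appearing in Theorem \ref{main}.

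Next I would verify that $\bigl(E(\cM,\tau),\norm{\cdot}_E\bigr)$ is a strongly symmetric space in the sense of Definition \ref{def:symmetric}. The $\mu$-monotonicity --- $\mu(x)\le\mu(y)$ and $y\in E(\cM,\tau)$ imply $x\in E(\cM,\tau)$ and $\norm{x}_E\le\norm{y}_E$ --- follows from the ideal property together with the symmetry of the norm by a standard argument (see e.g. \cite{DP2,DPS}), and likewise for submajorization since the norm is strongly symmetric. Completeness is where the Levi property enters: as in Remark \ref{Levi}, a Levi norm is weak Fatou, hence equivalent to a symmetric norm with the Fatou property, and a symmetric space with the Fatou property is complete; this mechanism is the same as in the function-space case and transfers to the operator setting (cf. \cite[Proposition 2.a.8]{LT2} and \cite[Theorem 54]{DP2}). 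Thus $E(\cM,\tau)$ is a strongly symmetric space of $\tau$-compact operators with the Levi property.

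With these verifications in place, Theorem \ref{main} applies verbatim --- the non-unital case of $\cA$ being reduced to the unital one exactly as in the proof of that theorem (see \cite{BHLS}) --- and yields, for every derivation $\delta:\cA\to E(\cM,\tau)$, an element $a\in\overline{\conv\{\delta(u)u^*\mid u\in\cU(\cA)\}}^{\,t_\tau}\subset E(\cM,\tau)$ with $\norm{a}_E\le c\,\norm{\delta}_{\cA\to E}$ (a constant depending only on $E$) such that $\delta=\delta_a$ on $\cA$. I expect the sole, and minor, obstacle to be this identification step --- making precise that an ``ideal of $C_0(\cM,\tau)$ with a strongly symmetric norm and the Levi property'' is literally an instance of a strongly symmetric space of $\tau$-compact operators with the Levi property; once this is settled, no new fixed-point or structural input beyond Theorem \ref{main} is needed.
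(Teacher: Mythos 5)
Your proposal is correct and follows exactly the route the paper intends: the corollary is stated as an immediate application of Theorem \ref{main}, and your verification that an ideal of $C_0(\cM,\tau)=S_0(\cM,\tau)\cap\cM$ with a strongly symmetric norm and the Levi property is precisely a strongly symmetric space of $\tau$-compact operators satisfying the hypotheses of that theorem is the whole content of the argument. The additional remarks on completeness and on reducing to unital $\cA$ are sound but routine.
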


For a von Neumann algebra $\cM$ equipped with a faithful normal tracial state, $\cM\subset S_0(\cM,\tau)$ and $\cM$ has the   Fatou/Levi property. Therefore, we have the following consequence of Theorem~\ref{main}.
 \begin{cor}\cite[Section 5]{Christensen}\cite{Sinclair_S}
 Let $\cM$ be a von Neumann algebra equipped with a faithful normal tracial state and let $\cA$ be a $C^*$-subalgebra of $\cM$.
 Then every derivation from $\cA$ into $\cM$ is inner.%{\color{red}Let me ask a naive question. Can we  embed continuously  $B(H)$ into some reflexive bimodule? say, Haagerup $L_p$? It is a very ambitious question because if the answer it is, then most likely one can resolve the Kadison derivation problem/similary problem...}
 \end{cor}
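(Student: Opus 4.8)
The plan is to obtain the statement as an immediate instance of Theorem~\ref{main}, applied with $E(\cM,\tau)=\cM$, i.e.\ with the symmetric operator space generated by the function space $L_\infty(0,\infty)$. Note that one cannot appeal to Corollary~\ref{cormain1} here, since its hypothesis requires the underlying \emph{function} space to lie in $S_0(0,\infty)$ while $L_\infty(0,\infty)\not\subset S_0(0,\infty)$; the relevant point is that Theorem~\ref{main} only imposes $E(\cM,\tau)\subset S_0(\cM,\tau)$, and this holds automatically in the present regime by the footnote to that theorem: since $\tau$ is a tracial \emph{state}, $\tau({\bf 1})<\infty$, so $\mu(t;x)=0$ for $t\ge\tau({\bf 1})$ and $\cM=\{x\in S(\cM,\tau):\mu(x)\in L_\infty(0,\infty)\}\subset S_0(\cM,\tau)$. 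Thus finiteness of the trace is precisely what places $\cM$ itself inside the class covered by Theorem~\ref{main}.

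Next I would record the two structural properties of $\cM$ that Theorem~\ref{main} requires. First, $L_\infty(\cM,\tau)=\cM$ equipped with the operator norm is strongly symmetric (indeed fully symmetric): if $x\prec\prec y$ then $\tfrac1t\int_0^t\mu(s;x)\,ds\le\tfrac1t\int_0^t\mu(s;y)\,ds$ for every $t>0$, and letting $t\to 0^+$, using that $\mu(\cdot;x)$ and $\mu(\cdot;y)$ are non-increasing and right-continuous, yields $\|x\|_\infty=\lim_{t\to 0^+}\mu(t;x)\le\lim_{t\to 0^+}\mu(t;y)=\|y\|_\infty$. Second, $\cM$ has the Fatou property: every norm-bounded upward-directed net $\{x_\beta\}\subset\cM^+$ has a least upper bound $x\in\cM^+$ with $x_\beta\uparrow x$ and $\|x\|_\infty=\sup_\beta\|x_\beta\|_\infty$ by monotone completeness of von Neumann algebras (equivalently, the unit ball $B_\cM$ is closed in the local measure topology), as already noted in the remark immediately preceding the statement.

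With these verifications in hand, Theorem~\ref{main} applies verbatim to $E(\cM,\tau)=\cM$ and gives, for every derivation $\delta:\cA\to\cM$, an element $a\in\overline{\conv\{\delta(u)u^*\mid u\in\cU(\cA)\}}^{\,t_\tau}\subset\cM$ with $\|a\|_\infty\le\|\delta\|_{\cA\to\cM}$ such that $\delta=\delta_a$ on $\cA$, which is the theorem of Christensen and of Sinclair--Smith. There is no genuine obstacle beyond this bookkeeping; the only point worth emphasizing is the reduction itself, namely that the $\tau$-compactness hypothesis $\cM\subset S_0(\cM,\tau)$ is a free consequence of $\tau$ being a state — a step that would fail outright for an infinite trace, where $\cM\not\subset S_0(\cM,\tau)$.
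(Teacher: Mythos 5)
Your proposal is correct and follows essentially the same route as the paper: the corollary is obtained by applying Theorem~\ref{main} with $E(\cM,\tau)=\cM$, observing that finiteness of the trace gives $\cM\subset S_0(\cM,\tau)$ and that $\cM$ with the operator norm is a strongly symmetric space with the Fatou/Levi property. Your additional verifications (strong symmetry of $\left\|\cdot\right\|_\infty$ via submajorization and the Fatou property via monotone completeness) are accurate elaborations of what the paper states in the sentence preceding the corollary.
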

 \section{Outer derivations}\label{iffsection}
 In this section, we present several examples of outer derivations with values in a  symmetric space  failing the Levi property.

The study of derivations into ideals of a von Neumann algebra was   initiated Johnson and Parrott\cite{Johnson_P} by showing that
  derivations from an abelian/properly infinite von Neumann subalgebra of $B(\cH)$ into the  algebra $K(\cH)$ of all compact operators on $\cH$ are inner.
  This result may be   well seen as a precursor to Question  \ref{que:1}  in the setting of symmetrically normed ideals in $\cM=B(\cH)$.
  After that, the innerness of derivations into ideals of a von Neumann algebra have been  studied extensively in  \cite{Ber_S_1,Ber_S_2,BHLS,Hoover,BHLS2,Christensen87,Galatan_P,Kaftal_W,Popa_R}.

 Recall that
   the Fatou/Levi property can not be dropped in  Theorem \ref{main}, which has been
 demonstrated in \cite[Example 4.1.8]{Sakai98}, \cite[Theorem 3.8]{BHLS} and \cite[Theorem 4.2.1]{Huang}.
 The following theorem
 demonstrates that
 there exist outer derivations into $E(\cM,\tau)\cap \cM$ whenever  the ideal
   $E(\cM,\tau)\cap\cM$ does not have   the Levi property.

 \begin{theorem}\label{noFatou}
 Let $\cM$ be a non-finite factor equipped with a semifinite faithful normal trace $\tau $and let $E(\cM,\tau)$ be a   symmetric space affiliated with $\cM$ such that  $E(\cM,\tau)\cap \cM $ does not have the Levi property.
 Then, there exist outer derivations from the $C^*$-algebra  $C_0(\cM,\tau)$ into
 $E(\cM,\tau)\cap \cM$.
 \end{theorem}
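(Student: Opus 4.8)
The plan is to manufacture an element $a\in\cM$ implementing a derivation of $C_0(\cM,\tau)$ into $E(\cM,\tau)\cap\cM$ which cannot be implemented by any element of $E(\cM,\tau)\cap\cM$; the extra room needed comes precisely from the failure of the Levi property. First I would put things in normal form. Since $\cM$ is a factor, the set of projections of $E(\cM,\tau)$ is invariant under unitary conjugation, so the carrier $c_E$ is central and hence $c_E=\mathbf{1}$; in particular $\tau(\mathbf{1})=\infty$. Being a non-finite factor, $\cM$ is properly infinite, so $\mathbf{1}\sim\mathbf{1}\oplus\mathbf{1}$ and there is a trace-preserving identification $\cM\cong M_2(\cM)$ under which $C_0(\cM,\tau)$ corresponds to $M_2(C_0(\cM,\tau))$ and $J:=E(\cM,\tau)\cap\cM=(E\cap L_\infty)(\cM,\tau)$ to $M_2(J)$, with $J$ still failing the Levi property. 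Set $F:=(E\cap L_\infty)^{\times\times}(0,\infty)$; as a K\"othe bidual, $F$ has the Fatou property, hence so does $F\cap L_\infty$, which therefore generates a symmetric operator space with the Fatou --- a fortiori the Levi --- property. Since $J^{\times\times}\cap\cM=(F\cap L_\infty)(\cM,\tau)$, the equality $J^{\times\times}\cap\cM=J$ would make $J$ have the Levi property; so I may fix a \emph{bounded} positive element $y\in(J^{\times\times}\cap\cM)\setminus J$.

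Next I would record two facts: (i) $wy\in J$ and $yw\in J$ for every $w\in C_0(\cM,\tau)$; and (ii) $\mathbf{1}\notin J$ (else $J$ would be $\cM$, which has the Levi property). For (i), write $w=u|w|$ and, given $\varepsilon>0$, set $f_\varepsilon:=e^{|w|}(\varepsilon,\infty)$; as $\mu(\infty;w)=0$ this projection is $\tau$-finite and $\norm{|w|(\mathbf{1}-f_\varepsilon)}_\infty\le\varepsilon$. Then $yw=yu|w|f_\varepsilon+yu|w|(\mathbf{1}-f_\varepsilon)$ with $\norm{yu|w|(\mathbf{1}-f_\varepsilon)}_F\le\varepsilon\norm{y}_F$ by the bimodule inequality, while $yu|w|f_\varepsilon$ is bounded with $\tau$-finite right support, hence lies in $(L_1\cap L_\infty)(\cM,\tau)$. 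Thus $yw$ lies in the $\norm{\cdot}_F$-closure $F_0$ of $(L_1\cap L_\infty)(\cM,\tau)$. By \eqref{bidualequ} applied to the symmetric space $E\cap L_\infty$ we have $\norm{z}_F=\norm{z}_{E\cap L_\infty}\ge\norm{z}_E$ on $(L_1\cap L_\infty)(\cM,\tau)$, so $F_0$ lies in the $\norm{\cdot}_E$-closure of $(L_1\cap L_\infty)(\cM,\tau)$, which by \eqref{largerthan1infty} sits inside $E(\cM,\tau)$; since $yw$ is bounded, $yw\in E(\cM,\tau)\cap\cM=J$, and $wy=(yw^*)^*\in J$ similarly (because $w^*\in C_0(\cM,\tau)$ and $J=J^*$).

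Then I would take
\[
a:=\begin{pmatrix}0&0\\ y&0\end{pmatrix}\in M_2(\cM)=\cM,\qquad \delta:=\delta_a|_{C_0(\cM,\tau)},
\]
and for $x=(x_{ij})\in M_2(C_0(\cM,\tau))=C_0(\cM,\tau)$ compute
\[
[a,x]=\begin{pmatrix} -x_{12}y & 0\\ yx_{11}-x_{22}y & yx_{12}\end{pmatrix},
\]
whose entries are finite combinations of products $wy$ and $yw$ with $w\in C_0(\cM,\tau)$, hence lie in $J$ by (i). So $\delta$ is a derivation of $C_0(\cM,\tau)$ with values in $M_2(J)=J=E(\cM,\tau)\cap\cM$. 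To see it is outer, suppose $\delta=\delta_b$ with $b\in J$. Then $a-b$ commutes with $C_0(\cM,\tau)$, which contains all $\tau$-finite projections and is therefore weak$^*$-dense in the factor $\cM$, so $a-b\in\cZ(\cM)=\bC\mathbf{1}$; writing $a-b=\lambda\mathbf{1}$ and comparing $(1,1)$-entries gives $-\lambda\mathbf{1}\in J$, whence $\lambda=0$ by (ii), and then the $(2,1)$-entry gives $y\in J$, contradicting the choice of $y$. This would complete the proof.

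The hard part is Fact (i): the implementing element $a$ lives a priori only in the K\"othe bidual $J^{\times\times}$, not in $J$, so it is not automatic that $\delta_a$ lands back in $J$ rather than merely in $J^{\times\times}$; this is where the failure of the Levi property is genuinely used (through the bounded positive $y\in J^{\times\times}\setminus J$), and it is driven by approximating $\tau$-compact operators by operators of $\tau$-finite range and by the identification of order-continuous parts $F_0\subseteq E(\cM,\tau)$ coming from \eqref{bidualequ}. A secondary, routine point is the matricial stability of $C_0(\cM,\tau)$ and $E(\cM,\tau)\cap\cM$ under $\cM\cong M_2(\cM)$, and the identification of $J^{\times\times}$ with $F(\cM,\tau)$.
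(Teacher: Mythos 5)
Your proof is correct, and it takes a genuinely different route from the paper's. The paper splits into two cases according to whether $E(\cM,\tau)^{\times\times}\subset C_0(\cM,\tau)$ or $E(\cM,\tau)^{\times\times}\supset\cM$: in the first case it uses the ``naked'' commutator $\delta_x$ with $x\in (E^{\times\times}\cap\cM)\setminus E$, and the outerness hinges on $\mathbf 1\notin E^{\times\times}$ to kill the scalar ambiguity $x-x'\in\bC\mathbf 1$; in the second case (which forces $E(\cM,\tau)\cap\cM=C_0(\cM,\tau)$) it outsources the existence of outer derivations to \cite[Theorem 3.8]{BHLS}. Your $2\times 2$ amplification --- available because a non-finite factor is properly infinite, so $\mathbf 1$ halves and $(\cM,\tau)\cong (M_2(\cM),\tau\otimes\mathrm{Tr}_2)$ trace-preservingly --- removes the case distinction entirely: placing $y$ in the $(2,1)$-corner makes the scalar $a-b=\lambda\mathbf 1$ harmless, since its off-diagonal entry vanishes and the $(2,1)$-entry already yields $y=b_{21}\in J$ (your detour through the $(1,1)$-entry and Fact (ii) is in fact redundant). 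In particular your argument reproves, inline, the special case of \cite[Theorem 3.8]{BHLS} that the paper cites, which is a real gain in self-containment. The analytic core is the same in both proofs: extracting a bounded positive $y\in J^{\times\times}\setminus J$ from the failure of the Levi property via the Fatou property of the K\"othe bidual, and showing $C_0(\cM,\tau)\cdot y,\ y\cdot C_0(\cM,\tau)\subset J$ by cutting with the $\tau$-finite spectral projections $e^{|w|}(\varepsilon,\infty)$ and invoking \eqref{bidualequ} on $\cF(\cM,\tau)$ --- your version runs this through $F=J^{\times\times}$ where the paper runs it through $E^{\times\times}$, but the mechanism is identical. The only points worth writing out more carefully are the ``routine'' ones you flag: that membership of a symmetric space depends only on the singular value function (so that $E(\cM,\tau)$ and $E(\cM,\tau)\cap\cM$ really do correspond to $M_2$ of themselves under the halving isomorphism, the corner $p_1$ having the same infinite trace as $\mathbf 1$), and that failure of the Levi property for $J$ does produce a \emph{bounded} element of $J^{\times\times}\setminus J$ because a $\norm{\cdot}_J$-bounded increasing net is $\norm{\cdot}_\infty$-bounded and hence has a supremum in $\cM$.
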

 \begin{proof}
 Since $\cM$ is a factor, it follows that it is either atomless or
is atomic with all atoms having equal trace.
Therefore,
$$E(\cM,\tau)^{\times\times}$$
is a fully symmetric space having the Fatou property (see e.g. \cite{DDP93}, \cite[Chapter IV, Theorem 5.4]{DPS},  \cite[Remarks 5 and 6]{DP2}).
Hence,
 $$ E(\cM,\tau)^{\times\times }\cap \cM,$$
 equipped with the norm  $\norm{x}_{E(\cM,\tau)^{\times\times }\cap \cM} =\max\left\{ \norm{x}_{E^{\times\times}},\norm{x}_\infty \right\}, ~x\in E(\cM,\tau)^{\times\times }\cap \cM$,
 has the Fatou property.
 On the other hand,
 the lack of the Levi property of $ E(\cM,\tau)\cap \cM $  implies that
  % (see e.g. \cite[Theorem 4.1]{DDST} or  \cite[Corollary 6.12]{DPS})
%$$E(\cM,\tau)\cap \cM\subsetneqq E(\cM,\tau)^{\times\times }\cap \cM . $$
%
%Hence,
%
there exists
 a net $(x_i)_{i \in I }$ of positive elements  in $E(\cM,\tau)\cap \cM$ increasing to $0\le x\in \cM$
such that  $\sup\norm{x_i}_{E}<\infty$ but $x\notin E(\cM,\tau)$.
By the Fatou property of  $E(\cM,\tau)^{\times\times}$ and $E(\cM,\tau)\subset E(\cM,\tau)^{\times\times}$\cite[Section 5.3]{DP2},
 we have  $x\in E(\cM,\tau)^{\times \times }\cap \cM$.

 Since $E(\cM,\tau)\cap \cM$ does not have the Levi property, it follows that $E(\cM,\tau)\cap \cM\ne \cM$.
Hence, $E(\cM,\tau)\subset S_0(\cM,\tau)$.
There are two possible cases:

 (1) If $E(\cM,\tau)^{\times\times}\subset  C_0(\cM,\tau)$, then we consider $\delta_x$.

We claim that $\delta_x$ is a non-inner derivation from $ C_0(\cM,\tau) $ into $  E (\cM,\tau)\cap \cM$.
Recall that
$$ \norm{ z}_E\stackrel{\eqref{bidualequ}}{=} \norm{z }_{E^{\times\times }}, ~\forall z\in \cF(\cM,\tau). $$
For every $y\in C_0(\cM,\tau) $, the projections  $e^{|y|}(\varepsilon,\infty)$ and $ e^{|y|}(\varepsilon_1 ,\infty)$ are $\tau$-finite for every $\varepsilon> \varepsilon_1>0$.
Thus, we have
$$\left\|xy e^{|y|}(\varepsilon,\infty)  -xy e^{|y|}(\varepsilon_1 ,\infty)  \right \|_  E \le \left\|
xy e^{|y|}( \varepsilon_1, \varepsilon] \right\|_  E=\left\|
xy e^{|y|}( \varepsilon_1, \varepsilon] \right\|_  {E^{\times\times}} \le \varepsilon \left\|
x \right\|_  {E^{\times\times}} , $$
which shows that $\left(xy e^{|y|}(\frac1n,\infty)\right)_{n\ge 1}$ is a Cauchy sequence in $E(\cM,\tau)$.
On the other hand, $xy e^{|y|}(\frac1n,\infty)\to xy$ in measure, which implies that    $xy\in   E(\cM,\tau)$.
Similarly, $yx\in   E(\cM,\tau)$ and therefore $\delta_x(C_0(\cM,\tau))\subset   E(\cM,\tau)$.
Moreover, $x \in \cM$ and $C_0(\cM,\tau)\subset \cM$ imply that $$\delta_x(C_0(\cM,\tau))\subset E(\cM,\tau)\cap \cM .$$
Finally, if there exists an operator $x'\in   E(\cM,\tau)\cap \cM$ such that $\delta_{x}=\delta _{x'}$, then
$$x-x'\mbox{ commutes with all elements in }C_0(\cM,\tau).$$
%By the definition of $C_0(\cM,\tau)$ (see e.g. \cite[Definition 2.6.8]{LSZ}), it is the $\norm{\cdot}_\infty$-closure of all $z\in \cM$ such that $\tau(l(z))<\infty$.
Noticing that $\cM$ is the closure of $C_0(\cM,\tau)$ in the  weak operator topology (see e.g. \cite[Definition 2.6.8]{LSZ}), we obtain that
$$x-x'\mbox{ commutes with all elements in } \cM .$$
However, $\cM$ is a factor and therefore  $x-x' \in \mathbb{C}\textbf{1}$, which is a contradiction with the assumption that $E(\cM,\tau)^{\times \times }\subset C_0(\cM,\tau)$.

 (2) If $E(\cM,\tau)^{\times \times}\supset \cM$,  %we may choose an element $z\in \cM\setminus (C_0(\cM,\tau)+\mathbb{C}{\bf 1 })\subset E(\cM,\tau)^{\times \times}$.
then, by Lemma \ref{bidualC0}, we have
 $E(\cM,\tau)\cap \cM \supset C_0(\cM,\tau)$.
 Recall that  $E(\cM,\tau)\cap \cM\ne \cM$. We have
  $E(\cM,\tau)\cap \cM = C_0(\cM,\tau)$.
Now, the existence of outer derivations  follows from \cite[Theorem 3.8]{BHLS} immediately.
%
%It is readily verified that
% $\delta_z(C_0(\cM,\tau))\subset C_0(\cM,\tau)$
% and $z+c{\bf 1}\notin C_0(\cM,\tau)$ for any $c\in \mathbb{C}$.
% Since $E(\cM,\tau)\cap \cM\subset C_0(\cM,\tau)$, it follows that
%there exists no element $z'\in E(\cM,\tau)\cap \cM$  such that $\delta_z=\delta_{z'}$.
 \end{proof}

 \begin{cor}\label{out bounded}Let $E(0,\infty)$ be a symmetric function space.
 If $E(0,\infty)\cap L_\infty (0,\infty)$ does not have  the Levi property, then for any infinite  factor $\cM$ equipped with a semifinite faithful normal trace $\tau$, there exists outer derivations from the $C^*$-algebra $C_0(\cM,\tau)$ into $E(\cM,\tau) \cap \cM$.
 \end{cor}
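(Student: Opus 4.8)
The plan is to deduce Corollary~\ref{out bounded} from Theorem~\ref{noFatou}. First I would record the elementary identity
$$E(\cM,\tau)\cap\cM=\{x\in S(\cM,\tau):\ \mu(x)\in E(0,\infty)\cap L_\infty(0,\infty)\},$$
which exhibits $E(\cM,\tau)\cap\cM$, with its natural norm $x\mapsto\max\{\norm{x}_E,\norm{x}_\infty\}$, as the symmetric operator space affiliated with $\cM$ generated by the symmetric function space $E(0,\infty)\cap L_\infty(0,\infty)$. Since an infinite factor is in particular a non-finite factor, Theorem~\ref{noFatou}, applied with the ambient symmetric space taken to be $E(\cM,\tau)$ itself, reduces the corollary to the following transfer statement: \emph{if $E(0,\infty)\cap L_\infty(0,\infty)$ fails the Levi property, then so does $E(\cM,\tau)\cap\cM$.}

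To prove the transfer statement I would take an upwards directed net $(f_\beta)$ of positive functions in $E(0,\infty)\cap L_\infty(0,\infty)$ with $\sup_\beta\max\{\norm{f_\beta}_E,\norm{f_\beta}_\infty\}<\infty$ and $f_\beta\uparrow f$ in $S(0,\infty)$, where $f\notin E(0,\infty)\cap L_\infty(0,\infty)$; as $\sup_\beta\norm{f_\beta}_\infty<\infty$ forces $f\in L_\infty(0,\infty)$, in fact $f\notin E(0,\infty)$. Replacing $f_\beta$, $f$ by $\mu(f_\beta)$, $\mu(f)$ changes none of the relevant norms and keeps the net upwards directed (using $0\le f_\beta\uparrow f\Rightarrow\mu(f_\beta)\uparrow\mu(f)$), so all functions may be assumed decreasing. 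An infinite factor is either atomless or atomic with all atoms of equal trace. In the atomless case $\cM$ carries a trace-preserving normal $*$-isomorphism $\pi$ from $(L_\infty(0,\infty),m)$ onto an abelian von Neumann subalgebra of $\cM$; putting $x_\beta:=\pi(f_\beta)$ and $x:=\pi(f)$ gives $\mu(x_\beta)=f_\beta$, so $\sup_\beta\norm{x_\beta}_{E(\cM,\tau)\cap\cM}<\infty$, while $x_\beta\uparrow x$ in $S(\cM,\tau)$ with $\mu(x)=f\notin E(0,\infty)$ shows $x\notin E(\cM,\tau)\cap\cM$. In the atomic case one first replaces $f_\beta$, $f$ by the step functions $\sum_{n\ge1}\mu(n;f_\beta)\chi_{(n-1,n]}$, $\sum_{n\ge1}\mu(n;f)\chi_{(n-1,n]}$, which still form an upwards directed bounded net because $\sum_{n\ge1}\mu(n;g)\chi_{(n-1,n]}\le\mu(g)$ for every $g$, and one notes that $\sum_{n\ge1}\mu(n;f)\chi_{(n-1,n]}\notin E(0,\infty)$: indeed $\chi_{(0,1]}$ belongs to every nonzero symmetric function space and $\mu(f)\le\norm{f}_\infty\chi_{(0,1]}+\sum_{n\ge1}\mu(n;f)\chi_{(n,n+1]}$, so the step function lying in $E(0,\infty)$ would force $\mu(f)\in E(0,\infty)$. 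One then transports the step functions into $\cM$ through a trace-preserving normal $*$-isomorphism of $\ell_\infty$ with counting measure onto a maximal abelian subalgebra of $\cM$ (after rescaling $\tau$). In both cases the uniqueness of suprema of upwards directed nets in $S(\cM,\tau)$ prevents any element of $E(\cM,\tau)\cap\cM$ from being the supremum of $(x_\beta)$, so $E(\cM,\tau)\cap\cM$ fails the Levi property.

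Theorem~\ref{noFatou} then supplies outer derivations from $C_0(\cM,\tau)$ into $E(\cM,\tau)\cap\cM$, which is the assertion. I anticipate no serious obstacle, the corollary being in essence a restatement of Theorem~\ref{noFatou} at the level of function spaces; the one place calling for more than a citation is the step-function reduction in the atomic case of the transfer statement, and that is routine.
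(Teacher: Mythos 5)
Your proposal is correct and follows the same overall reduction as the paper: both deduce the corollary from Theorem~\ref{noFatou} by showing that the failure of the Levi property passes from $E(0,\infty)\cap L_\infty(0,\infty)$ to $E(\cM,\tau)\cap\cM$. Where you genuinely differ is in how that transfer is proved. The paper invokes Remark~\ref{Levi} to recast the Levi property as the coincidence of the space with its second K\"othe dual, picks a single $0\le x=\mu(x)\in E(0,\infty)^{\times\times}\cap L_\infty(0,\infty)$ with $x\notin E(0,\infty)\cap L_\infty(0,\infty)$, replaces it by the step function $\sum_{n\ge1}\mu(n;x)\chi_{(n-1,n]}$ (which still fails to lie in $E(0,\infty)$, by essentially the same $\chi_{(0,1]}$-plus-shifted-tail comparison you use), realizes that step function as $\mu(y)$ for a single operator $y\in\cM$, and concludes again via the bidual characterization --- no explicit net is ever produced. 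You instead work straight from the definition: you take a norm-bounded increasing net witnessing the failure of Levi in the function space, transport it into $\cM$ through a trace-preserving abelian embedding ($L_\infty(0,\infty)$ in the atomless case, $\ell_\infty$ in the atomic case), and close the argument by noting that any upper bound in $E(\cM,\tau)\cap\cM$ would dominate the $S(\cM,\tau)$-supremum and, by the ideal property, force that supremum into the space. Your route avoids the K\"othe-duality machinery entirely, which is a genuine simplification of the tools used, but pays for it with the atomless/atomic case split and some bookkeeping you correctly flag as the delicate spots: the order continuity of $\mu$ along increasing nets, the behaviour of $\mu(n;f_\beta)$ at possible discontinuity points of $\mu(f)$ when you pass to step functions (fixable by shifting the comparison by one more unit), and the trace-normalisation/dilation issue when the atoms do not have trace $1$ (handled by the boundedness of dilations on symmetric function spaces, a point the paper's own construction of $y$ also glosses over). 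Both arguments are sound.
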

 \begin{proof}

By  Theorem \ref{noFatou}, it suffices to prove that $E(\cM,\tau) \cap \cM$ does not have the Levi property.

 Since $E(0,\infty)\cap L_\infty (0,\infty)$ has no Levi  property, it follows that (see e.g. Remark \ref{Levi})
$$E(0,\infty)\cap L_\infty (0,\infty)\ne \left(E(0,\infty)\cap L_\infty (0,\infty)\right)^{\times \times}=  E(0,\infty)^{\times \times }\cap L_\infty (0,\infty).$$
That is, there exists $0\le x=\mu(x)\in E(0,\infty)^{\times \times }\cap L_\infty (0,\infty)$ such that $x\notin E(0,\infty)\cap L_\infty (0,\infty)$.
Observe that
 $$\sum_{n\ge 1}\mu(n;x)\chi_{(n-1,n]} \le \mu(x) \in E(0,\infty)^{\times \times}\cap L_\infty (0,\infty) .$$
 %Assume that $\cM$ is atomless.
% By the Calkin correspondence\cite[Theorem 2.5.3]{LSZ}, we obtain that
%  $$E(\cM,\tau)\cap L_\infty(\cM,\tau)\ne E(\cM,\tau)^{\times \times }\cap L_\infty (\cM,\tau) $$
% immediately.
 %Assume that $\cM$ is atomic.
There exists $ y\in \cM$ such that $$ \mu(y)=\sum_{n\ge 1}\mu(n;x)\chi_{(n-1,n]} \le \mu(x) \in E(0,\infty)^{\times \times}\cap L_\infty (0,\infty) .$$
 However,
we have
 $$\mu(y)\ge \mu \left(
     \mu(x)  -  \mu(x)\chi_{(0,1]}
     \right) . $$
 %Therefore, $ \mu(x)  -  \mu(x)\chi_{(0,1]}\in E(0,\infty)\cap L_\infty(0,\infty)$.
 Since $\mu(x)\chi_{(0,1]}\in E(0,\infty)\cap L_\infty(0,\infty)$ (see e.g. \cite[p.245]{DP2} or \cite{KPS}), it follows  that
  $ y\notin    E(\cM,\tau)\cap \cM $.
This shows that
 $E(\cM,\tau) \cap \cM$ does not have the Levi property   when $\cM$ a semifinite infinite (atomic or atomless) factor, see Remark \ref{Levi}.
 \end{proof}

Recall that the dilation operator $\sigma_s$, $s>0$, on $S(0,\infty)$ is defined by\cite{KPS,LSZ,LT2}
$$(\sigma_s) f(t) = f\left(\frac{t}{s}\right), \forall f\in S(0,\infty), ~s\in (0,\infty). $$

 For the case of  unbounded  operators,  the example of an outer derivation is more involved.
 The result presented below is perhaps the first example of an outer  derivation into a symmetric space affiliated with a finite von Neumann algebra.
 \begin{example}\label{out01}
 Let $\cM$ be the von Neumann algebra which is the closure of $\oplus_{n\ge 1} \mathbb{M}_{2}$ in the weak operator topology equipped with the trace $\tau:=\oplus_{n\ge 1} \frac{1}{2^{n+1}}{\rm Tr}_2 $, where ${\rm Tr}_2$ is the standard trace of $ \mathbb{M}_{2 }$.
 Let $E(0,1)$
be a symmetric function space without the Levi property.
 Then,  there exist  outer derivations from the $C^*$-algebra $\cA:=\overline{\oplus \mathbb{M}_{2^n}}^{\norm{\cdot}_\infty }$ into $E(\cM,\tau)$.
 \end{example}
 \begin{proof}
The lack of the Levi property of
$E(0,1)$ shows that $E(0,1)\ne E(0,1)^{\times \times }$ (see e.g. Remark~\ref{Levi}),
and
there exists a positive function $a\in E(0,1)^{\times\times}$
but $a\notin E(0,1)$.
Define
$$b:=
\sum_{n\ge 1}\mu\left(\frac{1}{2^{n-1}};a \right)\chi_{[\frac{1}{2^n}, \frac{1}{2^{n-1}})}. $$
We have
$$ b\le a  \le \sum_{n\ge 1}\mu\left(\frac{1}{2^{n}};a\right )\chi_{[\frac{1}{2^n}, \frac{1}{2^{n-1}})}  =\sigma_2(b ). $$
Therefore, $b \in E(0,1)^{\times \times}$ but $b\notin E(0,1)$.

 Now, we define a
   self-adjoint element $y$ by setting
 $$z:=  \sum_{n\ge 1} a_n\left( p_{n1}- p_{n2}\right)  , ~a_n =\mu\left(
 \frac{1}{2^n}; a\right) ,~n\ge 1, $$
 where $p_{n1}$ and $p_{n2} $ are mutually orthogonal non-trivial projections in the $n$-th direct summand $\mathbb{M}_2$, and the series is taken in the measure topology.
 In particular, $\mu(z)= \sigma_2(b)$ and therefore,
  $z\in E(\cM,\tau)^{\times\times}$ but $z\notin E(\cM,\tau)$.

  We claim that $\delta_z(\cA)\subset E(\cM,\tau)_b$, the closure of $ \cM$ in $E(\cM,\tau)$.
  Indeed, since $\delta_z$ has image in $E^{\times\times}(\cM,\tau)$ (generated by $E(0,1)^{\times\times }$), it follows from
  Ringrose's theorem \cite{Ringrose} that $\delta_z$ is bounded from
  $\left(\cA,\norm{\cdot}_\infty \right)$ into $\left(
  E^{\times\times } (\cM,\tau),\norm{\cdot}_{E^{\times \times }}\right)$.
Let $p_n$ be the identity of the $n$-th direct summand $\mathbb{M}_2$.
 For any element $y\in \cA$, we have $y_n:=y \sum_{k=1}^n p_k \to y$
in the uniform norm topology.
 Note that $y_n z,zy_n\in \cM$. % and  $\norm{zy_n}_{E(\cM,\tau) },\norm{y_n z }_{E(\cM,\tau) }\le  \norm{z}_{E(\cM,\tau)^{\times \times } }$.
 Hence, for any $n>m$, we have
\begin{align*}\norm{ \delta_z(y_n)-\delta_z (y_m) }_{E(\cM,\tau) }
&=
\norm{\mu(\delta_z(y_n)-\delta_z (y_m))}_{E(0,1)}\\&
\stackrel{\eqref{bidualequ}}{=}\norm{\mu(\delta_z(y_n)-\delta_z (y_m))}_{E(0,1)^{\times\times}}\\
&=\norm{ \delta_z(y_n)-\delta_z (y_m) }_{E ^{\times\times}(\cM,\tau)}\\
&\le 2 \norm{y_n-y_m}_\infty \norm{z}_{E ^{\times\times}(\cM,\tau)}
 \to 0\mbox{ as $n\to \infty$.}
 \end{align*}
 Therefore, $\left( \delta_z(y_n)\right)_{n\ge 1}$ is a Cauchy sequence in $E(\cM,\tau)$, which implies that $\left( \delta_z(y_n)\right)_{n\ge 1}$ converges to some element in $E(\cM,\tau)$ (in particular,  $\left( \delta_z(y_n)\right)_{n\ge 1}$ is $t_\tau$-convergent). On the other hand, $\delta_z(y_n)\to _{t_\tau}\delta_z(y)$ as $n\to \infty$.
 Therefore,  $\norm{ \delta_z(y_n)_{n\ge 1}-\delta_z(y)}_{E(\cM,\tau)}\to _n 0$.
Hence, $\delta_z(y)\in  E(\cM,\tau)$ for all $y\in \cA$.

% {\color{red}A side-question: let $E$ be a (not strongly)  SYMMETRIC function space. Do we have $E^\times(\cM)=E(\cM)^\times$? Do we have a reference? }
%

 %By \cite[Lemma 30]{DP2}, $\norm{z}_E =\norm{z}_{E^{\times \times} }$ for all $z\in \cM$.
% We obtain that $$yx,xy\in E(\cM,\tau)_b . $$
% Hence, $\delta_x(y)\in E(\cM,\tau)_b\subset E(\cM,\tau)$ for all $y\in \cA$.

 Assume that there exist $z_1\in E(\cM,\tau)$ such that $\delta_{z_1}=\delta_z$, i.e., $z_1-z $ commutes with $\cA$.
 That is, $z_1 -z=\sum_{n\ge 1} b_n p_n$ for some sequence $(b_n)_{n\ge 1}$ of complex numbers, where the series converges  in the measure topology.
 However, \begin{align*}
  E(0,1)\ni \mu(z_1)= \mu\left(z+ \sum_{n\ge 1} b_n p_n\right) &=    \mu\left( \sum_{n\ge 1} a_n\left( p_{n1}- p_{n2}\right)  + \sum_{n\ge 1} b_n p_n\right)\\
   &=    \mu\left( \sum_{n\ge 1} (a_n +b_n) p_{n1}- (a_n-b_n) p_{n2}  \right)\\
   &\ge    \mu\left( \sum_{n\ge 1} \max\{ |a_n +b_n|, |a_n-b_n|\} p_{n1}  \right)\\
   &\ge \mu\left(\sum _ {n\ge 1} a_n p_{n1}\right)= \mu\left(
  \sum _{n\ge 1} a_n p_{n2}\right)=\sigma_{1/2}\mu(z ),
  \end{align*}
which implies that $z\in E(\cM,\tau)$\cite[Theorem II.4.4]{KPS}.
This is a contradiction to the assumption.
 \end{proof}
%
% \begin{cor}\label{out1}
% Let $E(0,1)$ be a symmetric function space without the Fatou property such that $E(0,1)\ne E(0,1)^{\times \times }. $
% Let $\cM$ be the von Neumann algebra which is the closure of $\oplus_{n\ge 1} \mathbb{M}_{2}$ in the weak operator topology equipped with the trace $\tau:=\oplus_{n\ge 1} \frac{1}{2^{n+1}}{\rm Tr}_2 $, where ${\rm Tr}_2$ is the standard trace of $ \mathbb{M}_{2 }$.
%Then, there exist  outer derivation from the $C^*$-algebra $\cA:=\overline{\oplus \mathbb{M}_{2^n}}^{\norm{\cdot}_\infty }$ into $E(\cM,\tau)$.
% \end{cor}
%\begin{proof}By Theorem \ref{out01}, it suffices to show that
%  $E(\cM,\tau)^{\times\times}\ne E(\cM,\tau)$.
%  Indeed, let $x=\mu(x)\in E(0,1)^{\times \times }$ which is not in $E(0,1)$.
%  Define
%  $$y:=\sum_{n\ge 1} \mu\left(\frac{1}{2^{n-1}};x\right) p_n ,$$
%  where $p_n$ is the identity of the $n$-th direct  summand $\mathbb{M}_2$.
%  In particular, we have
%  $$ \sigma_{1/2}\mu(x)\le  \mu(y)\le \mu(x) .$$
% That is, $y\in E(\cM,\tau)^{\times\times}$ but $y\notin E(\cM,\tau)$.
% This completes the proof.
%\end{proof}

%It is well-known that
%a symmetric function space $E(0,\infty)$ has the Fatou property if and only
%$E(0,\infty)=E(0,\infty)^{\times\times}$, see e.g. \cite[Theorem 2.5]{BVL},  \cite{KA} and  \cite[p.30]{LT2}.
The following result is an immediate consequence of Corollaries  \ref{cormain1}, \ref{out bounded} and Example \ref{out01}.
 \begin{corollary}\label{cormain}
 For a given symmetric function space $E(0,\infty)\subset S_0(0,\infty )$, the following two statements are equivalent:
 \begin{enumerate}
   \item for any   von Neumann algebra $\cM$ equipped with a semifinite faithful normal trace $\tau$ and any $C^*$-subalgebra $\cA$ of $\cM$, derivations $\delta:\cA\to E(\cM,\tau)$ are necessarily inner;
   \item $E(0,\infty)$ has the Levi property.
 \end{enumerate}

 \end{corollary}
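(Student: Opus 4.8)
The plan is to read Corollary~\ref{cormain} off the three results already in hand: $(2)\Rightarrow(1)$ is nothing but Corollary~\ref{cormain1}, and $(1)\Rightarrow(2)$ will be proved in contrapositive form, combining Corollary~\ref{out bounded} and Example~\ref{out01} with one short reduction for symmetric function spaces.

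For $(1)\Rightarrow(2)$ I would assume that $E(0,\infty)$ fails the Levi property and construct a single semifinite von Neumann algebra $\cM$, a single $C^*$-subalgebra $\cA$ of $\cM$, and a derivation $\delta:\cA\to E(\cM,\tau)$ that is not inner. Split according to the bounded part $E(0,\infty)\cap L_\infty(0,\infty)$. If $E(0,\infty)\cap L_\infty(0,\infty)$ also fails the Levi property, take $\cM=B(\cH)$ with the standard trace (any infinite factor works); Corollary~\ref{out bounded} then produces a non-inner derivation from $C_0(\cM,\tau)$ into $E(\cM,\tau)\cap\cM\subseteq E(\cM,\tau)$, and the construction behind Theorem~\ref{noFatou} shows it is not implemented by any element of $E(\cM,\tau)$ either: such an element would commute with $C_0(\cM,\tau)$, hence with $\cM$, hence be a scalar by factoriality, and no nonzero scalar multiple of $\mathbf 1$ lies in $E(\cM,\tau)\subset S_0(\cM,\tau)$ when $\tau(\mathbf 1)=\infty$. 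If instead $E(0,\infty)\cap L_\infty(0,\infty)$ has the Levi property, I claim that the restriction $E(0,1)$ (in the sense used in Theorem~\ref{refgate}) must fail the Levi property; granting the claim, Example~\ref{out01} applied to this $E(0,1)$ and to the finite von Neumann algebra $\cM$ of that example (for which $\tau(\mathbf 1)=1$, so that $E(\cM,\tau)$ is exactly the operator space built from $E(0,1)$) supplies the desired outer derivation, and its proof already excludes an implementing element in all of $E(\cM,\tau)$. Either way, statement $(1)$ fails.

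It remains to justify the claim, which is the only point not already proved in earlier sections. Pick an upwards directed net $0\le x_\beta\uparrow x$ in $E(0,\infty)^+$ with $\sup_\beta\|x_\beta\|_E<\infty$ and $x\notin E(0,\infty)$; replacing each $x_\beta$ by $\mu(x_\beta)$ one may assume the $x_\beta$ are decreasing functions with $\mu(x_\beta)\uparrow\mu(x)$ pointwise, and still $\mu(x)\notin E(0,\infty)$. First, $\mu(\infty;x)=0$: otherwise $\mu(\infty;x)=c>0$, and then $\min(\mu(x_\beta),c/2)\uparrow(c/2)\mathbf 1$ in $(E(0,\infty)\cap L_\infty(0,\infty))^+$ with uniformly bounded norm, while $(c/2)\mathbf 1\notin E(0,\infty)\subset S_0(0,\infty)$, contradicting the Levi property of the bounded part. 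Next, $\min(\mu(x_\beta),1)\uparrow\min(\mu(x),1)$ with norm uniformly bounded in $E(0,\infty)\cap L_\infty(0,\infty)$, so the Levi property of the bounded part gives $\min(\mu(x),1)\in E(0,\infty)$; since $\mu(x)\notin E(0,\infty)$, the tail $(\mu(x)-1)_+$ cannot lie in $E(0,\infty)$. Because $\mu(\infty;x)=0$, this tail is supported on a finite interval $(0,t_0)$, and $(\mu(x_\beta)-1)_+\uparrow(\mu(x)-1)_+$ with uniformly bounded $E$-norm, so the symmetric function space on $(0,t_0)$ cut out of $E(0,\infty)$ fails the Levi property. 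Finally the dilation $\sigma_{t_0}$ is an order-preserving isomorphism of $E(0,1)$ onto that space, bounded in both directions on symmetric function spaces, and the Levi property is invariant under such isomorphisms; hence $E(0,1)$ fails the Levi property, as claimed.

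I expect the main obstacle to be precisely this truncation-and-localisation step: one has to see that failure of the Levi property of $E(0,\infty)$, once the bounded part is Levi, is forced by behaviour on a finite interval. Everything else is bookkeeping around Corollaries~\ref{cormain1} and \ref{out bounded} and Example~\ref{out01}, the only delicate point being the verification that the derivations produced there remain non-inner when viewed as maps into the full space $E(\cM,\tau)$ rather than into the smaller targets where they are originally built.
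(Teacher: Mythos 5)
Your proposal is correct and follows exactly the route the paper intends: the paper derives Corollary~\ref{cormain} as an ``immediate consequence'' of Corollary~\ref{cormain1} (for $(2)\Rightarrow(1)$) together with Corollary~\ref{out bounded} and Example~\ref{out01} (for the contrapositive of $(1)\Rightarrow(2)$), which is precisely your case split on whether $E(0,\infty)\cap L_\infty(0,\infty)$ has the Levi property. Your truncation-and-localisation argument (using $\mu(\infty;x)=0$, the Levi property of the bounded part to place $\min(\mu(x),1)$ in $E$, and dilation to pass from $(0,t_0)$ to $(0,1)$) correctly supplies the bridge that the paper leaves implicit, as does your observation that the implementing element produced in Theorem~\ref{noFatou} is excluded from all of $E(\cM,\tau)$, not merely from $E(\cM,\tau)\cap\cM$.
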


The following result is an immediate consequence of Corollary  \ref{cormain1} and  Theorem \ref{noFatou}.
 \begin{corollary}

 Let $C_E$ be a symmetric ideal of compact operators in $B(\cH)$.
 Then, the  following two statements are equivalent:
 \begin{enumerate}
   \item for  any  $C^*$-subalgebra $\cA$  of $B(\cH)$, derivations $\delta:\cA\to C_E $ are necessarily inner;
   \item the commutative core $\ell_E$ of $C_E$ has the Levi property.
 \end{enumerate}

 \end{corollary}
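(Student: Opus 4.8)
The plan is to realize this as the purely atomic case of Theorem \ref{main} and Theorem \ref{noFatou}. Work with $\cM=B(\cH)$ and the standard trace $\mathrm{Tr}$ (assume $\dim\cH=\infty$, the finite-dimensional case being trivial by the Kadison--Sakai theorem). Then $\cM$ is a non-finite factor which is atomic with all minimal projections of equal trace, $S(B(\cH),\mathrm{Tr})=B(\cH)$, and $S_0(B(\cH),\mathrm{Tr})=C_0(B(\cH),\mathrm{Tr})=K(\cH)$. The map $x\mapsto\mu(x)$ is the usual bijection between symmetric ideals $C_E\subset K(\cH)$ and symmetric sequence spaces $\ell_E\subset c_0$; the commutative core $\ell_E$ carries the norm obtained by restricting $\|\cdot\|_{C_E}$ to diagonal operators, and $C_E$ is itself a symmetric space of $\tau$-compact operators affiliated with $\cM$. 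Thus, both Theorem \ref{main} and Theorem \ref{noFatou} are literally applicable to $E(\cM,\tau):=C_E$.

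The bridging step I would establish first is that $C_E$ has the Levi property if and only if $\ell_E$ does. One direction is immediate, since an upwards directed norm-bounded net of positive \emph{diagonal} operators in $C_E$ is exactly such a net in $\ell_E$. For the converse one uses that a positive $x\in B(\cH)$ is unitarily equivalent to the diagonal operator with entries $(\mu(n;x))_{n\ge0}$ together with a standard diagonalization of an increasing net; this is routine but is the one point at which the atomic structure of $B(\cH)$ is genuinely used. Combining this with the sequence-space form of Remark \ref{Levi} and the fact that a bidual K\"othe space is always fully symmetric, the Levi property of $\ell_E$ also lets us renorm $C_E$ by an equivalent norm with the Fatou property, turning $C_E$ into a fully (in particular strongly) symmetric operator space of $\tau$-compact operators with the Fatou property.

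For $(2)\Rightarrow(1)$ I then invoke Theorem \ref{main} with $\cM=B(\cH)$ and $E(\cM,\tau)=C_E$: every derivation $\delta\colon\cA\to C_E$ from a $C^*$-subalgebra $\cA\subset B(\cH)$ is inner, implemented by an element of $\overline{\conv\{\delta(u)u^*\mid u\in\cU(\cA)\}}^{t_\tau}\subset C_E$. (Alternatively, and matching the pattern of Corollary \ref{cormain}, one lifts $\ell_E$ to a symmetric function space $E(0,\infty)$ whose associated operator space on $B(\cH)$ equals $C_E$ and quotes Corollary \ref{cormain1}.) For $(1)\Rightarrow(2)$ I argue by contraposition: if $\ell_E$ fails the Levi property then so does $C_E=C_E\cap B(\cH)=E(\cM,\tau)\cap\cM$ with $E(\cM,\tau):=C_E$, and since $B(\cH)$ is a non-finite factor, Theorem \ref{noFatou} yields an outer derivation from the $C^*$-algebra $C_0(B(\cH),\mathrm{Tr})=K(\cH)$ into $C_E$; as $K(\cH)$ is a $C^*$-subalgebra of $B(\cH)$, statement (1) fails. (Equivalently one may apply Corollary \ref{out bounded} to the function-space lift, noting that its intersection with $L_\infty(0,\infty)$ has commutative core $\ell_E$ and hence also fails the Levi property.)

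The main obstacle is therefore entirely contained in the bridge of the second paragraph --- the equivalence of the Levi property for the operator ideal $C_E$ and for its commutative core $\ell_E$, and the correlative remark that the results of Sections \ref{s:M} and \ref{iffsection}, whose statements allow atomic von Neumann algebras with equal-trace atoms, apply verbatim to $\cM=B(\cH)$. Everything else in the argument is a direct citation of Theorem \ref{main} and Theorem \ref{noFatou} (or of Corollaries \ref{cormain1} and \ref{out bounded}).
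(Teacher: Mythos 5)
Your proposal is correct and follows essentially the same route as the paper, which derives this corollary directly from Corollary \ref{cormain1} (equivalently Theorem \ref{main}) for the implication $(2)\Rightarrow(1)$ and from Theorem \ref{noFatou} applied to the non-finite factor $B(\cH)$ for $(1)\Rightarrow(2)$. The only content you add beyond the paper's one-line citation is the routine bridge between the Levi property of $C_E$ and of its commutative core $\ell_E$ (for which the cleanest argument is that $0\le x_\beta\uparrow x$ implies $\mu(x_\beta)\uparrow\mu(x)$), and this is exactly the verification the paper leaves implicit.
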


\end{document}